\begin{document}
	\newcommand{\bea}{\begin{eqnarray}}
		\newcommand{\eea}{\end{eqnarray}}
	\newcommand{\nn}{\nonumber}
	\newcommand{\bee}{\begin{eqnarray*}}
		\newcommand{\eee}{\end{eqnarray*}}
	\newcommand{\lb}{\label}
	\newcommand{\nii}{\noindent}
	\newcommand{\ii}{\indent}
	\newtheorem{theorem}{Theorem}[section]
	\newtheorem{example}{Example}[section]
	\newtheorem{corollary}{Corollary}[section]
	\newtheorem{definition}{Definition}[section]
	\newtheorem{lemma}{Lemma}[section]
	\newtheorem{remark}{Remark}[section]
	\newtheorem{proposition}{Proposition}[section]
	\numberwithin{equation}{section}
	\renewcommand{\qedsymbol}{\rule{0.7em}{0.7em}}
	\renewcommand{\theequation}{\thesection.\arabic{equation}}
	%\bibpunct[, ]{(}{)}{;}{a}{,}{,}
	\renewcommand\bibfont{\fontsize{10}{12}\selectfont}
	\setlength{\bibsep}{0.0pt}
	%\doublespacing
		\title{\bf Weighted (residual) varentropy  and its applications**}
	
\author{ Shital {\bf Saha}\thanks {Email address: shitalmath@gmail.com,~520MA2012@nitrkl.ac.in} ~and  Suchandan {\bf  Kayal}\thanks {Email address (corresponding author):
		kayals@nitrkl.ac.in,~suchandan.kayal@gmail.com 
			\newline**It has been accepted on \textbf{Journal of Computational and Applied Mathematics}.}
	\\{\it \small Department of Mathematics, National Institute of
		Technology Rourkela, Rourkela-769008, India}}
\date{}
\maketitle
		\begin{center}
Abstract
		\end{center}
		In information theory, it is of recent interest to study variability of the uncertainty measures. In this regard, the concept of varentropy has been introduced and studied by several authors in recent past. In this communication, we study the weighted varentropy and weighted residual varentropy. Several theoretical results of these variability measures such as the effect under monotonic transformations and bounds are investigated. Importance of the weighted residual varentropy over the residual varentropy is presented. Further, we study weighted varentropy for coherent systems and weighted residual varentropy for proportional hazard rate models.  A kernel-based non-parametric estimator for the weighted residual varentropy is also proposed. The estimation method is illustrated using simulated and two real data sets.
    	\\
	    \\
		 \textbf{Keywords:} Weighted varentropy; residual lifetime; monotone transformations; bounds; coherent systems; proportional hazard model; non-parametric estimation.
		 \\
		 \\
		\textbf{MSCs:} 62N05; 60E05; 94A17.

			\section*{Abbreviations}
				\begin{acronym}
					\acro{PMF:}{Probability mass function} 
					\acro{SE:}{Shannon entropy}
					\acro{PDF:}{Probability density function} 
					\acro{WSE:}{ Weighted Shannon entropy}
					\acro{CDF:}{Cumulative distribution function} 
					\acro{WRSE:}{Weighted residual Shannon entropy}
					\acro{VE:}{Varentropy}
					\acro{WVE:}{Weighted varentropy}
					\acro{WRVE:}{Weighted residual varentropy}
					\acro{MRL:}{Mean residual lifetime}
					\acro{VRL:}{Variance residual lifetime}
					\acro{PHR:}{Proportional hazard rate}
			\end{acronym}
			
\section{Introduction}
The concept of entropy, introduced by \cite{shannon1948mathematical} has been widely used in computing chaos and uncertainty of a system. For a discrete type random variable $X$ with probability mass function (PMF) $P(X=x_i)=p_i$, where $i=1,\ldots,n$, the Shannon entropy (SE) is defined as 
\begin{eqnarray}\label{eq1.1}
S(X)=-\sum_{i=1}^{n}p_i\log p_i,
\end{eqnarray}
where `log' denotes the natural logarithm. Equation (\ref{eq1.1}) quantifies the information contained in the random variable $X$. Clearly, $S(X)$ is a function of probabilities of happening of events only.  There are some situations, where it is necessary to consider both these probabilities and some qualitative characteristics of the events in order to quantify uncertainty contained in a random experiment. For example, in a two-handed game, a player needs to think about both the probabilities of different random strategies and the wins corresponding to these strategies. To compute the information contained in a probabilistic experiment  having elementary events characterized both by their probabilities and by some qualitative weights, \cite{guiacsu1971weighted} proposed weighted entropy of a discrete type random variable $X$, which is given by  (see also \cite{guiasu1986grouping})
\begin{eqnarray}\label{eq1.2}
S^{w}(X)=-\sum_{i=1}^{n}w_i p_i\log p_i,
\end{eqnarray}
where $w_i\ge0$ is known as the weight corresponding to the $i$th elementary event. The weight is usually specified by the analyst depending upon the situation.

For a non-negative absolutely continuous random variable $X$ with probability density function (PDF) $f(\cdot)$, the SE and weighted Shannon entropy (WSE) are given by 
\begin{eqnarray}\label{eq1.3}
H(X)=-\int_{0}^{\infty}f(x)\log f(x)dx~\mbox{and}~H^x(X)=-\int_{0}^{\infty}xf(x)\log f(x)dx,
\end{eqnarray}
respectively. In the literature, $H(X)$ and $H^x(X)$ are also known as the differential entropy and weighted differential entropy, respectively. The factor $x$ in the integrand of  $H^x(X)$ given in (\ref{eq1.3}) may be treated as a weight linearly emphasizing the occurrence of the event $\{X = x\}$. It is worthwhile to mention that one may assume a general weight function $w(x)\ge0$ instead of $x$ inside the integration of $H^{x}(X)$ in (\ref{eq1.3}). Note that the differential entropy is a shift-independent measure, whereas the weighted differential entropy is shift-dependent. This property makes $H^x(X)$ a more useful tool than $H(X)$ in some situations. For details about the weighted differential entropy, we refer to \cite{di2007weighted}.

Denote by $IC(X)=-\log f(X)$, the random information content of a non-negative absolutely continuous  random variable $X$ with PDF $f(\cdot).$ Then, $H(X)=E[IC(X)],$ that is, the SE in (\ref{eq1.3}) is the expectation of the information content of  $X.$ We note that the justification of  $IC(X)$ can be given in discrete case, where it represents the number of bits essentially required to represent $X$ by a coding scheme that minimizes average code length (see \cite{shannon1948mathematical}). For continuous case, one may still call $IC(X)$ as information content, even though, the coding interpretation does not hold (see \cite{bobkov2011concentration}). The differential entropy takes values from the extended real line, whereas the SE of a discrete random variable is always non-negative. Some other deficiencies of the differential entropy can be found in \cite{schroeder2004alternative}.
The notion of differential entropy has been widely applied in stochastic modelling and various applied fields. Due to importance of the information content in information theory, probability and statistics, it is of interest to study its behaviour. \cite{bobkov2011concentration} studied that the information content concentrates around the entropy in high dimension when the PDF is log-concave. Sometimes, two random variables have the same SE. The Shannon entropies of the uniform distribution in $(0,1)$ and  the exponential distribution with mean $1/e$ are equal, and they are zero. In this case, the  concept of concentration of information content around entropy is useful for analysis purposes. More concentration of information content around entropy will yield a better result. The concentration can be calculated with the variance of $IC(X)$, which is called varentropy. Recently, the varentropy has attracted the attention of the computer scientists for measuring the dispersion of sources, and has many applications in data compression, for details reader may refer to \cite{kontoyiannis1997second} and \cite{kontoyiannis2013optimal}. Suppose $X$ is a non-negative and absolutely continuous random variable with PDF $f(\cdot)$, then the varentropy is given by (see \cite{fradelizi2016optimal}) 
\begin{eqnarray}\label{eq1.4}
{\it VE}(X)=Var(-\log f(X))=\int_{0}^{\infty}f(x)(\log f(x))^2dx-\bigg(\int_{0}^{\infty}f(x)\log f(x)dx\bigg)^2.
\end{eqnarray}
In a similar way, the varentropy of a discrete random variable $X,$ which assumes values $x_i$, $i=1,\ldots,n$ can be defined as (see \cite{di2021analysis}) 
\begin{eqnarray}\label{eq1.5}
{\it VE}^*(X)
&=&\sum_{i=1}^{n}p_i[\log p_i]^2-\bigg[\sum_{i=1}^{n}p_i\log p_i\bigg]^2,
\end{eqnarray}
where $P(X=x_i)=p_i$, $i=1,\ldots,n.$ From (\ref{eq1.4}), it is clear that the varentropy of $X$ is always non-negative. For uniform distribution, it is zero. The varentropy indicates how the information content is scattered around the entropy. Several researchers studied varentropy and discussed its properties including some characterization results. In this direction, we refer to \cite{bobkov2011concentration},  \cite{arikan2016varentropy}, \cite{fradelizi2016optimal}, \cite{di2021analysis} and \cite{maadani2022varentropy}. In particular, \cite{bobkov2011concentration} demonstrated a concentration property of the information content $IC(X)$ when an $n$-dimensional random vector has log-concave density function. \cite{arikan2016varentropy} established that the sum of the varentropies at the output of the polar transform is less than or equal to the sum of the varentropies at the input, with equality if and only if at least one of the inputs has zero varentropy. \cite{fradelizi2016optimal} provided bound for the varentropy of random vectors with log-concave densities, which is sharper than that proposed by \cite{bobkov2011concentration}. \cite{di2021analysis} introduced residual varentropy and studied its various properties with some applications related to the proportional hazards model and the first-passage times of an Ornstein-Uhlenbeck jump-diffusion process. Recently, \cite{maadani2022varentropy} studied varentropy for order statistics. The authors introduced a new stochastic order based on the varentropy and relationships of it with the other stochastic orders. Besides these, one may also refer to \cite{buono2022varentropy}, \cite{raqab2022varentropy} and \cite{sharma2023varentropy}  for some results on varentropy. 

Unlike differential entropy, the weighted differential entropy given in (\ref{eq1.3}) can be expressed as $H^{x}(X)=E(-X\log f(X)).$ Denote by 
\begin{align}
IC^{w}(X)=-w(X)\log f(X),
\end{align}
for a given choice of weight $w(x)=x.$ We call $IC^{x}(X)$ as weighted random information content. Thus, the weighted differential entropy is the expectation of the weighted information content of $X.$ In statistics, one may think of the weighted information content as the weighted log-likelihood function. In this paper, we introduce weighted varentropy (WVE) of a random variable $X$ and study its properties. We will later observe that the WVE is not affine invariant, whereas, the varentropy is affine invariant. That is, the varentropy is unable to distinguish variability in the information content of a random process in different time points. This drawback can be resolved if we use WVE. Note that the use of WVE is also motivated by the necessity, arising in various communication and transmission problems, of expressing the usefulness of events with a variance measure. Further, an important characteristic of the human visual system is that it can recognize objects in a scale and translation invariant manner. We refer to \cite{wallis1996using} for details. It is a challenging job to achieve this desirable behavior using biologically realistic networks. Indeed, knowing that a device fails to operate, or a neuron fails to release spikes in a given time-interval, yields a relevantly different information from the case when such an event occurs in a different equally wide intervals. In some cases, we are thus led to resort to a shift-dependent information/variance measure that, for instance, assigns different measures to such distributions. The concept of WVE can be employed in such situations for a better result than the usual varentropy.

The paper is organized as follows. In Section $2$, we introduce WVE and discuss some properties. The closed-form expressions of the WVE of some distributions have been obtained. The WVE of an affine transformed random variable has been provided. Various bounds of the WVE have been presented.  In Section $3$, we propose  WRVE and present some results. Various bounds of the WRVE have been obtained. The WRVE has been studied under general monotonic transformations. Further, we present a justification of studying WRVE. A comparative study between the WRVE and WRE has been provided for exponential and uniform distributions.  Section $4$ deals with the WVE of coherent systems. Some bounds are proposed. In Section $5$, we study  WRVE for the  proportional hazard rate models. Section $6$ focuses on a non-parametric estimator of the WRVE. We consider a simulated data set and two real life data sets for the purpose of illustration of the proposed estimation method.  Finally, Section $7$ concludes the paper. 

Throughout the rest of the paper, we assume that random variables are non-negative and absolutely continuous. The derivatives are assumed to exist whenever they are used. The notation `$\prime$'  is used to denote derivative, for example $h'(x)=\frac{d}{dx}h(x)$.

\section{Weighted varentropy}
In this section, we introduce WVE and discuss its various properties.  Let $X$ be a non-negative  absolutely continuous random variable with PDF $f(\cdot)$.  The variance of the weighted information content of $X,$ denoted by $IC^{w}(X)=-w(X)\log f(X)$ with weight function $w(\cdot)$ is defined as 
\begin{eqnarray}\label{eq2.1}
{\it VE}^w(X)=Var[IC^w(X)]
&=& E[(IC^w(X))^2]-[E(IC^w(X))]^2 \nonumber\\
&= & \int_{0}^{\infty}w^2(x)f(x)[\log f(x)]^2dx-\bigg[\int_{0}^{\infty}w(x)f(x)\log f(x)dx\bigg]^2.\nonumber\\
\end{eqnarray}
From the expressions of $H(X)$ in (\ref{eq1.3}) and ${\it VE}(X)$ in (\ref{eq1.4}), it is clear that the SE and varentropy do not depend on the realizations of the random variable $X$, but depend only on its PDF. This does not hold for the WSE and WVE. From (\ref{eq1.3}) and (\ref{eq2.1}), it is easy to observe that the WSE and WVE depend on both the PDF and realizations of the random variable.  For a discrete random variable $X$ taking values $x_{i}$, $i=1,\ldots,n$, the WVE is given by
 \begin{eqnarray}\label{eq2.2}
{\it VE}^{*^w}(X)
&=&\sum_{i=1}^{n}w_i^2p_i[\log p_i]^2-\bigg[\sum_{i=1}^{n}w_i p_i\log p_i\bigg]^2,
\end{eqnarray}
where $w_i\ge 0$ is the weight corresponding to the event $\{X=x_i\}$.   The WVE in (\ref{eq2.2}) measures the variability of the information content supplied by a probabilistic experiment depending both on the probabilities of events and on qualitative (objective or subjective) weights of the possible events. The qualitative weights are usually the weights in physics or as the utilities in decision theory. Even though, assigning a weight to every elementary event is not an easy job to be done, we generally assign larger weight to the events which are more significant or more useful depending on our goal. Indeed, one may assign non-negative numbers (weights) to each probabilistic events, directly proportional to its importance or significance. Based on the values of the weights, the WVE reduces to different variability measures. For example, if we choose the weights to be $1$, then the WVE reduces to the varentropy given in (\ref{eq1.5}). Further, let us consider the weight as the ratio of the probabilities of the events and the information supplied by the events with a scaling factor $\frac{1}{2}$, given by
\begin{eqnarray}\label{eq2.3}
w_{i}=-\frac{p_{i}}{2\log p_{i}}.
\end{eqnarray}
Then, the WVE in (\ref{eq2.2}) reduces to the varextropy of a discrete type random variable $X$, given by (see \cite{vaselabadi2021results})
\begin{eqnarray}
{\it VEx}(X)=\frac{1}{4}\left[\sum_{i=1}^{n}p_{i}^{3}-\left(\sum_{i=1}^{n}p_{i}^{2}\right)^2\right].
\end{eqnarray}
We refer to \cite{guiacsu1971weighted} for an elaborate discussion regarding the importance and some choices of the weights for weighted entropy.  From (\ref{eq1.5}), it can be easily established that the varentropy for a discrete random variable is permutation symmetric with respect to the probabilities $(p_1,\ldots,p_n)$. This property does not hold for the WVE given in (\ref{eq2.2}). Now, we consider an example, illustrating the fact that the WVE is not permutation symmetric with respect to the assigned probabilities. 
\begin{example}\label{ex2.1}
	Consider a discrete type random variable $X\in\{1,2,3\}$ with 
	$$P(X=1)=p,~~P(X=2)=1-p-q,~~\mbox{and}~~P(X=3)=q,$$ 
	where $0\leq q\leq1-p\leq1$. Using (\ref{eq1.2})  and (\ref{eq2.2}), and assuming $w_i=x_i,$ we obtain
	\begin{eqnarray}
		S^x(X)=-x_1p\log p-x_2(1-p-q)\log (1-p-q)-x_3q\log q
	\end{eqnarray}
	and 
	\begin{eqnarray}\label{eq2.4}
		{\it VE}^{*^x}(X)&=&x_1^2p(\log p)^2+x_2^2(1-p-q)(\log (1-p-q))^2+x_3^2q(\log q)^2-[S^x(X)]^2\nonumber\\
		&=&{\it VE}^{*^x}(X;p,q),~\mbox{say},
	\end{eqnarray}
	where $x_1=1,x_2=2$ and $x_3=3.$ Now, consider two different sets of probabilities $\{p=0.5,1-p-q=0.2,q=0.3\}$ and $\{p=0.3,1-p-q=0.2,q=0.5\}$ for two different distributions of $X.$ Here, ${\it VE}^*(X;p,q)=0.13296441046={\it VE}^*(X;q,p)$, that is the varentropies of two different distributions are same. But, their WVEs are not same. Here, ${\it VE}^{*^x}(X;p,q)=1.92508326678$ and ${\it VE}^{*^x}(X;q,p)=0.48838794637$, implying that WVEs are not permutation symmetric.  The surface plots (upward and downward views) of the WVE given in (\ref{eq2.4}) are presented in Figure $1.$
		 	\begin{figure}[h!]
		\begin{center}
			\subfigure[]{\label{c1}\includegraphics[height=1.9in]{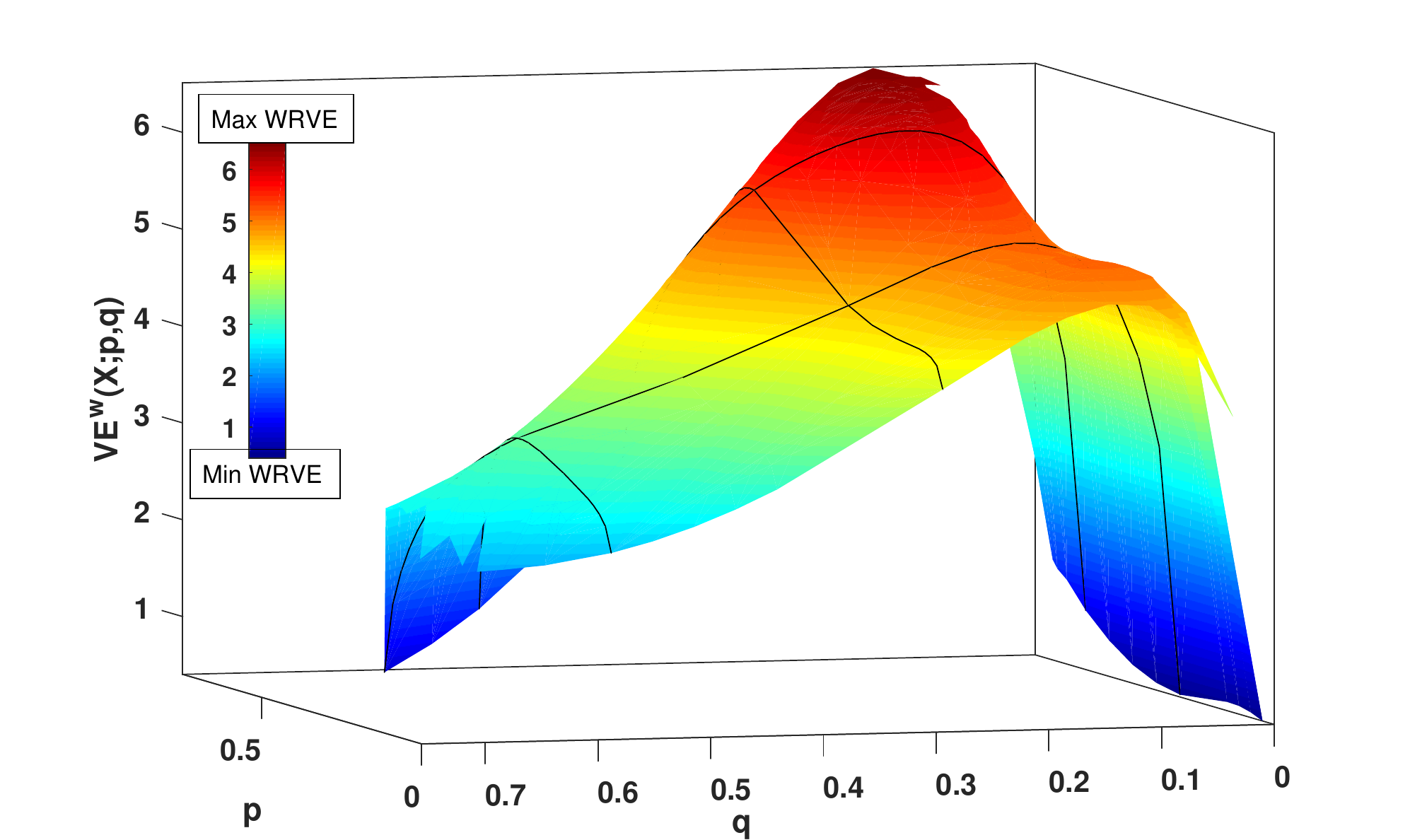}}
			\subfigure[]{\label{c1}\includegraphics[height=1.9in]{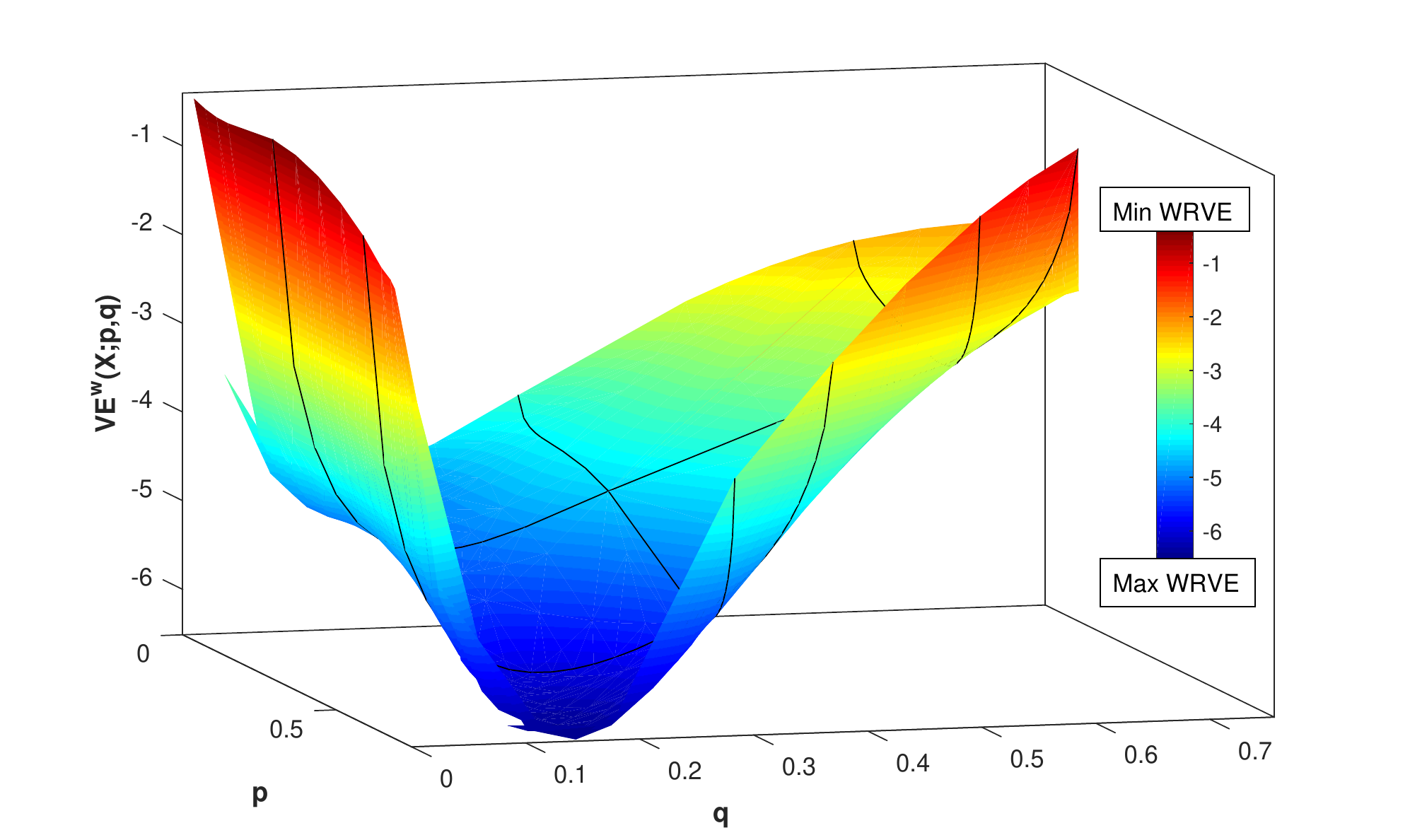}}
			\caption{$(a)$ Upward and $(b)$ downward views of the WVE given in (\ref{eq2.4}) for the distribution in Example \ref{ex2.1}.}
		\end{center}
	\end{figure}
\end{example}
In the next example, we compute WVE given in (\ref{eq2.1}) with $w(x)=x$ for some continuous distributions. 

 \begin{example}\label{ex2.2}~~
	\begin{itemize}
		\item[(i)] Let $X$ follow uniform distribution in the interval $(a,b)$. The WVE is obtained as 
		$$ {\it VE}^x(X)=\frac{1}{12}(a-b)^2[\log (b-a)]^2,$$
		which is depicted in Figure $2(a)$  by keeping the values of $a$ to be fixed and $b$ variable.  Further, note that  for the case of uniformly distributed random variable in the interval $(0,1)$, the WVE is $0.$ From the graph, we observe that when the parameters $a$ and $b$ are close to each other,  as expected, the WVE tends to zero. 
		
		 	\begin{figure}[h!]
			\begin{center}
				\subfigure[]{\label{c1}\includegraphics[height=1.9in]{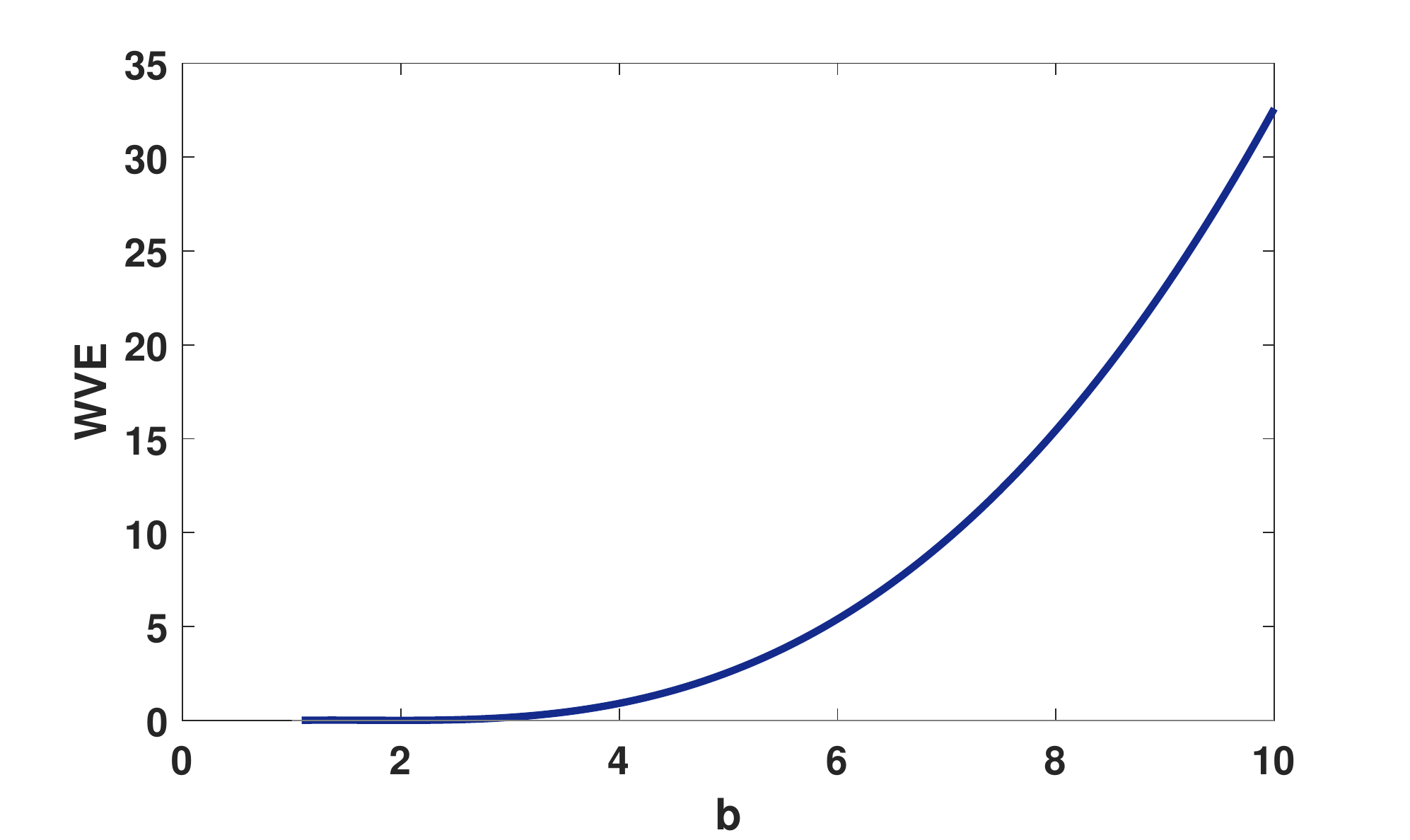}}
				\subfigure[]{\label{c1}\includegraphics[height=1.9in]{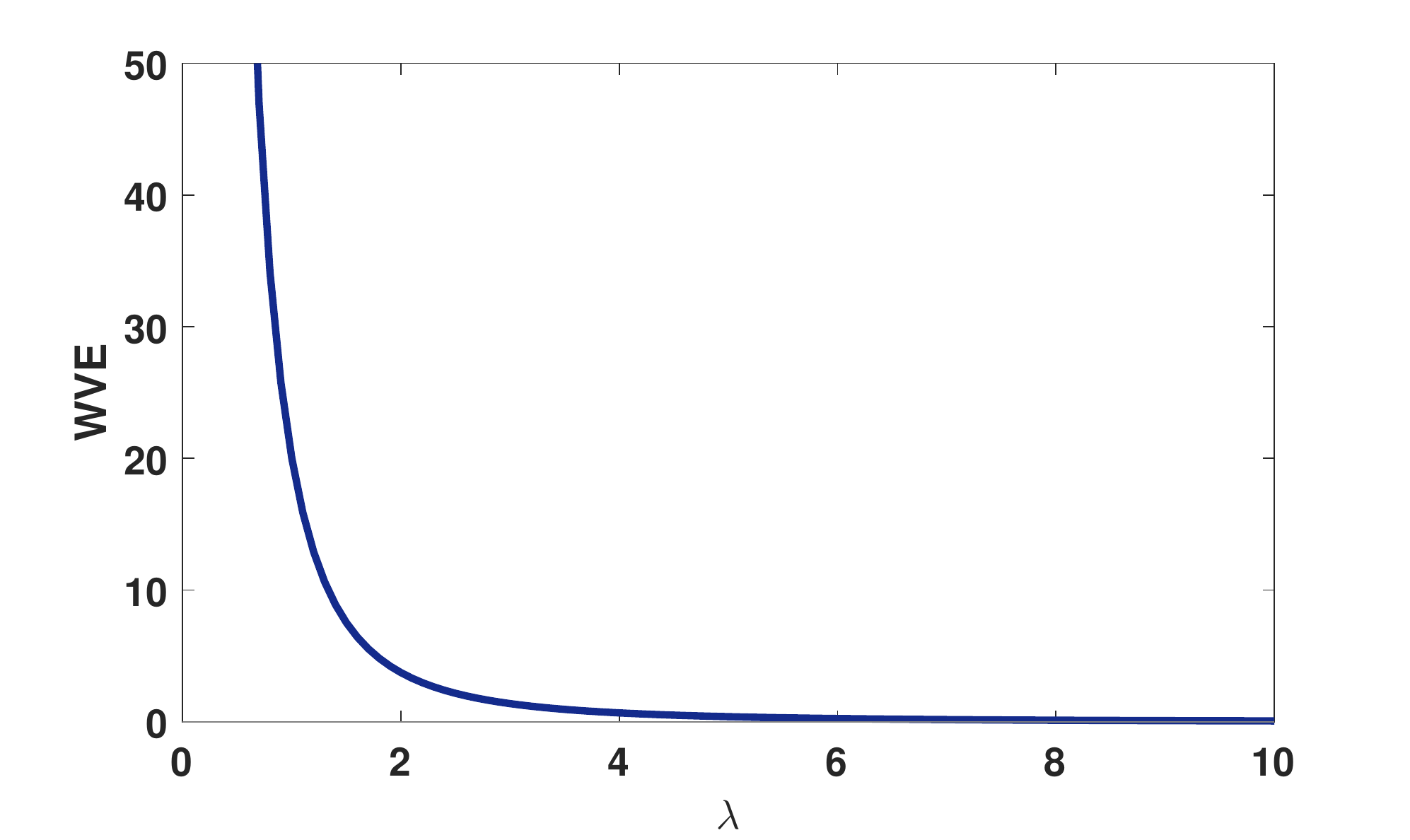}}
				\subfigure[]{\label{c1}\includegraphics[height=1.9in]{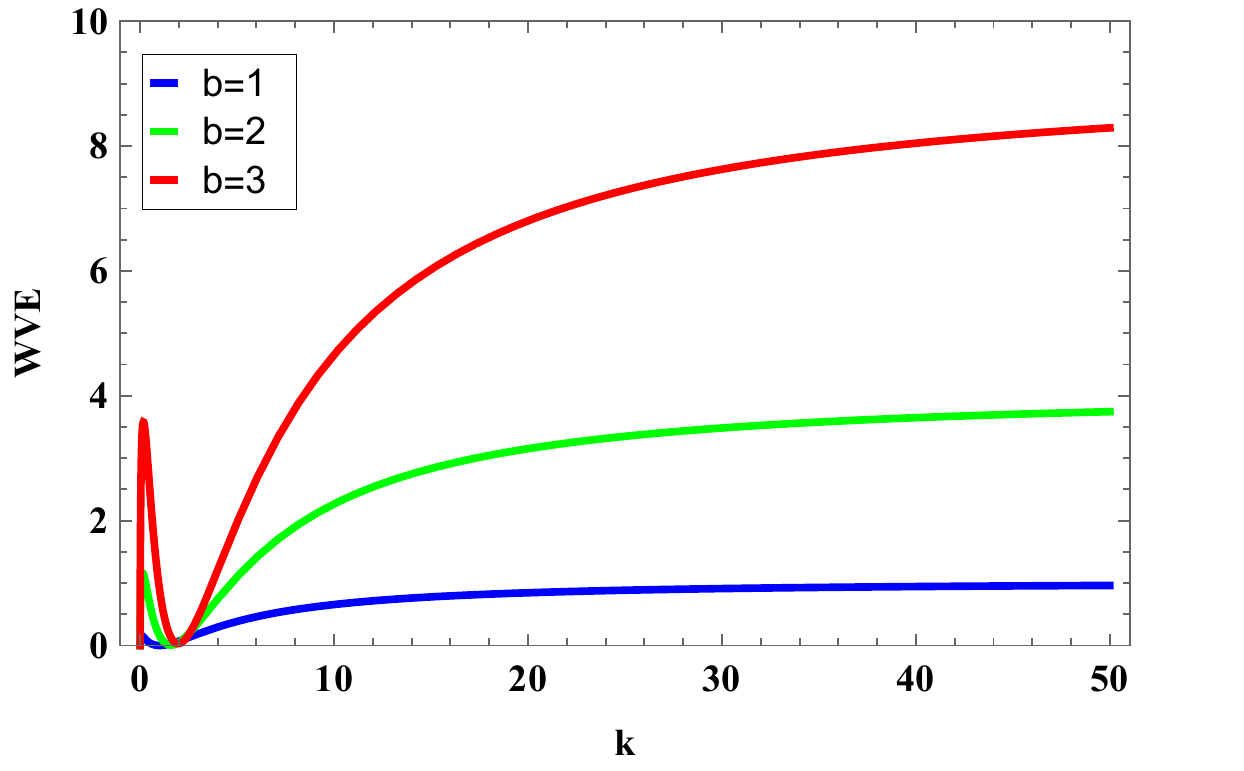}}
				\subfigure[]{\label{c1}\includegraphics[height=1.9in]{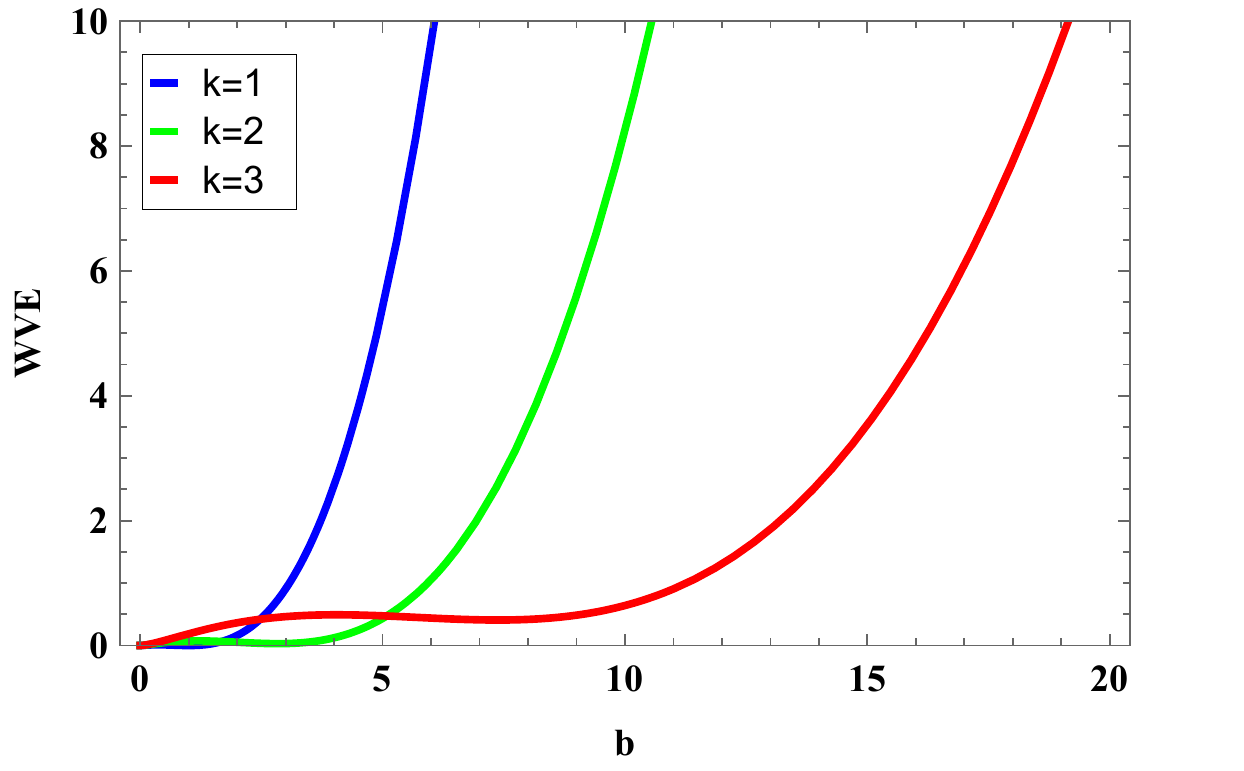}}
				\caption{Graph of the WVE for $(a)$ uniform distribution with respect to $b$ as in Example \ref{ex2.2}$(i)$ for $a=1$, $(b)$  exponential distribution with respect to $\lambda$ as in Example \ref{ex2.2}$(ii)$, $(c)$  the power distribution  with respect to $k$,  for $b=1,2,3$ as in Example \ref{ex2.2}$(iii)$, and $(d)$ the power distribution with respect to $b$ for $k=1,2,3$ as in Example \ref{ex2.2}$(iii)$.}
		\end{center}
		\end{figure}
		
		\item[(ii)] Suppose $X$ follows exponential distribution with positive mean $1/\lambda.$ The WVE is 
		$$ {\it VE}^x(X)=\frac{1}{\lambda^2}[(\log \lambda-4)^2+4],$$
		which is plotted in Figure $2(b)$. From this figure, we observe that WVE is close to zero, when $\lambda$ takes large values.  Here, $ {\it VE}^x(X)\lesssim 0.1201,$ for $\lambda>8$.
		
		\item[(iii)] Let $X$ follow power distribution with CDF $F(x)=(\frac{x}{b})^k,~0<x<b$, $k>0$, and $b>0$. The WVE is obtained as
		{\begin{eqnarray*}
		\it VE^x(X)&=&
		\frac{kb^2}{(k+2)^3}[(k+2)^2(\log(k/b^k))^2+2(k-1)(k+2)\log(k/b^k)\{(k+2)\log(b)\\
		&~&~~~-1\}+(k-1)^2\{(k+2)^2(\log(b))^2-2\big((k+2)\log(b)-1\big)\}]\\
		&~&~~~~+\frac{(kb)^2}{(k+1)^2}[(k+1)\log(k/b^k)+(k-1)\{(k+1)\log (b)-1\}]^2.
\end{eqnarray*}}
	which is plotted in Figures $2(c)$ and $2(d)$ with respect to $k$ and $b$, for different values of $b$ and $k$, respectively. The graphs show that the WVE of the power distribution is not monotone with respect to the model parameters.
	\end{itemize}
\end{example}

It is not always an easy task to get closed-form expression of the WVE of a random variable of interest. In this case, certain transformation can be used to compute WVE of a transformed random variable (new distribution) in terms of the WVE of the known distribution. The following result is useful in this regard.

 \begin{proposition}
	Suppose $X$ is a random variable with PDF $f(\cdot)$ and $Y=aX+b$, with $a>0$ and $b\geq0$. Then, the WVE of $Y$ is obtained as
	\begin{eqnarray*}
	{\it VE}^y(Y)={\it VE}^{w_1}(X)+(a\log a)^2Var(X)+2\log a (H^{w_1^2}(X)-H^{w_1}(X)E[aX+b]),
	\end{eqnarray*}
	where $H^{w_1}(X)$ and ${\it VE}^{w_1}(X)$ are the WSE and WVE of $X$, respectively for a given choice of  $w_{1}(x)=ax+b$.
\end{proposition}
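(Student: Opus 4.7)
The plan is to apply the change of variables $y=ax+b$ directly in the two integrals appearing in the definition of ${\it VE}^{y}(Y)$, and then collect the resulting expressions in $x$ into the quantities ${\it VE}^{w_1}(X)$, $H^{w_1}(X)$, $H^{w_1^{2}}(X)$, and elementary moments of $X$. Since $a>0$, the density of $Y$ is $f_Y(y)=\frac{1}{a}f_X\!\bigl(\frac{y-b}{a}\bigr)$, so $\log f_Y(y)=\log f_X\!\bigl(\frac{y-b}{a}\bigr)-\log a$. Under the substitution $y=ax+b$ (so $dy=a\,dx$), one has $y\,f_Y(y)\,dy=(ax+b)f_X(x)\,dx=w_1(x)f_X(x)\,dx$ and $y^{2}f_Y(y)\,dy=w_1^{2}(x)f_X(x)\,dx$. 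This is the crucial observation: the weight $y$ on the $Y$-side becomes $w_1(x)=ax+b$ on the $X$-side, so $y^{2}$ becomes $w_1^{2}(x)$ rather than $x^{2}$, which is exactly why $H^{w_1^{2}}(X)$ (and not a power of $H^{w_1}(X)$) shows up.

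First I would compute the mean $E[Y\log f_Y(Y)]$. After substitution it equals $\int_{0}^{\infty} w_1(x)f_X(x)[\log f_X(x)-\log a]\,dx=-H^{w_1}(X)-\log a\,E[aX+b]$. Next I would compute the second moment $E[Y^{2}(\log f_Y(Y))^{2}]$; expanding $(\log f_X(x)-\log a)^{2}$ produces three pieces, namely $\int_{0}^{\infty} w_1^{2}(x)f_X(x)[\log f_X(x)]^{2}\,dx$, then $2\log a\cdot H^{w_1^{2}}(X)$ (using the sign in the definition of $H^{w_1^{2}}$), and finally $(\log a)^{2}E[(aX+b)^{2}]$.

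Finally I would assemble ${\it VE}^{y}(Y)=E[Y^{2}(\log f_Y(Y))^{2}]-(E[Y\log f_Y(Y)])^{2}$. The first piece of the second moment combines with $(H^{w_1}(X))^{2}$ coming from the squared mean to give ${\it VE}^{w_1}(X)$. The $(\log a)^{2}$ terms combine via $E[(aX+b)^{2}]-(E[aX+b])^{2}=a^{2}\,Var(X)$ to produce $(a\log a)^{2}\,Var(X)$. The remaining cross terms collect into $2\log a\bigl(H^{w_1^{2}}(X)-H^{w_1}(X)E[aX+b]\bigr)$, yielding the stated identity. No genuine obstacle arises; the only point requiring care is the bookkeeping of signs in the definitions of $H^{w_1}$ and $H^{w_1^{2}}$ when matching the mixed $\log a$ terms, and the recognition that the quadratic weight $w_1^{2}$, rather than some square of $H^{w_1}$, is what naturally emerges from $y^{2}$.
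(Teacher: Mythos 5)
Your proposal is correct and follows essentially the same route as the paper: substitute $y=ax+b$, use $\log f_Y(y)=\log f_X\bigl(\tfrac{y-b}{a}\bigr)-\log a$, expand the first and second weighted moments of the log-density, and regroup into ${\it VE}^{w_1}(X)$, the $2\log a$ cross terms, and $(\log a)^2\bigl(E[(aX+b)^2]-(E[aX+b])^2\bigr)=(a\log a)^2Var(X)$. The sign bookkeeping for $H^{w_1}$ and $H^{w_1^2}$ is handled consistently with the paper's definitions, so there is no gap.
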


\begin{proof}
	Denote by $g(.)$ the PDF of $Y=aX+b$, where  $g(y)=\frac{1}{a}f(\frac{y-b}{a}),~ y>b.$ From (\ref{eq2.1}), we have 
	\begin{eqnarray}\label{eq2.5}
	{\it VE}^y(Y)=\int_{0}^{\infty}y^2g(y)[\log g(y)]^2dy-[H^y(Y)]^2.
	\end{eqnarray}
	Further, 
	\begin{eqnarray}\label{eq2.6}
	H^{y}(Y)=-\int_{0}^{\infty}yg(y)\log g(y)dy&=&-\int_{b}^{\infty}y\frac{1}{a}f\left(\frac{y-b}{a}\right)\log \left(\frac{1}{a}f\left(\frac{y-b}{a}\right)\right)dy\nonumber\\
	&=&-\int_{0}^{\infty}(ax+b)f(x)\{\log f(x)-\log a \}dx\nonumber\\
	&=&H^{w_1}(X)+\log a ~E[aX+b]
	\end{eqnarray}
	and 
	\begin{eqnarray}\label{eq2.7}
	\int_{0}^{\infty}y^2g(y)[\log g(y)]^2dy
	&=&\int_{0}^{\infty}(ax+b)^2f(x)\{\log f(x)-\log a \}^2dx\nonumber\\
	&=&\int_{0}^{\infty}(ax+b)^2f(x)\{\log f(x) \}^2dx+2\log a H^{w_1^2}(X)\nonumber\\
	&~&+(\log a)^2E[(aX+b)^2].
	\end{eqnarray}
	After using (\ref{eq2.6}) and (\ref{eq2.7}) in (\ref{eq2.5}), we get the required result. Thus, the proof is completed. 
\end{proof}

Sometimes, it is not always possible to obtain the closed-form expression of WVE. In such cases, bounds are useful for some ideas about the variability measure.  In the next proposition, we obtain an upper bound of the WVE defined in (\ref{eq2.1}) with weight $w(x)=x$. We note that here, the bound depends on the WSE. 
\begin{proposition}\label{prop2.2}
		Let $X$ be a non-negative and absolutely continuous random variable with PDF $f(\cdot)$ satisfying
		\begin{eqnarray}\label{eq2.10}
		e^{-(\alpha x+\beta )}\leq f(x)\leq1,~~~x>0,
		\end{eqnarray}
		with $\alpha>0$ and $\beta\geq0$. Then,
		$$VE^x(X)\leq  H^{w_{2}}(X),$$
		where $H^{w_{2}}(X)$ is the WSE with weight function $w_{2}(x)=\alpha x^3+\beta x^2.$
	\end{proposition}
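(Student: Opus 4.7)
The plan is to start from the defining expression
\begin{eqnarray*}
VE^x(X)=\int_{0}^{\infty}x^{2}f(x)[\log f(x)]^{2}dx-\left[\int_{0}^{\infty}xf(x)\log f(x)dx\right]^{2},
\end{eqnarray*}
and immediately discard the subtracted square term, since it is non-negative. This reduces the task to bounding the first integral, $\int_{0}^{\infty}x^{2}f(x)[\log f(x)]^{2}dx$, from above by $H^{w_{2}}(X)$.

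The next step is to exploit the hypothesis (\ref{eq2.10}). Taking logarithms of $e^{-(\alpha x+\beta)}\le f(x)\le 1$ yields $0\le -\log f(x)\le \alpha x+\beta$ for every $x>0$. I would then apply the elementary fact that for real numbers $0\le a\le b$ we have $a^{2}\le a\,b$, with the choice $a=-\log f(x)$ and $b=\alpha x+\beta$. This gives the pointwise bound
\begin{eqnarray*}
[\log f(x)]^{2}\;\le\;-(\alpha x+\beta)\log f(x), \qquad x>0.
\end{eqnarray*}

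Multiplying both sides by the non-negative factor $x^{2}f(x)$ and integrating over $(0,\infty)$, the right-hand side becomes
\begin{eqnarray*}
-\int_{0}^{\infty}(\alpha x^{3}+\beta x^{2})\,f(x)\log f(x)\,dx \;=\; H^{w_{2}}(X),
\end{eqnarray*}
with $w_{2}(x)=\alpha x^{3}+\beta x^{2}$, exactly matching the target. Combining this with the initial discarding of the squared term completes the argument.

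There is no real obstacle here; the only subtle point is spotting the trick $a^{2}\le ab$ for $0\le a\le b$, which converts a quadratic-in-$\log f$ quantity into something linear in $\log f$ and thus into a weighted entropy. Everything else is direct substitution, and the hypothesis $\alpha>0$, $\beta\ge 0$ ensures the weight $w_{2}(x)$ is non-negative so that $H^{w_{2}}(X)$ is well defined as a weighted Shannon entropy.
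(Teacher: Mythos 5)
Your proof is correct and follows essentially the same route as the paper: both arguments drop the non-negative squared term $[H^x(X)]^2$ and then use the hypothesis $e^{-(\alpha x+\beta)}\le f(x)\le 1$, i.e.\ $0\le-\log f(x)\le\alpha x+\beta$, to bound $[\log f(x)]^2$ pointwise by $-(\alpha x+\beta)\log f(x)$, which after multiplying by $x^2 f(x)$ and integrating gives $H^{w_2}(X)$. Your write-up simply makes explicit the ``some simplification'' step that the paper leaves implicit.
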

	
	\begin{proof} 
		Under the assumptions made, from (\ref{eq2.1}), after some simplification we obtain
		\begin{eqnarray*}
			VE^w(X)&\leq &-\int_{0}^{\infty}x^2f(x)(\alpha x+\beta)\log f(x)dx\\
			&=& -\int_{0}^{\infty}(\alpha x^3+\beta x^2)f(x)\log f(x)dx\\
			&=& H^{w_{2}}(X).
		\end{eqnarray*}
		Thus, the result follows.
\end{proof}

The following example shows that the Lomax distribution satisfies the condition in Proposition \ref{prop2.2}.

\begin{example}\label{ex2.3}
	Let $X$ follow Lomax distribution with CDF $F(x)=1-(1+\frac{x}{a})^{-b},~x>0,~a>0,~b>0$. Then, it can be established graphically (see Figure $3$) that the condition $e^{-(\alpha x+\beta)}\leq f(x)\leq1,~x>0$ holds for some choices of the values of $\alpha$, $\beta$, $a$, and $b$.
\end{example}

\begin{figure}[htbp!]\label{fig3}
\centering
\includegraphics[width=14cm,height=8cm]{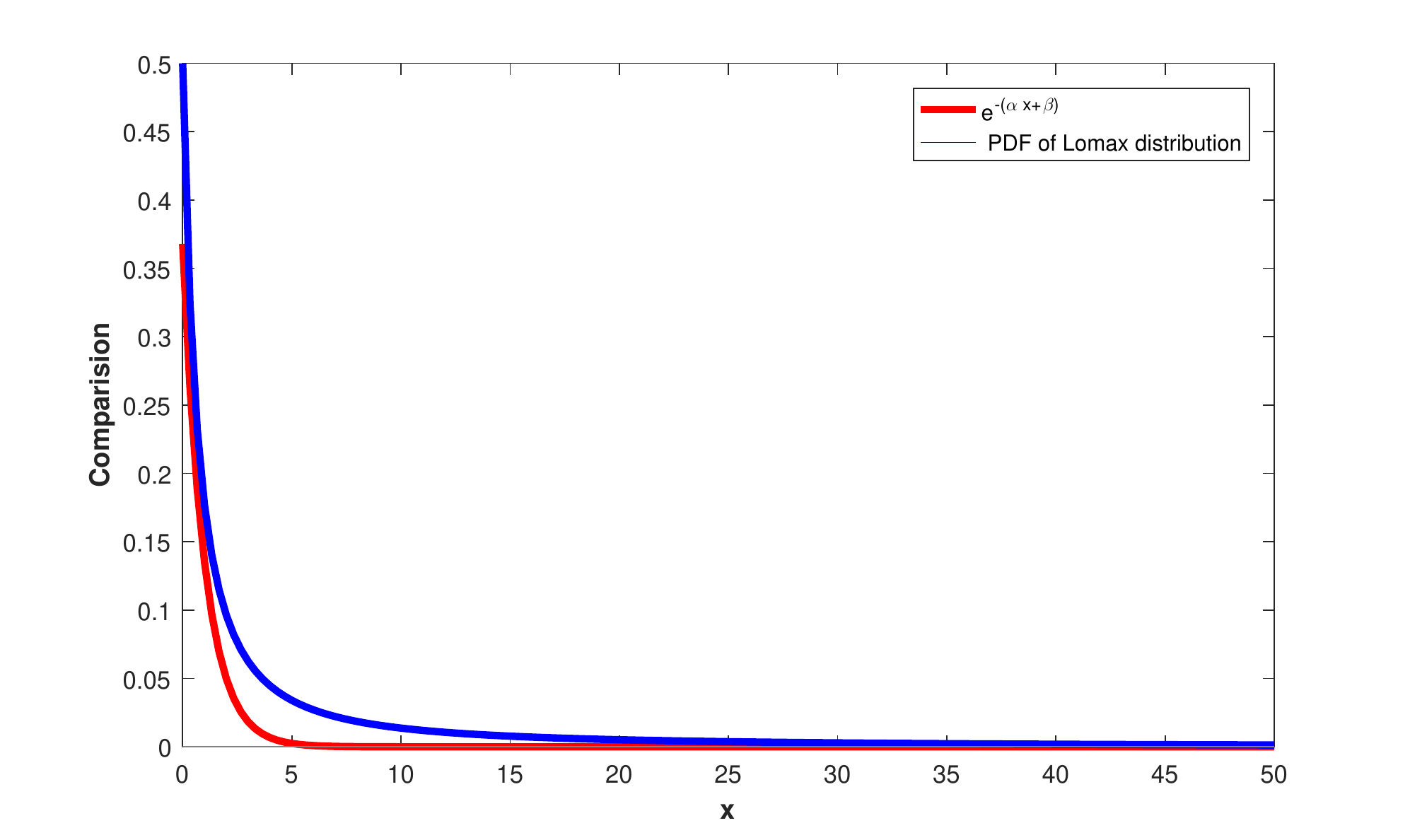}
\caption{Graphs of $e^{-(\alpha x+\beta)}$ for $\alpha=1$, $\beta=0.5$ and the PDF of Lomax distribution with $a=1$ and $b=1$ as in Example \ref{ex2.3}}.
\label{fig:PdfofIGLFRD*}
\end{figure}

 Next, we investigate the WVE under monotonic transformations, and look into its characteristic features. Denote $$\eta_1(x)=\phi(x)\log\phi'(x)~ \mbox{and}~  \eta_2(x)=\phi(x)\log(-\phi'(x)),~x>0.$$
\begin{theorem}\label{th2.2}
	Let $X$ be a non-negative random variable with PDF $f(\cdot)$. Suppose $\phi(\cdot)$  is strictly monotone, continuous, and differentiable function; and $Y=\phi(X)$. Then,
	\begin{equation}\label{eq2.8}
{\it VE}^y(Y)=\left\{
	\begin{array}{ll}
	{\it VE}^{\phi}(X)+Var[\eta_1(X)]-2E[\phi(X)\eta_{1}(X) \log f(X)]
	-2H^{\phi}(X)E[\eta_1(X)],\\\text{if  $\phi$  is  strictly  increasing;}
	\\
	\\
	-{\it VE}^{\phi}(X)-Var[\eta_2(X)\big)]+2E[\phi(X)\eta_2(X) \log f(X)]-2\big(H^{\phi}(X)\big)^2\\
	-2H^{\phi}(X)E[\eta_2(X)]-2\{E[\eta_2(X)]\}^2,
	~ \text{if  $\phi$  is  strictly  decreasing,}
	\end{array}
	\right.
	\end{equation}
	where
	$H^{\phi}(X)$ is the WSE with weight $w(x)=\phi(x)$ and  ${\it VE}^{\phi}(X)$ is given in (\ref{eq2.1}) with $w(x)=\phi(x)$.
\end{theorem}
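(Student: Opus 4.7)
The plan is to reduce ${\it VE}^y(Y)$ to integrals in $x$ via the monotonic change of variables $y=\phi(x)$, and then to identify the resulting pieces with the weighted entropy and weighted varentropy quantities involving $\phi$, $\eta_1$ and $\eta_2$ that already appear in the statement. The whole argument is a one-line substitution followed by careful bookkeeping; no analytic subtlety beyond the monotonicity-justified change of variables is required.

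First I would write down the PDF of $Y=\phi(X)$. When $\phi$ is strictly increasing, $g(y)=f(\phi^{-1}(y))/\phi'(\phi^{-1}(y))$, so with $y=\phi(x)$ one gets $\log g(y)=\log f(x)-\log\phi'(x)$. When $\phi$ is strictly decreasing, the Jacobian contributes $|\phi'(x)|=-\phi'(x)$, and the reversal of the limits of integration is cancelled exactly by the sign of $\phi'(x)$, so $\log g(y)=\log f(x)-\log(-\phi'(x))$. The two cases therefore differ only in whether $\log\phi'$ or $\log(-\phi')$ appears, which is precisely the difference between $\eta_1(x)/\phi(x)$ and $\eta_2(x)/\phi(x)$. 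This is what makes the two branches of (\ref{eq2.8}) structurally parallel.

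Next, starting from
\begin{equation*}
{\it VE}^y(Y)=\int y^2 g(y)(\log g(y))^2\,dy-\Bigl(\int y\, g(y)\log g(y)\,dy\Bigr)^2,
\end{equation*}
I would push both integrals back to $x$ via $dy=|\phi'(x)|\,dx$, which cancels the $|\phi'(x)|$ in $g$ and leaves the clean factor $f(x)\,dx$. In the increasing case the mean term becomes
\begin{equation*}
E[Y\log g(Y)]=\int_0^\infty \phi(x)f(x)[\log f(x)-\log\phi'(x)]\,dx=-H^{\phi}(X)-E[\eta_1(X)],
\end{equation*}
while expanding the square inside the other integral gives
\begin{equation*}
E[Y^2(\log g(Y))^2]=\int_0^\infty \phi^2(x)f(x)(\log f(x))^2\,dx-2E[\phi(X)\eta_1(X)\log f(X)]+E[\eta_1^2(X)].
\end{equation*}
The leading integral equals ${\it VE}^{\phi}(X)+(H^{\phi}(X))^2$ directly from the definition (\ref{eq2.1}) with weight $w(x)=\phi(x)$. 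Substituting this and the squared mean, the $(H^{\phi}(X))^2$ terms cancel and the $(E[\eta_1(X)])^2$ pieces combine with $E[\eta_1^2(X)]$ to form $\mathrm{Var}[\eta_1(X)]$, leaving exactly the first branch of (\ref{eq2.8}). The decreasing branch follows from the same computation with $\eta_2$ in place of $\eta_1$.

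The main obstacle I anticipate is purely combinatorial: keeping track of the several $(H^{\phi}(X))^2$ and $(E[\eta_i(X)])^2$ contributions that enter once from $E[Y^2(\log g(Y))^2]$ and once from $\{E[Y\log g(Y)]\}^2$, and ensuring that their relative signs regroup correctly into $\mathrm{Var}[\eta_i(X)]$ plus the cross term $-2H^{\phi}(X)E[\eta_i(X)]$ rather than cancelling prematurely. Once the mean and second-moment pieces are written out explicitly in terms of $H^\phi$, $\eta_i$ and ${\it VE}^\phi$, the rest is straightforward algebra.
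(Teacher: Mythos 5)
Your treatment of the strictly increasing case is correct and is essentially the paper's own argument: both proceed by the change of variables $y=\phi(x)$ in the two defining integrals of ${\it VE}^y(Y)$, expand $[\log f(x)-\log\phi'(x)]^2$, and regroup the pieces into ${\it VE}^{\phi}(X)$, $Var[\eta_1(X)]$ and the two cross terms; your bookkeeping reproduces the first branch of (\ref{eq2.8}) exactly.

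The gap is in your final sentence. Carrying out ``the same computation with $\eta_2$ in place of $\eta_1$'' yields ${\it VE}^y(Y)={\it VE}^{\phi}(X)+Var[\eta_2(X)]-2E[\phi(X)\eta_2(X)\log f(X)]-2H^{\phi}(X)E[\eta_2(X)]$, which is \emph{not} the second branch of (\ref{eq2.8}): that branch carries opposite signs on ${\it VE}^{\phi}(X)$, $Var[\eta_2(X)]$ and the cross term, plus the extra terms $-2\big(H^{\phi}(X)\big)^2-2\{E[\eta_2(X)]\}^2$. Concretely, writing $A=E[\phi(X)\log f(X)]=-H^{\phi}(X)$ and $B=E[\eta_2(X)]$, the stated decreasing branch collapses to $-E[(\phi(X)\log f(X)-\eta_2(X))^2]-(A-B)^2=-E[(Y\log g(Y))^2]-(E[Y\log g(Y)])^2\le 0$ (with $g$ the PDF of $Y$), whereas your computation gives the nonnegative quantity $E[(Y\log g(Y))^2]-(E[Y\log g(Y)])^2$; the two differ by $2E[(\phi(X)\log f(X)-\eta_2(X))^2]$ and agree only when $Y\log g(Y)=0$ almost surely. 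So, as written, your proposal asserts without verification that its output coincides with the displayed decreasing branch, and it does not---what you have actually derived contradicts the statement (and, since a variance cannot be negative, points to a sign error in that branch of the theorem itself). The paper's proof offers no reconciliation, since it works out only the increasing case and omits the decreasing one ``for brevity.'' To close the argument you must either supply the algebra turning your expression into the stated one (which is impossible except in degenerate cases) or explicitly flag the discrepancy and state the corrected decreasing-case formula.
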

\begin{proof} 
	First, we consider that $\phi(\cdot)$ is strictly increasing. Then, from (\ref{eq2.1}) the WVE of $Y=\phi(X)$ is expressed as
	\begin{eqnarray}\label{eq2.9}
	{\it VE}^y(Y)&=&\int_{0}^{\infty}y^2 f(\phi^{-1}(y))\bigg|\frac{1}{\phi'(\phi^{-1}(y))}\bigg|\bigg[\log f(\phi^{-1}(y))-\log\phi'(\phi^{-1}(y))\bigg]^2dy\nonumber\\
	&~& -\Bigg[\int_{0}^{\infty}y f(\phi^{-1}(y))\bigg|\frac{1}{\phi'(\phi^{-1}(y))}\bigg|\bigg[\log  f(\phi^{-1}(y))-\log\phi'(\phi^{-1}(y))\bigg]dy\Bigg]^2.\nonumber
	\\
	\end{eqnarray}
	Now, substituting $y=\phi(x)$ in the right hand side of (\ref{eq2.9}), and then after some simplification, the desired result follows. The proof when $\phi(\cdot)$ is strictly decreasing is similar, and thus  it is omitted for brevity.				  
\end{proof}

The following remark which is immediate from Theorem \ref{th2.2}, provides the effect of WVE under the scale and location transformations.
\begin{remark}~
	\begin{itemize}
		\item[(a)] Let $Y=aX,~a>0$ be the scale transformation. Then,
		\begin{eqnarray*}
			{\it VE}^{ax}(aX)&=&a^2\big\{{\it VE}^x(X)+(\log a)^2Var(X)-2\log a\big(H^x(X)E(X)
			\\&~&+E[X^2\log f(X)]\big)\big\}.
		\end{eqnarray*}
		\item[(b)] Let $Y=X+b,~b\ge0$ be the location transformation. Then,
		\begin{eqnarray*}
			{\it VE}^{x+b}(X+b)={\it VE}^x(X)-b{\it VE}(X),
		\end{eqnarray*}
		where ${\it VE}(X)$ is the varentropy of $X$, given by (\ref{eq1.4}).
	\end{itemize}
\end{remark}

\cite{fradelizi2016optimal} proved that for the distributions with log-concave PDF, the varentropy is smaller than $1$. The condition of log-concave density means that the PDF belongs to the class of strong unimodal densities. Further, \cite{di2021analysis} established a similar result for the residual varentropy when a random lifetime X is ILR, i.e. its PDF is log-concave. Recently, \cite{di2021stochastic} obtained the similar results if $X_t$ is IFR (increasing failure rate)
and its PDF is strictly decreasing. Thus, it is natural to investigate if the WVE given by (\ref{eq2.1}) is less than $1$ for the case of a log-concave density function. Next, we consider an example, which establishes that the WVE does not have such upper bound like varentropy and residual varentropy.
\begin{example}
	Consider Weibull distribution with PDF
	\begin{eqnarray}
	f(x)=c x^{c-1} e^{-x^{c}},~~x>0,~c>1,
	\end{eqnarray}
	which is clearly log-concave with respect to $x$ since $\frac{d^{2}\log f(x)}{dx^2}=-\frac{c-1}{x^2}-c(c-1)x^{c-2}<0$, for $c>1.$ For this distribution, we have computed the WVE for $c=2$, which is given by ${\it VE}^w(X)\approx 1.87$, establishing that the WVE is not smaller than $1$ for a log-concave PDF in general.
\end{example}

\section{Weighted residual varentropy}
In this section, we study WVE for the residual lifetime of a system and discuss some properties. The residual lifetime is an important reliability metric of a system, which provides the information related to its remaining lifetime given that it has survived till time $t>0.$ It is denoted by $X_{t}=[X-t|X>t].$ The PDF of $X_{t}$ is given by $f_{t}(x)=\frac{f(x)}{\tilde{F}(t)},~x>t$, where $\tilde{F}(t)=P(X>t).$  Then, the weighted residual varentropy (WRVE) of $X$ is defined as
\begin{eqnarray}\label{eq3.1}
{\it VE}^w(X;t)=Var[IC^w(X_t)]
= \int_{t}^{\infty}\frac{f(x)}{\tilde F(t)}\bigg(w(x)\log\frac{f(x)}{\tilde F(t)}\bigg)^2dx-[H^w(X;t)]^2,
\end{eqnarray}
where  $IC^{w}(X_{t})=-w(X)\log f_{t}(X)$ and $H^{w}(X;t)$ is the weighted residual Shannon entropy (WRSE) with weight $w(x)$, given by
 \begin{align*}
H^{w}(X;t)=\int_{t}^{\infty}w(x)\frac{f(x)}{\tilde F(t)}\log \left(\frac{f(x)}{\tilde F(t)}\right)dx.
\end{align*}
The WRVE in (\ref{eq3.1}) can be further expressed for $w(x)=x$ as
\begin{eqnarray}\label{eq3.2}
{\it VE}^w(X;t)&=& \frac{1}{\tilde F(t)}\int_{t}^{\infty}x^2f(x)[\log f(x)]^2dx-\Lambda^2(t)E[X^2|X>t]-2\Lambda(t)H^{w^*}(X;t)\nonumber\\
&~&-[H^w(X;t)]^2,
\end{eqnarray}
where $\Lambda(t)=-\log \tilde F(t)$ is the cumulative hazard rate function, and $H^{w^*}(X;t)$ is the WRSE with weight  $w^*(x)=x^2$. It can be easily obtained that 
$$\lim\limits_{t\rightarrow0}{\it VE}^w(X;t)={\it VE}^w(X),$$
where ${\it VE}^w(X)$ is the WVE given in (\ref{eq2.1}). Next, we obtain closed-form expressions of the WRVE for some distributions.

\begin{figure}[h!]
	\begin{center}
		\subfigure[]{\label{c1}\includegraphics[height=1.25in]{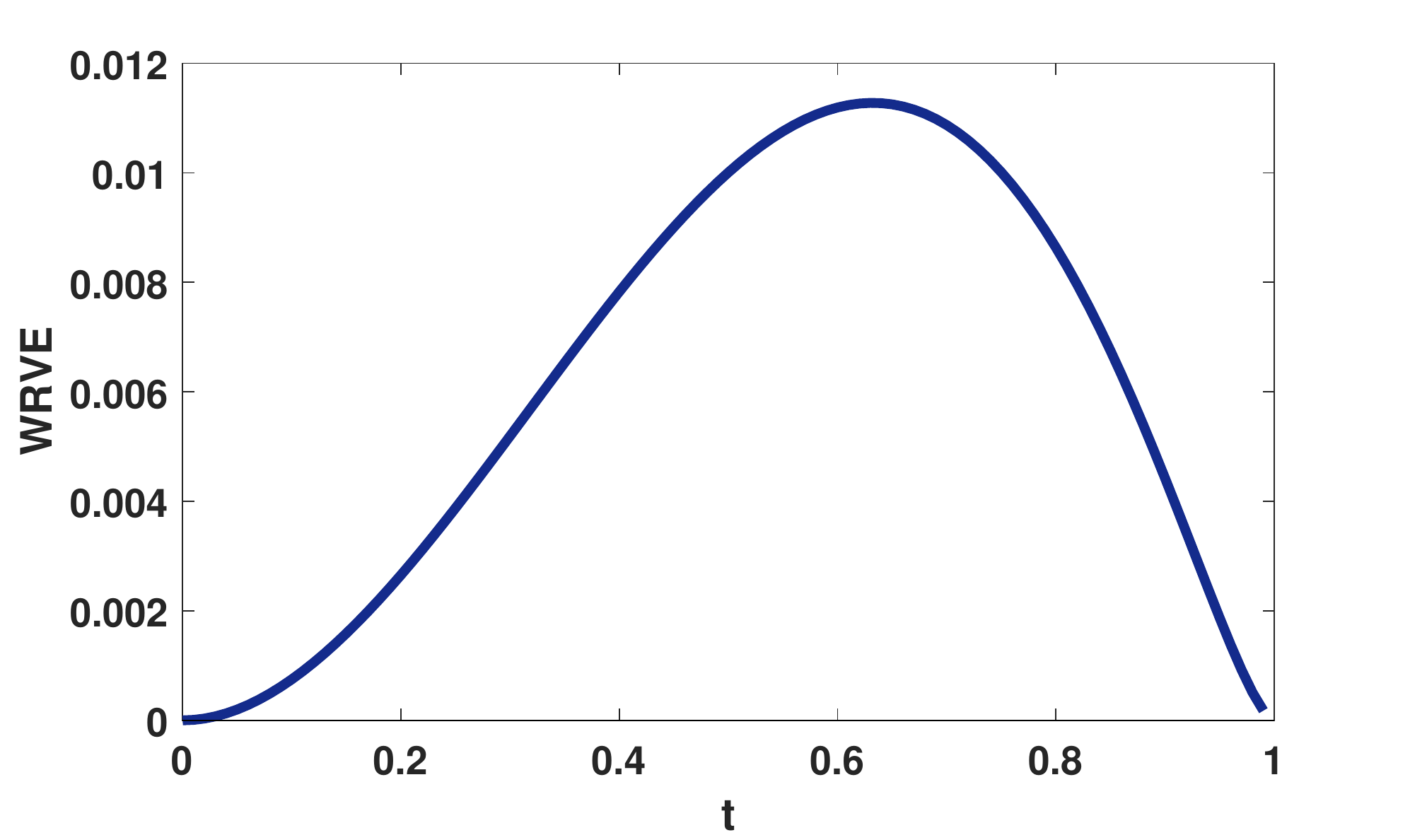}}
		\subfigure[]{\label{c1}\includegraphics[height=1.25in]{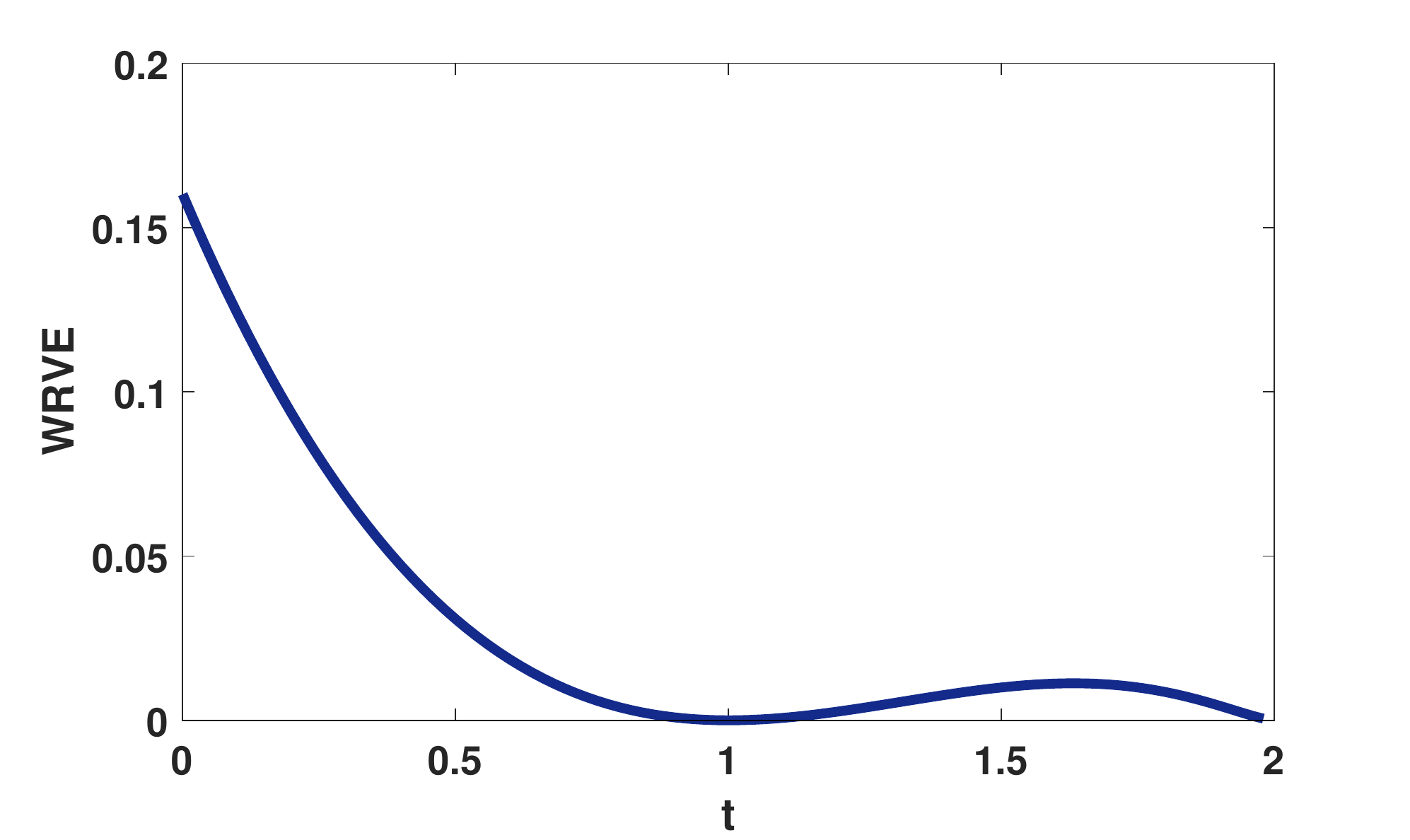}}
		\subfigure[]{\label{c1}\includegraphics[height=1.25in]{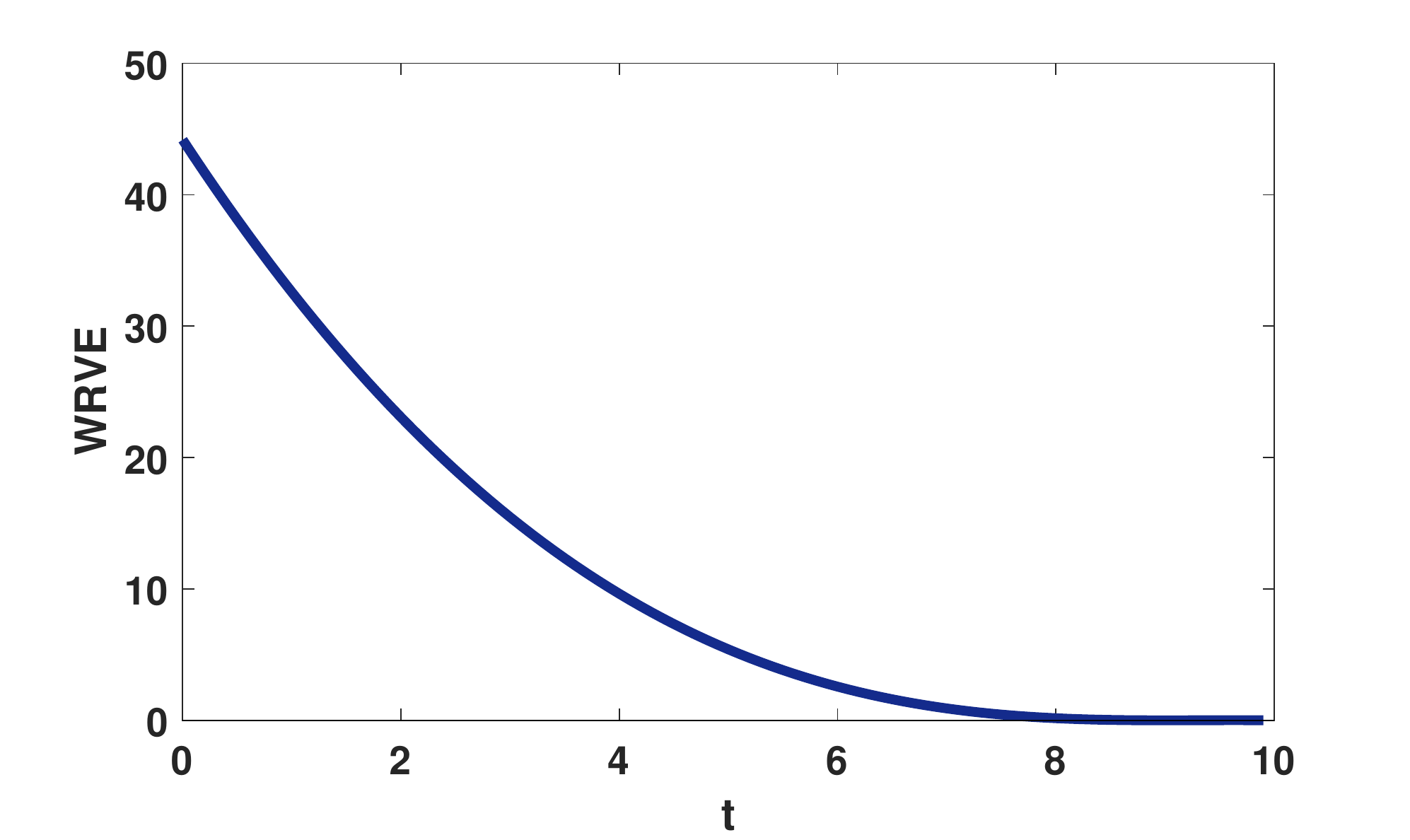}}
		\subfigure[]{\label{c1}\includegraphics[height=1.9in]{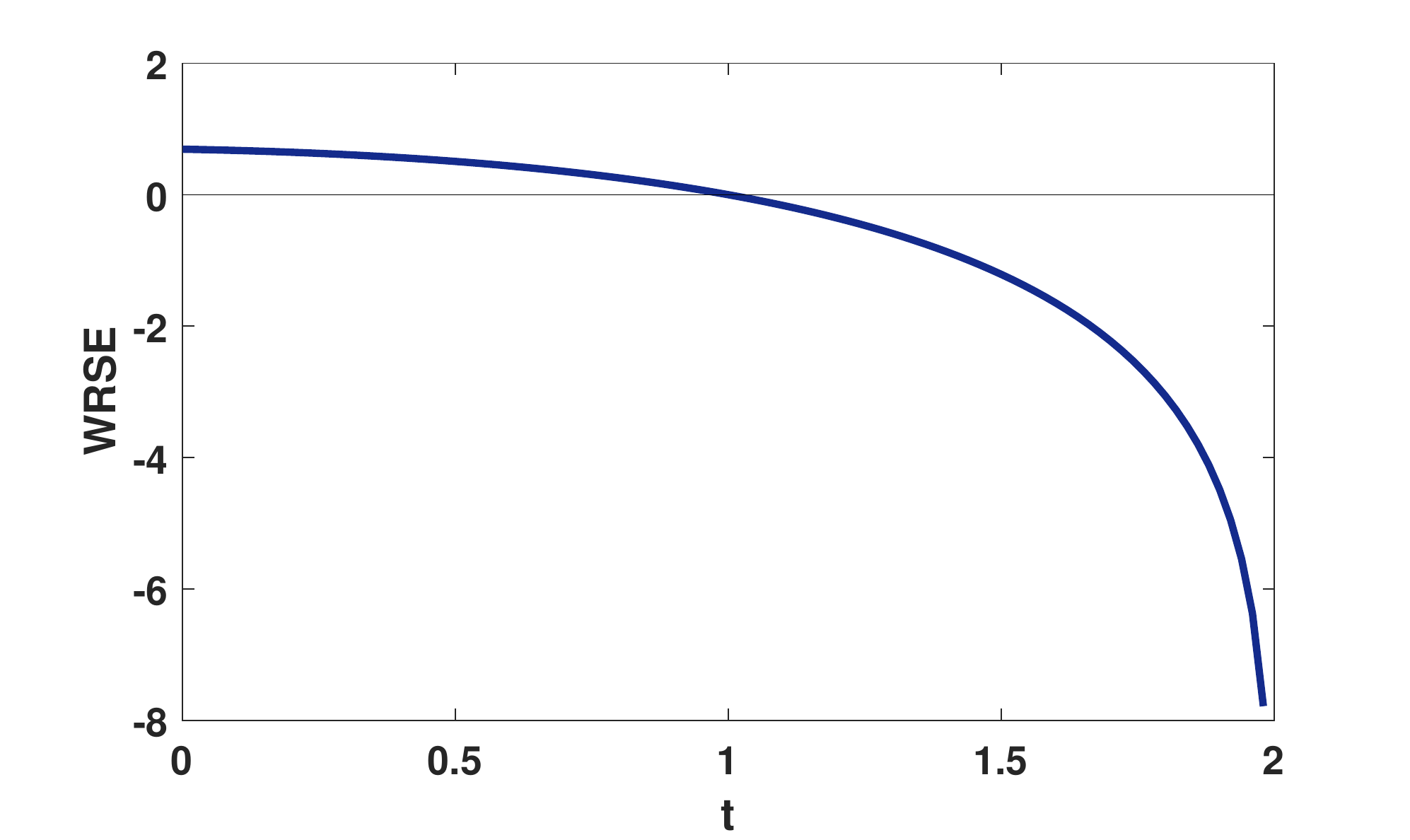}}
		\subfigure[]{\label{c1}\includegraphics[height=1.9in]{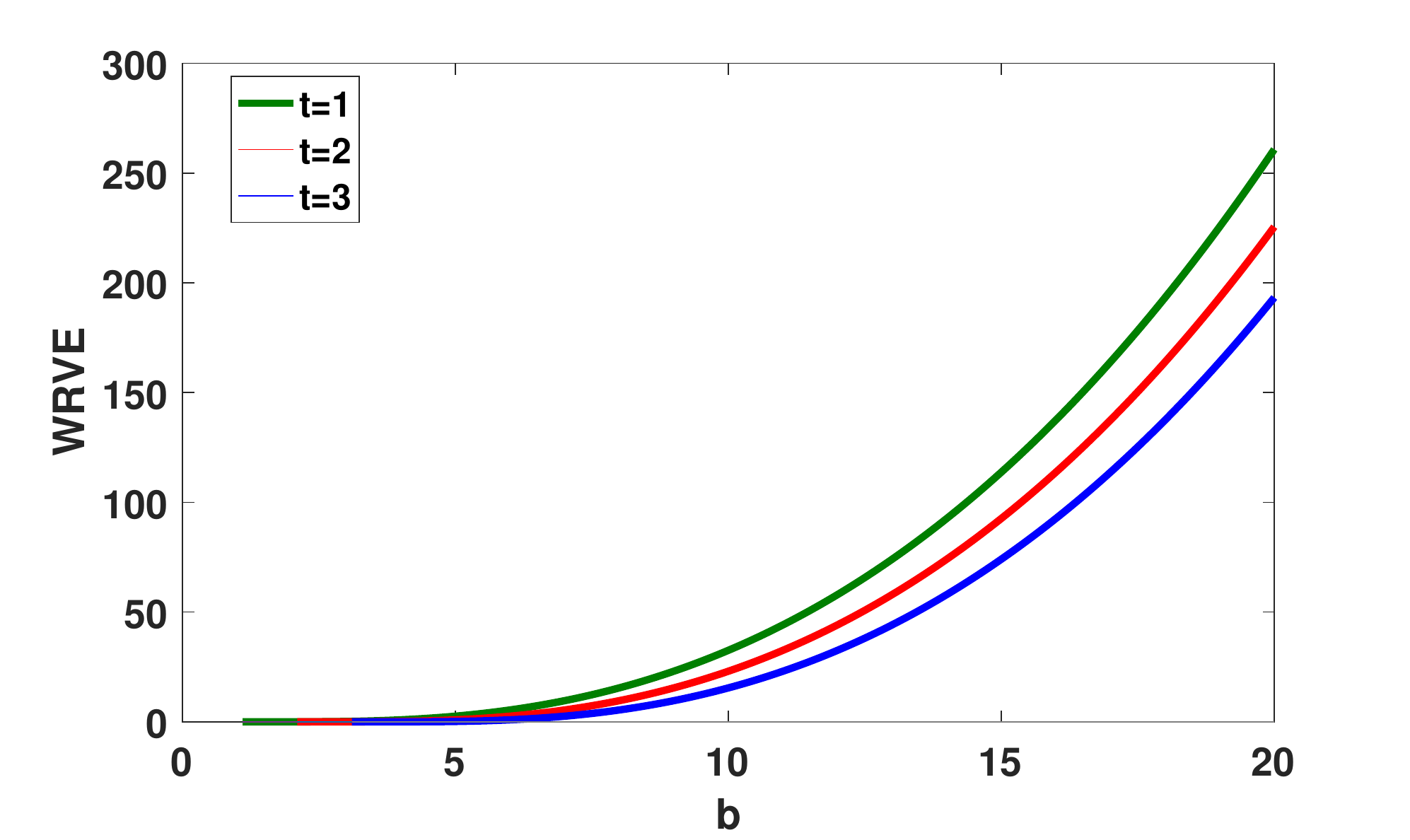}}
		\caption{Graphs of the WRVE with respect to $t$ for uniform distribution in the interval $(0,b)$ as in Example \ref{ex3.1}$(i)$ when $(a)$ $b=1$,  $(b)$  $b=2$, and $(c)$  $b=10$. $(d)$ Graph of the WRSE of uniform distribution with respect to $t$ in  $(0,2)$. $(e)$ Graphs of the WRVE of uniform distribution with respect to $b,$ for different values of $t.$}
	\end{center}
\end{figure}

 \begin{example}\label{ex3.1}~~
	\begin{itemize}
		\item[(i)] Suppose $X$ has uniform distribution in the interval $(0,b)$. Then, the WRVE is 
		$$ {\it VE}^x(X;t)=\frac{1}{12}[(b-t)\log(b-t)]^2, ~b>t>0.$$
		
		\item[(ii)] Let $X$ have exponential distribution with PDF $f(x)=\lambda e^{-\lambda x}, ~\lambda>0,~x>0$. Then, 
		\begin{eqnarray*}
			{\it VE}^x(X;t)&=&\frac{1}{\lambda^2}\big[(\lambda^2t^2+2\lambda t+2)(\log \lambda+\lambda t)^2-2(\lambda^3t^3+3\lambda^2t^2+6\lambda t+6)(\log \lambda+\lambda t)\\
			&~&+(\lambda^4t^4+4\lambda^3t^3+12\lambda^2t^2+24\lambda t+24)-\{(\lambda t+1)\log \lambda-(2+\lambda t)\}^2\big].
		\end{eqnarray*}

		\item[(iii)] Let $X$ be a power random variable with CDF $F(x)=x^k,~0<x<1,$ and $k>0$. Then,  
		\begin{eqnarray*}
			{\it VE}^x(X;t)&=&\frac{k}{(k+2)^3(1-t^k)}[2(k-1)^2-(k-1)^2t^{k+2}\{(k+2)^2(\log t)^2\\
			&~&-2(k+2)\log t\}+2(k-1)(k+2)\psi(k,t)\{t^{k+2}(1-(k+2)\log t)-1\}\\
			&~&+(k+2)^2\left(\psi(k,t)\right)^2(1-t^{k+2})]-\frac{k^2}{(k+1)^2(1-t^k)^2}[(k+1)\psi(k,t)\\
			&~&-t^{k+1}\{(k+1)\psi(k,t)+(k^2-1)\log t-(k-1)\}-(k-1)]^2,
		\end{eqnarray*}
	where $\psi(k,t)=\log(\frac{k}{1-t^k}).$
	\end{itemize}
\end{example}

We present the plots of the WRVE of uniform distribution with respect to $t$ as in Example \ref{ex3.1}$(i)$ in Figures $4$$(a)$, $(b),$ and $(c)$ to observe its  behaviour for different values of $b,$ say, $1$, $2$, and $10,$ respectively. From the figures, it is clear that WRVE of a distribution is not monotone in general. We have also presented graph of the WRSE of the uniform distribution with respect to $t$ in Figure $4(d)$ when $b=2.$ From Figures $4(b)$ and $(d)$, we see that the monotone behaviour of the WRVE and WRSE are not similar. In Figure $4(e)$, we have presented the graphs of the WRVE of the uniform distribution with respect to $b$, for some choices of $t$. Graphs of the WRVE and WRSE of the exponential distribution as in Example \ref{ex3.1}$(ii)$ are presented in Figures $5(a)$ and $(b)$, for different values of $\lambda.$  Figure $5(c)$ represents the plots of the WRVE of exponential distribution with respect to $\lambda,$ for $t=1,5,7.$ In Figures $6(a)$ and $6(b)$, we have plotted the graphs of the WRVE of power distribution with respect to $t$ and $k$, respectively, implying that the WRVE of power distribution is not monotone with respect to $t$ and $k$.

	\begin{figure}[h!]
	\begin{center}
		\subfigure[]{\label{c1}\includegraphics[height=1.26in]{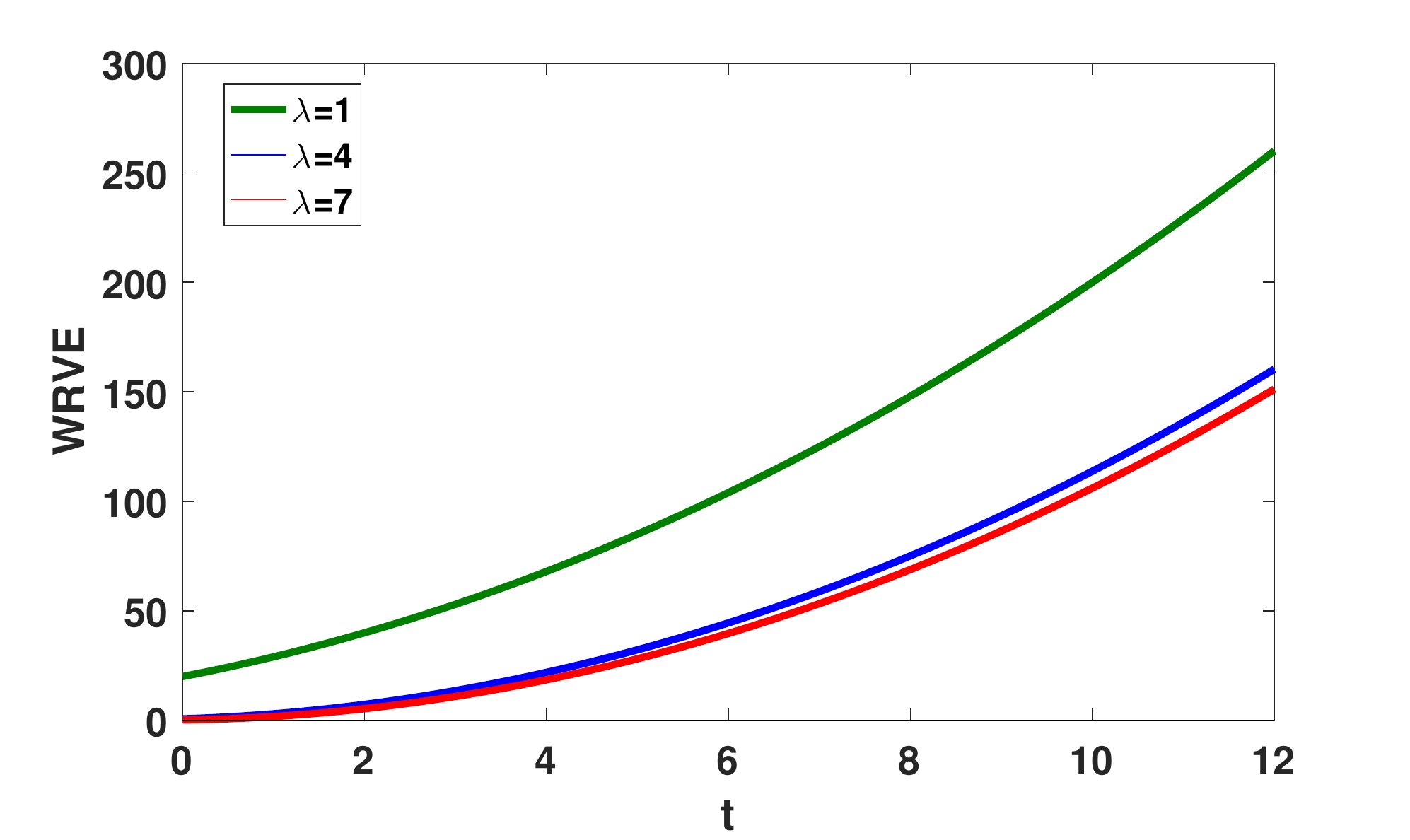}}
		\subfigure[]{\label{c1}\includegraphics[height=1.26in]{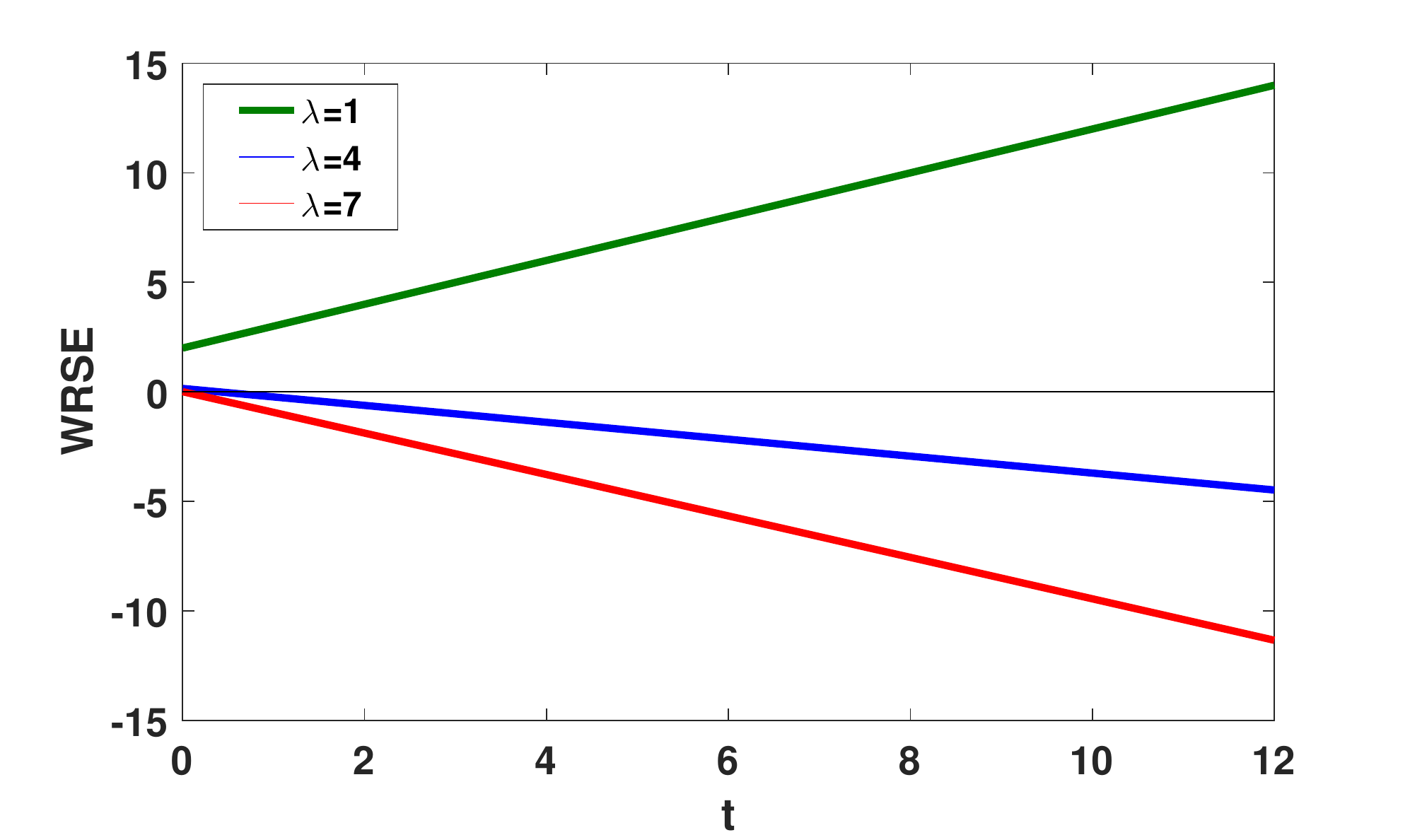}}
		\subfigure[]{\label{c1}\includegraphics[height=1.26in]{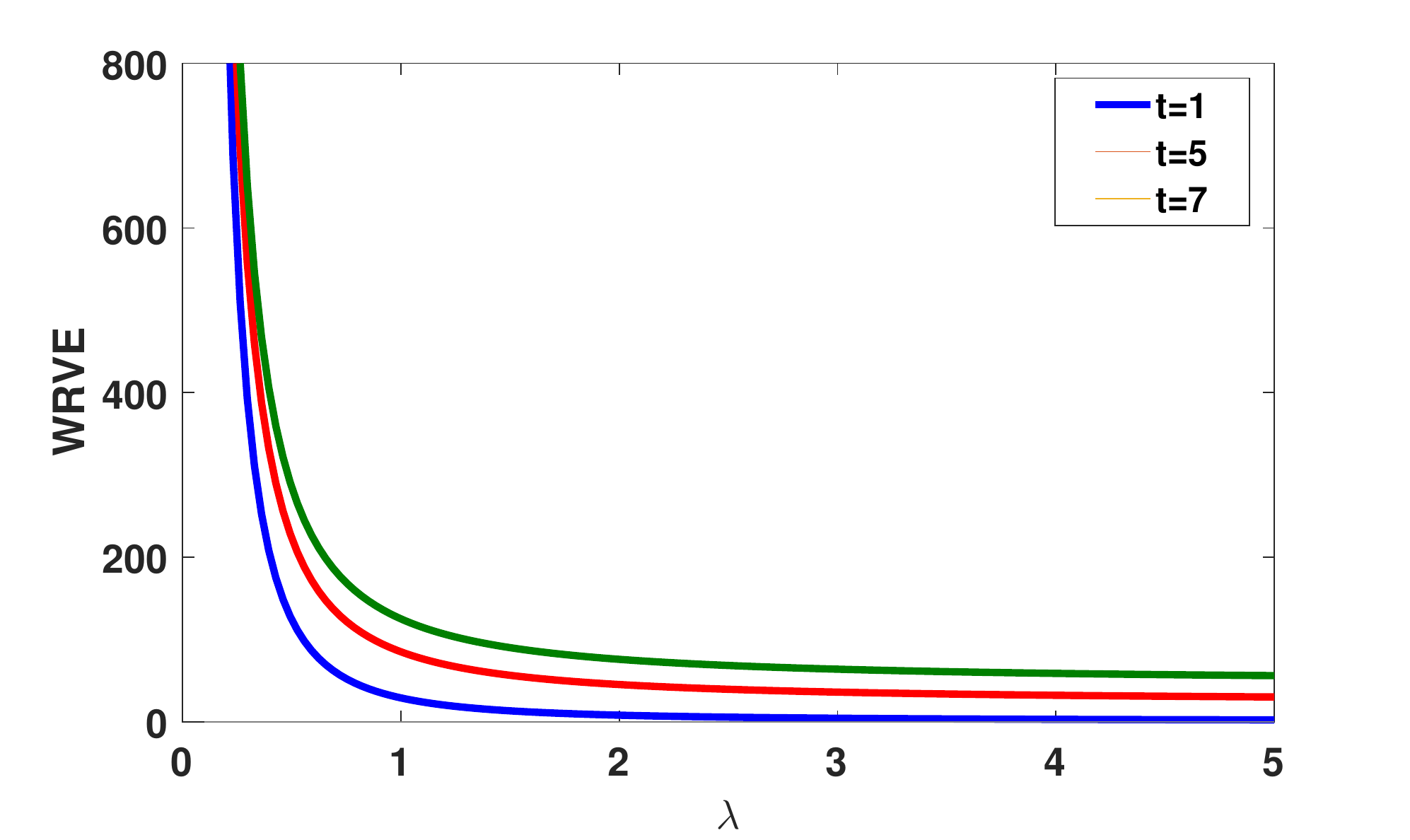}}
			%\subfigure[]{\label{c1}\includegraphics[height=1.9in]{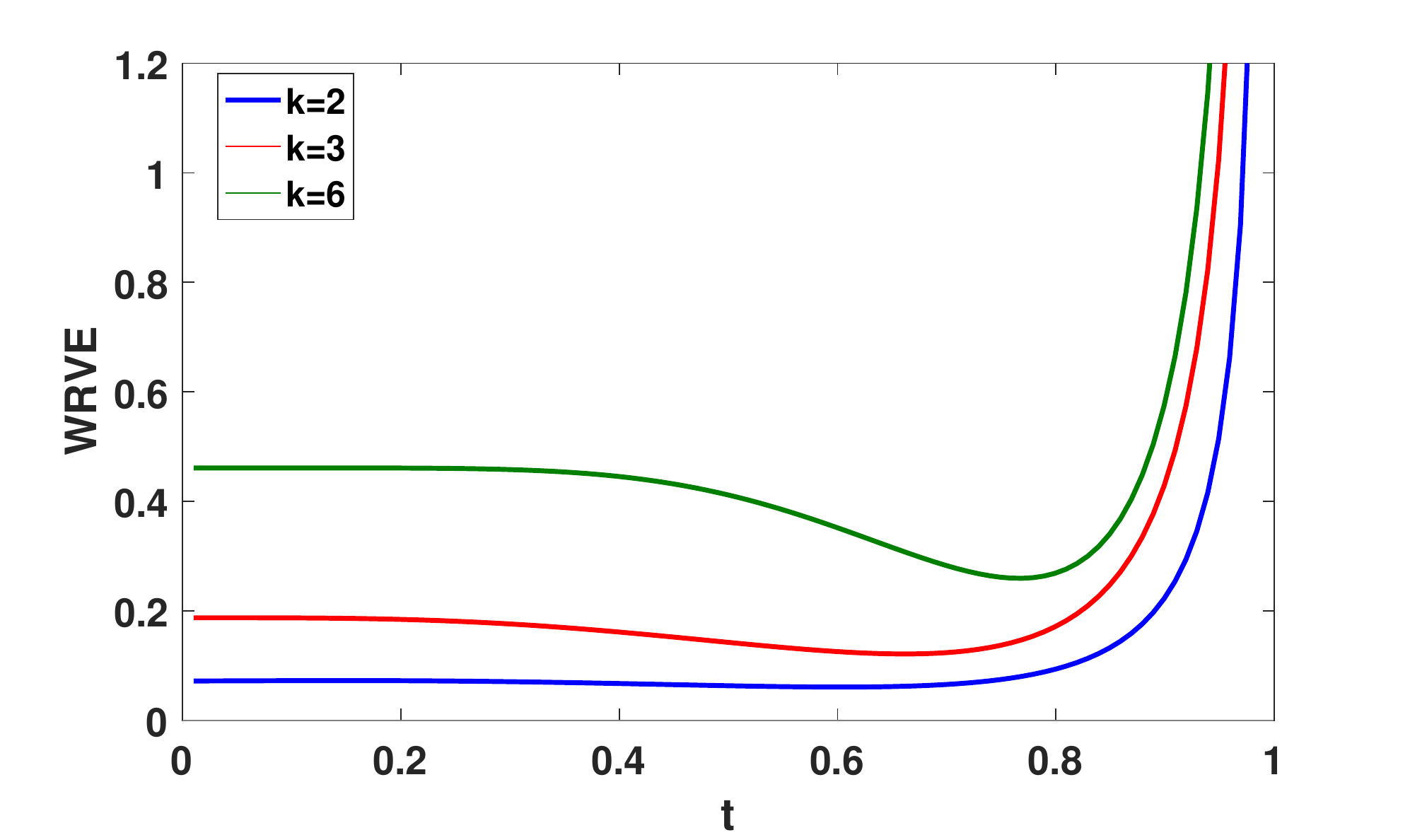}}
		\caption{Graphs of  $(a)$ WRVE and $(b)$ WRSE for exponential distribution with respect to $t$ as in Example \ref{ex3.1}$(ii)$ for $\lambda=1$, $\lambda=4,$ and $\lambda=7$. $(c)$ Graphs of the WRVE of exponential distribution with respect to $\lambda$, for different values of $t=1, 5, 7$ as in Example  \ref{ex3.1}$(ii)$. }
	\end{center}
\end{figure}

	\begin{figure}[h!]
	\begin{center}
		\subfigure[]{\label{c1}\includegraphics[height=1.9in]{p_wrv.eps}}
			\subfigure[]{\label{c1}\includegraphics[height=1.9in]{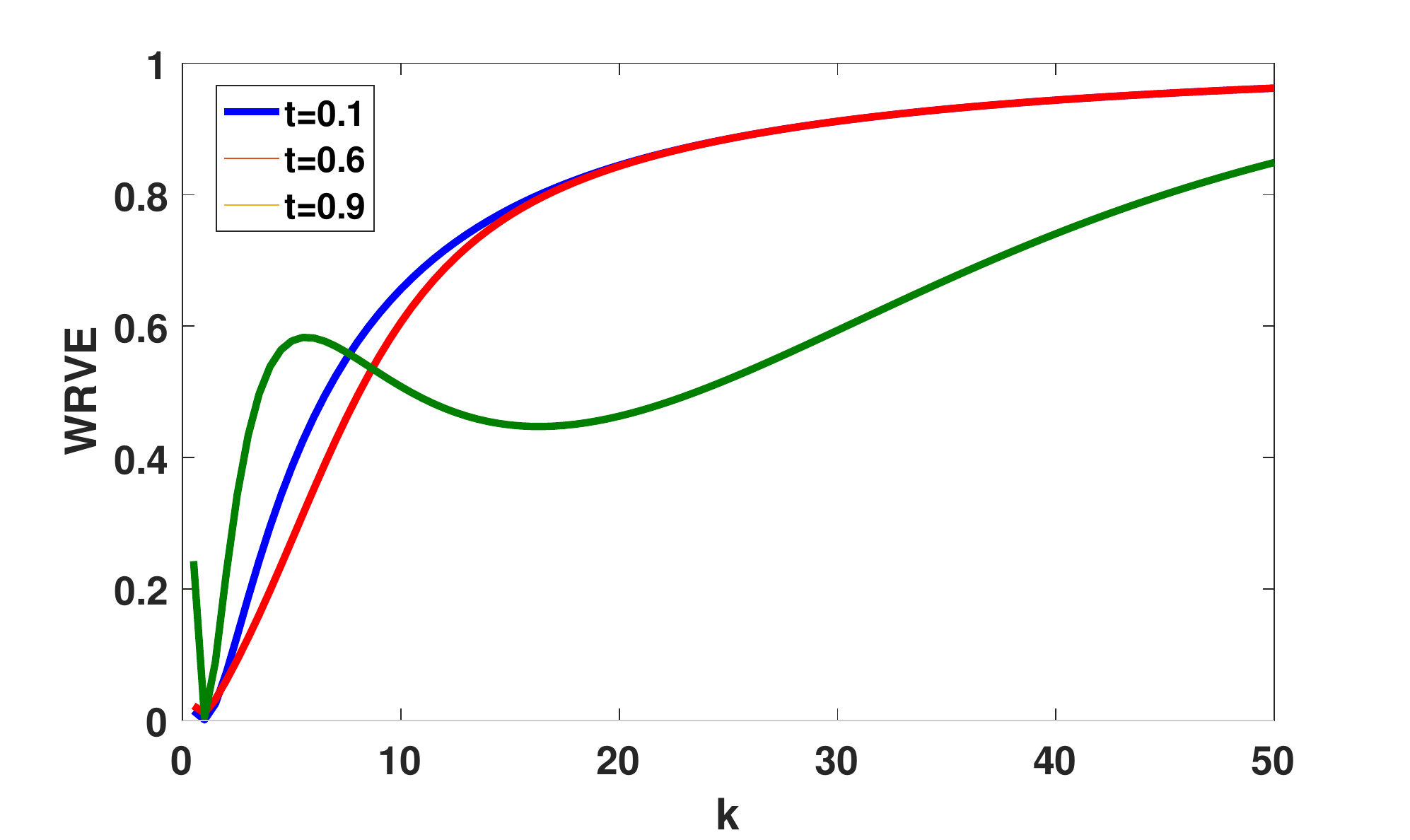}}
		\caption{Graphs of  the WRVE for power distribution as in Example \ref{ex3.1}$(iii)$ with respect to $(a)$ $t$ and $(b)$ $k$.}
	\end{center}
\end{figure}

The following result provides an explicit expression of  the derivative of WRVE with respect to $t$. We recall that the vitality function of an absolutely continuous random variable $X$ was proposed by \cite{kupka1989hazard}, which is defined as
\begin{eqnarray}
v(t)=E[X|X>t]=\int_{t}^{\infty}x\frac{f(x)}{\tilde F(t)}dx.
\end{eqnarray}
The vitality function is interpreted as the mean life span of a system whose age exceeds $t>0.$ The hazard rate is an important concept widely used in survival analysis and reliability theory. It provides the instantaneous failure rate of an item given that the item has survived till time $t.$ For a non-negative and absolutely continuous random variable $X$ the hazard rate is given by 
\begin{eqnarray}
r(t)=\frac{f(t)}{\tilde{F}(t)},~t>0,
\end{eqnarray}
such that $\tilde{F}(t)>0.$

 \begin{proposition}\label{pro3.1}
	We have
	\begin{eqnarray}\label{eq3.3}
	\frac{d}{dt}{\it VE}^w(X;t)&=&r(t)\{{\it VE}^w(X;t)-2(H^w(X;t)v(t)+H^{w^*}(X;t))^2\nonumber\\
	&~&-(H^w(X;t)+t\log r(t))^2\},
	\end{eqnarray}
	where $H^w(X;t)$ and $H^{w^*}(X;t)$ are the WRSEs with weights $w(x)=x$ and $w^*(x)=x^2$, respectively.
\end{proposition}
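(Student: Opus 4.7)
The plan is to differentiate the explicit form (\ref{eq3.2}) of ${\it VE}^{w}(X;t)$ and use standard identities for conditional expectations on $\{X>t\}$ to re-express everything in terms of the quantities $H^{w}(X;t)$, $H^{w^{*}}(X;t)$, $v(t)$, $r(t)$ and $\Lambda(t)$. First I would record the elementary facts $\frac{d}{dt}\tilde F(t)=-f(t)$, $\Lambda'(t)=r(t)$, $\frac{d}{dt}(\tilde F(t)^{-1})=r(t)/\tilde F(t)$, together with the Leibniz-type identity
\begin{equation*}
\frac{d}{dt}\left[\frac{1}{\tilde F(t)}\int_t^{\infty}\psi(x)f(x)\,dx\right]=r(t)\left[\frac{1}{\tilde F(t)}\int_t^{\infty}\psi(x)f(x)\,dx-\psi(t)\right].
\end{equation*}
Applying this with $\psi(x)=x^{k}$, $\psi(x)=x^{k}\log f(x)$ and $\psi(x)=x^{2}(\log f(x))^{2}$ for $k=1,2$ immediately delivers $v'(t)$, the derivative of $E[X^{2}\mid X>t]$, and the derivatives of the two conditional integrals appearing in (\ref{eq3.2}).

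Next I would decompose $H^{w}(X;t)$ and $H^{w^{*}}(X;t)$ into their respective conditional moments of $X\log f(X)$ and $X^{2}\log f(X)$, plus $\Lambda(t)$-multiples of $v(t)$ and $E[X^{2}\mid X>t]$, and differentiate term by term. Using the identity above and the pointwise relation $\log f(t)+\Lambda(t)=\log r(t)$, I expect these to collapse to
\begin{align*}
\frac{d}{dt}H^{w}(X;t)&=r(t)\bigl[H^{w}(X;t)+t\log r(t)-v(t)\bigr],\\
\frac{d}{dt}H^{w^{*}}(X;t)&=r(t)\bigl[H^{w^{*}}(X;t)+t^{2}\log r(t)-E[X^{2}\mid X>t]\bigr].
\end{align*}

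Differentiating each of the four summands of (\ref{eq3.2}) and pulling out the common factor $r(t)$, I would then use (\ref{eq3.2}) itself to reassemble ${\it VE}^{w}(X;t)$ inside the braces, leaving a residual "boundary" block. The decisive algebraic manoeuvre is to invoke $\log f(t)=\log r(t)-\Lambda(t)$ once more, which collapses the sub-block $-t^{2}(\log f(t))^{2}+\Lambda^{2}(t)t^{2}-2\Lambda(t)t^{2}\log r(t)$ into $-t^{2}(\log r(t))^{2}$. Combined with $-[H^{w}(X;t)]^{2}-2H^{w}(X;t)\,t\log r(t)$ coming from the differentiation of the final term of (\ref{eq3.2}), this reveals the perfect square $-(H^{w}(X;t)+t\log r(t))^{2}$ appearing in (\ref{eq3.3}), while the remaining linear pieces regroup into the companion term on the right-hand side.

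The main obstacle is purely bookkeeping: tracking the signs and many cancellations induced by the simultaneous appearance of $\Lambda(t)$ in (\ref{eq3.2}) and in the definitions of $H^{w}$ and $H^{w^{*}}$, and identifying which regroupings rebuild ${\it VE}^{w}(X;t)$ versus those that belong to the boundary contribution. No analytic input beyond the Leibniz rule and the identity $\log f(t)+\Lambda(t)=\log r(t)$ is required.
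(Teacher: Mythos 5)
Your strategy is exactly the paper's: differentiate the expansion (\ref{eq3.2}), and feed in the three derivative identities, which are precisely the paper's (\ref{eq3.5})--(\ref{eq3.7}) (the paper states them without proof; your Leibniz identity $\frac{d}{dt}\big[\tfrac{1}{\tilde F(t)}\int_t^\infty\psi f\,dx\big]=r(t)\big[\tfrac{1}{\tilde F(t)}\int_t^\infty\psi f\,dx-\psi(t)\big]$ is the right way to get them). Your perfect-square step is also correct: using $\log f(t)+\Lambda(t)=\log r(t)$, the block $-t^{2}(\log f(t))^{2}+\Lambda^{2}(t)t^{2}-2\Lambda(t)t^{2}\log r(t)$ collapses to $-t^{2}(\log r(t))^{2}$, which together with $-[H^w(X;t)]^{2}-2H^w(X;t)\,t\log r(t)$ gives $-(H^w(X;t)+t\log r(t))^{2}$.

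The gap is in your last sentence, where you assert that "the remaining linear pieces regroup into the companion term on the right-hand side." If you actually finish the bookkeeping, after extracting ${\it VE}^w(X;t)$ and the perfect square the residue inside the braces is $-2\big(H^{w^*}(X;t)-H^w(X;t)v(t)\big)$, which is \emph{linear} in $H^{w^*}(X;t)$ (the $\pm 2\Lambda(t)E[X^2|X>t]$ contributions cancel). It cannot equal the displayed companion term $-2\big(H^w(X;t)v(t)+H^{w^*}(X;t)\big)^{2}$: $H^{w^*}$ enters (\ref{eq3.2}) only linearly, multiplied by $\Lambda(t)$, and its derivative (\ref{eq3.5}) is again linear in $H^{w^*}$, so no squared term in $H^{w^*}$ can arise from this computation. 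In other words, your derivation (correctly carried out) proves $\frac{d}{dt}{\it VE}^w(X;t)=r(t)\{{\it VE}^w(X;t)-2(H^{w^*}(X;t)-H^w(X;t)v(t))-(H^w(X;t)+t\log r(t))^{2}\}$, not the formula as printed; your own word "linear" is at odds with the quadratic term you claim to recover. The paper's proof has the same silent jump ("using (\ref{eq3.5})--(\ref{eq3.7}) in (\ref{eq3.4}) we get the required result"), so your proposal is faithful to the paper's method, but to be a complete proof you must either exhibit the final regrouping explicitly --- which shows the stated right-hand side needs the square removed and the sign of the $H^w v$ term adjusted --- or concede that the displayed identity does not follow.
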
 

  \begin{proof}
	Differentiating  (\ref{eq3.2}) with respect to $t$, we obtain
	\begin{eqnarray}\label{eq3.4}
	\frac{d}{dt}{\it VE}^w(X;t)&=&\frac{d}{dt}\bigg(\frac{1}{\tilde F(t)}\int_{t}^{\infty}x^2f(x)[\log f(x)]^2dx\bigg)-\frac{d}{dt}\big(\Lambda^2(t)E[X^2|X>t]\big)\nonumber\\
	&~&-2\frac{d}{dt}\big(\Lambda(t)H^{w^*}(X;t)\big)
	-\frac{d}{dt}\big([H^w(X;t)]^2\big)\nonumber\\
	&=& \frac{f(t)}{(\tilde {F}(t))^2}\int_{t}^{\infty}x^2f(x)[\log f(x)]^2dx-\frac{1}{\tilde {F}(t)}t^2f(t)[\log f(t)]^2\nonumber\\
	&~&-2\Lambda(t)r(t)E[X^2|X>t]-\Lambda^2(t)\frac{d}{dt}E[X^2|X>t]-2r(t)H^{w^*}(X;t)\nonumber\\
	&~&-2\Lambda(t)\frac{d}{dt}H^{w^*}(X;t)-2H^w(X;t)\frac{d}{dt}H^w(X;t).
	\end{eqnarray}
	Further, 
	\begin{eqnarray}\label{eq3.5}
	\frac{d}{dt}\big(H^{w^*}(X;t)\big)=r(t)\{H^{w^*}(X;t)+t^2\log r(t)-E[X^2|X>t]\},
	\end{eqnarray}
	\begin{eqnarray}\label{eq3.6}
	\frac{d}{dt}\big(E[X^2|X>t]\big)=r(t)\{E[X^2|X>t]-t^2\},
	\end{eqnarray}
	and
	\begin{eqnarray}\label{eq3.7}
	\frac{d}{dt}\big(H^{w}(X;t)\big)=r(t)\{H^{w}(X;t)+t\log r(t)-v(t)\}.
	\end{eqnarray}
	Now, using $(\ref{eq3.5})$,  $(\ref{eq3.6}),$  and  $(\ref{eq3.7})$ in  $(\ref{eq3.4})$, we get the required result. This completes the proof.
\end{proof}

%\begin{proposition}
%	Suppose $X$ is a non-negative random variable. 
%	\begin{itemize}
%		\item[(i)] If the WRVE is constant, say $C$, then we have a relation as
%		\begin{eqnarray*}\label{eq3.5*}
%			2(H^w(X;t)v(t)+H^{w^*}(X;t))^2+(H^w(X;t)+t\log  r(t))^2=C.
%		\end{eqnarray*}
%		\item[(ii)]	Suppose $C$ is a real number, and if
%		\begin{eqnarray*}
%			2(H^w(X;t)v(t)+H^{w^*}(X;t))^2+(H^w(X;t)+t\log r(t))^2=C
%		\end{eqnarray*}
%		holds, then the WRVE is written as 
%		\begin{eqnarray*}
%		{\it VE}^w(X;t)=C+\frac{{\it VE}^w(X)-C}{\tilde {F}(t)},
%		\end{eqnarray*}   	    
%	\end{itemize}			   	    
%	where $r(t)$, $H^w(X;t)$ and $H^{w^*}(X;t)$ are as in Proposition \ref{pro3.1}.
%\end{proposition}
%\begin{proof}~~
%	\begin{itemize}
%		
%		\item[(i)] Under the assumption made, the proof of this part follows directly from (\ref{eq3.3}), and thus it is omitted. 
%		
%		\item[(ii)] Using the assumptions in (\ref{eq3.3}), and after some simplification, we obtain
%		\begin{eqnarray}\label{eq3.7*}
%		\frac{d}{dt}{\it VE}^w(X;t)&=&r(t)\{{\it VE}^w(X;t)-C\},
%		\end{eqnarray}  
%		with the initial condition ${\it VE}^w(X;t)|_{t=0}={\it VE}^w(X)$. Now, solving the first order differential equation in (\ref{eq3.7*}), the result can be easily obtained. This completes the proof.
%	\end{itemize}	    	    
%\end{proof}

Next, we obtain an upper bound of the WRVE. 

\begin{theorem}\label{th3.1}
		Let $X$ be a non-negative absolutely continuous random variable with PDF $f(\cdot)$ satisfying
		the condition in (\ref{eq2.10}).
		Then, for all $t>0$, we have
		\begin{eqnarray*}
			VE^x(X;t)&\leq&  H^{w_{2}}(X;t)+\Lambda^2(t)E[X^2|X>t],
		\end{eqnarray*}
		where  $H^{w_{2}}(X;t)$ is the WRSE with weight $w_{2}(x)=\alpha x^3+\beta x^2$.
	\end{theorem}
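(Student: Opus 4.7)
My plan is to mirror the proof of Proposition \ref{prop2.2} and extend it to the residual setting. Write $f_t(x) = f(x)/\tilde F(t)$ for $x > t$. Starting from the definition (\ref{eq3.1}) with $w(x)=x$ and discarding the non-negative subtraction $[H^x(X;t)]^2$, the claim reduces to the single-integral upper bound
\begin{equation*}
\int_t^\infty \frac{x^2 f(x)}{\tilde F(t)}\bigl[\log f_t(x)\bigr]^2\, dx \;\leq\; H^{w_2}(X;t) + \Lambda^2(t)\, E[X^2 \mid X > t].
\end{equation*}

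The key step is a pointwise bound on $[\log f_t(x)]^2$. Writing $\log f_t(x) = \log f(x) + \Lambda(t)$ and expanding the square, the cross term $2\Lambda(t)\log f(x)$ is non-positive since $f(x) \leq 1$ forces $\log f(x) \leq 0$ while $\Lambda(t) \geq 0$. Therefore
\begin{equation*}
[\log f_t(x)]^2 \;\leq\; [\log f(x)]^2 + \Lambda^2(t).
\end{equation*}
The lower inequality $f(x) \geq e^{-(\alpha x + \beta)}$ from (\ref{eq2.10}) rewrites as $0 \leq -\log f(x) \leq \alpha x + \beta$, and multiplying these two non-negative factors together yields $[\log f(x)]^2 \leq (\alpha x + \beta)(-\log f(x))$ — the same inequality used inside Proposition \ref{prop2.2}. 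Stitching the two estimates gives the pointwise bound
\begin{equation*}
[\log f_t(x)]^2 \;\leq\; (\alpha x + \beta)(-\log f(x)) + \Lambda^2(t).
\end{equation*}

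Multiplying this by $x^2 f_t(x)$ and integrating over $(t, \infty)$, the first contribution becomes $-\int_t^\infty (\alpha x^3 + \beta x^2) f_t(x) \log f(x)\,dx$, which identifies with $H^{w_2}(X;t)$ for the choice $w_2(x) = \alpha x^3 + \beta x^2$, while the $\Lambda^2(t)$ contribution is exactly $\Lambda^2(t) E[X^2 \mid X > t]$. Assembling yields the desired inequality.

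The main obstacle is the pointwise bound in the middle step. A direct imitation of Proposition \ref{prop2.2} would try $[\log f_t(x)]^2 \leq (\alpha x + \beta)(-\log f_t(x))$, but this is unavailable because the condition (\ref{eq2.10}) constrains only $f$ and not $f_t$; since $\tilde F(t) \leq 1$, the ratio $f_t(x)$ can exceed $1$ and $-\log f_t(x)$ can be negative, so the one-step inequality breaks. The workaround — peeling off the deterministic shift $\Lambda(t)$ first, exploiting the favourable sign of the resulting cross term to discard it, and only then applying the Proposition \ref{prop2.2}-style inequality to the surviving $[\log f(x)]^2$ — is the essential new ingredient.
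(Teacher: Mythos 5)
Your proposal is correct and follows essentially the same route as the paper: drop the non-negative term $[H^x(X;t)]^2$, write $\log f_t(x)=\log f(x)+\Lambda(t)$, expand the square, discard the non-positive cross term $2\Lambda(t)\log f(x)$, and bound $[\log f(x)]^2\leq(\alpha x+\beta)(-\log f(x))$ using (\ref{eq2.10}) before integrating against $x^2 f_t(x)$. Your closing remark about why the one-step imitation of Proposition \ref{prop2.2} fails for $f_t$ is a sound observation, but the mechanism you adopt is exactly the paper's.
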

	
	\begin{proof}
		From  (\ref{eq3.1}) we obtain
		\begin{eqnarray}\label{eq3.11}
		VE^x(X;t)&=& \int_{t}^{\infty}x^2\frac{f(x)}{\tilde F(t)}[\log f(x)+\Lambda(t)]^2dx-[H^x(X;t)]^2\nonumber\\
		&\le&\int_{t}^{\infty}x^2\frac{f(x)}{\tilde F(t)}[\log f(x)+\Lambda(t)]^2dx.
		\end{eqnarray}
		Using (\ref{eq2.10}) in the right hand side integral  of (\ref{eq3.11}), we get
		\begin{eqnarray}\label{eq3.12}
		\int_{t}^{\infty}x^2\frac{f(x)}{\tilde F(t)}[\log f(x)+\Lambda(t)]^2dx&=& \int_{t}^{\infty}x^2\frac{f(x)}{\tilde F(t)}(\log f(x))^2dx+2\int_{t}^{\infty}x^2\frac{f(x)}{\tilde F(t)}\Lambda(t)\log f(x)dx\nonumber\\
		&~&+\frac{\Lambda^2(t)}{\tilde F(t)} \int_{t}^{\infty}x^2f(x)dx\nonumber\\
		&\leq& -\int_{t}^{\infty}x^2(\alpha x+\beta)\frac{f(x)}{\tilde F(t)}\log f(x)dx+\frac{\Lambda(t)}{\tilde F(t)}\int_{t}^{\infty}x^2f(x)dx\nonumber\\
		&~&-2\Lambda(t)\int_{t}^{\infty}x^2(\alpha x+\beta)\frac{f(x)}{\tilde F(t)}dx\nonumber\\
		&=&H^{w_{2}}(X;t)+\Lambda^2(t)E[X^2|X>t].
		\end{eqnarray}
		Now, using (\ref{eq3.12}) in (\ref{eq3.11}), the result readily follows. 
	\end{proof}
	
	Next, we consider an example for the purpose of illustration of the preceding theorem.
	
	\begin{example}\label{ex3.2}
		Let $X$ be exponentially distributed random variable with CDF $F(x)=1-e^{- x},~x\in(0,\infty)$. Then, it can be easily established that the condition $e^{-(\alpha x+\beta)}\leq f(x)\leq1,~x>0$,
		with $\alpha=1$ and $\beta=2$ is satisfied. Further,
		\begin{align*}
		H^{w_{2}}(X;t)+\Lambda^2(t)E[X^2|X>t]&=[\alpha(t^4+4t^3+12t^2+24 t+24)-(t^3+3t^2\\
		&+6 t+6)(-\beta+\alpha t)-(t^2+2 t+2)\beta t\\
		&+t^2(t^2+2 t+2)],
		\end{align*}
		where $\alpha=1$ and $\beta=2$. The closed form expression of ${\it VE}^x(X;t)$ is given in Example \ref{ex3.1}$(ii)$. Now, in Figure $7,$ the graphs of $H^{w_{2}}(X;t)+\Lambda^2(t)E[X^2|X>t]$ and  ${\it VE}^x(X;t)$ are plotted, illustrating the result in Theorem \ref{th3.1}.
\end{example}

\begin{figure}[htbp!]\label{fig3}
	\centering
	\includegraphics[width=14cm,height=8cm]{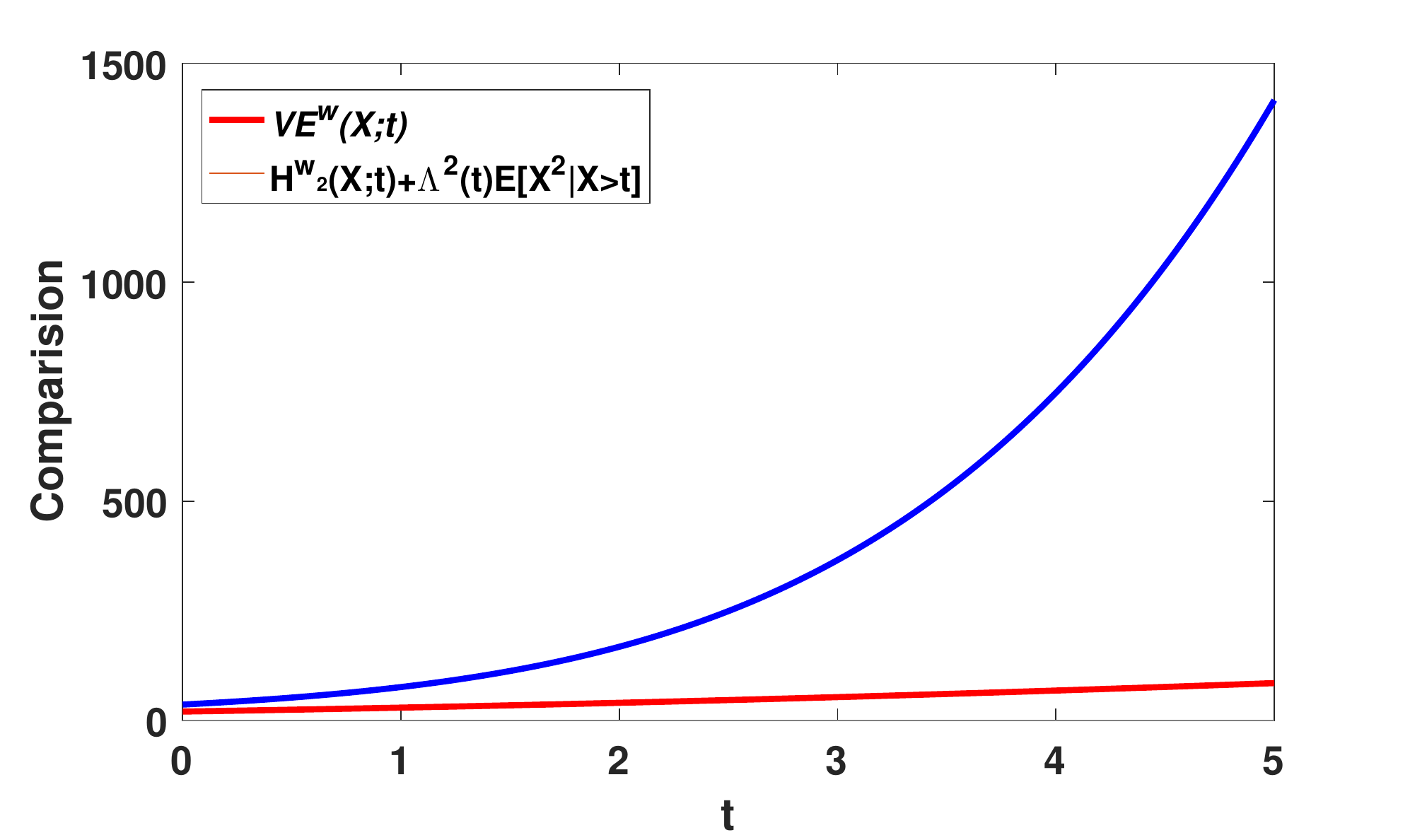}
	\caption{Graphs of $ H^{w_{2}}(X;t)+\Lambda^2(t)E[X^2|X>t]$ and $\it{VE}^x(X;t)$ as in Example \ref{ex3.2}.}
	\label{fig:PdfofIGLFRD*}
\end{figure}

Next, we obtain a lower bound of the WRVE in terms of the variance residual life (VRL), which is defined as 
\begin{eqnarray}
\sigma^{2}(t)=Var[X-t|X>t]=\frac{2}{\tilde{F}(t)}\int_{t}^{\infty}dv\int_{v}^{\infty}\tilde{F}(u)du-[\mu(t)]^{2},
\end{eqnarray}
where $\mu(t)$ is known as the mean residual lifetime (MRL), defined as
\begin{eqnarray}
\mu(t)=E[X-t|X>t]=\int_{t}^{\infty}\frac{\tilde{F}(x)}{\tilde{F}(t)}dx.
\end{eqnarray}

\begin{theorem}\label{th3.3}
	Suppose $X_t$ is the residual lifetime  with finite MRL $\mu(t)$ and finite VRL $\sigma^2(t)$. Then,
	\begin{eqnarray}
{\it VE}^x(X;t)\geq \sigma^2(t)\{1+E[-\eta_t(X_t)\log f_t(X_t)]+E[X_t\eta^{'}_t(X_t)]\}^2,
	\end{eqnarray}
	where $\eta_t(x)$ is a real-valued function, obtained from the relationship
	$$\sigma^2(t)\eta_t(x)f_t(x)=\int_{0}^{x}(\mu(t)-u)f_t(u)du, ~x>0.$$
\end{theorem}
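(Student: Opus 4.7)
The plan is to combine the Cauchy–Schwarz inequality with a Stein/Cacoullos–Papathanasiou covariance identity for $X_t$. Write the weighted residual information content as $h(X_t):=-X_t\log f_t(X_t)$, so that ${\it VE}^x(X;t)=Var[h(X_t)]$ and
$$h'(x)=-\log f_t(x)-x\,\frac{f_t'(x)}{f_t(x)}.$$

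First, I would apply Cauchy–Schwarz to the covariance between $X_t$ and $h(X_t)$, which gives
$$\{Cov(X_t,h(X_t))\}^2\le Var(X_t)\cdot Var(h(X_t))=\sigma^2(t)\,{\it VE}^x(X;t),$$
so it suffices to compute $Cov(X_t,h(X_t))$ and show it equals $\sigma^2(t)$ times the expression inside the braces. Here the defining relation $\sigma^2(t)\eta_t(x)f_t(x)=\int_0^x(\mu(t)-u)f_t(u)du$ is exactly the Cacoullos–Papathanasiou weight: a standard integration by parts in $Cov(X_t,h(X_t))=\int_0^\infty(x-\mu(t))h(x)f_t(x)dx$ produces
$$Cov(X_t,h(X_t))=\sigma^2(t)\,E[\eta_t(X_t)\,h'(X_t)],$$
provided the boundary terms vanish (which I will justify assuming $\eta_t(x)f_t(x)h(x)\to 0$ at the endpoints).

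Next I would substitute the expression for $h'$ to obtain
$$E[\eta_t(X_t)\,h'(X_t)]=E[-\eta_t(X_t)\log f_t(X_t)]-\int_0^\infty x\,\eta_t(x)\,f_t'(x)\,dx,$$
and handle the remaining integral by a second integration by parts with $u=x\eta_t(x)$, $dv=f_t'(x)dx$, yielding $-E[\eta_t(X_t)]-E[X_t\eta_t'(X_t)]$ (again modulo boundary terms). The key computation is then $E[\eta_t(X_t)]=1$, which follows directly from the defining relation: multiplying by $f_t$, integrating, swapping the order of integration, and using $\int_0^\infty u(u-\mu(t))f_t(u)du=\sigma^2(t)$.

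Putting the pieces together gives $Cov(X_t,h(X_t))=\sigma^2(t)\{1+E[-\eta_t(X_t)\log f_t(X_t)]+E[X_t\eta_t'(X_t)]\}$, and inserting this in the Cauchy–Schwarz bound produces the claimed inequality. The main obstacle is the justification of the vanishing boundary terms in the two integrations by parts, since these require mild integrability/tail conditions on $f_t$ and $\eta_t$ (e.g.\ the finiteness of the MRL and VRL plus standard regularity on the density). Beyond that, every step is a routine manipulation of the Stein-type identity applied to the specific choice $h(x)=-x\log f_t(x)$.
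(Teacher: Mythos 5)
Your argument is correct and rests on the same engine as the paper's proof: the Cacoullos--Papathanasiou variance lower bound applied to $h(x)=-x\log f_t(x)$ with the weight $\eta_t$ defined exactly as in the statement. The differences are in packaging. The paper simply cites the inequality $Var[h(X)]\geq \sigma^2\{E[\xi(X)h'(X)]\}^2$ from Cacoullos and Papathanasiou and then evaluates $E\big[\eta_t(X_t)X_t f_t'(X_t)/f_t(X_t)\big]$ by differentiating the defining relation, i.e.\ using $\eta_t(x)f_t'(x)/f_t(x)=\frac{\mu(t)-x}{\sigma^2(t)}-\eta_t'(x)$ pointwise, which gives $-1-E[X_t\eta_t'(X_t)]$ with no boundary terms at all. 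You instead re-derive the lower bound from Cauchy--Schwarz plus the covariance identity $Cov(X_t,h(X_t))=\sigma^2(t)E[\eta_t(X_t)h'(X_t)]$, and you dispose of the awkward term by a second integration by parts together with the standard fact $E[\eta_t(X_t)]=1$ (note that to make the order-of-integration swap legitimate you should first rewrite $\int_0^x(\mu(t)-u)f_t(u)\,du=-\int_x^\infty(\mu(t)-u)f_t(u)\,du$, since the integrand changes sign; the identity itself is correct). What your route buys is a self-contained proof that does not lean on the cited inequality; what it costs is two extra boundary-term assumptions of the form $x\,\eta_t(x)f_t(x)h(x)\to 0$ and $x\,\eta_t(x)f_t(x)\to 0$ at the endpoints of the support, which you correctly flag and which the paper's pointwise-substitution step avoids (though the paper implicitly inherits whatever regularity the cited covariance inequality requires). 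Both computations land on the same bracketed quantity $1+E[-\eta_t(X_t)\log f_t(X_t)]+E[X_t\eta_t'(X_t)]$, so the proposal is sound.
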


\begin{proof}
	For an absolutely continuous random variable $X$ with PDF $f(\cdot)$, mean $\mu,$ and variance $\sigma^2$, the following inequality can be established (see \cite{cacoullos1989characterizations})
	\begin{eqnarray}
	Var[h(X)]\geq \sigma^2(E[\xi(X)h^{'}(X)])^2,
	\end{eqnarray}
	where a real-valued function $\xi(\cdot)$ is defined by the relationship $\sigma^2\xi(x)f(x)=\int_{0}^{x}(\mu(t)-u)f(u)du.$ In order to prove the result, we assume $X_t$ as a reference random variable, with $h(x)=IC^{x}(x)=-x\log f(x)$. Thus, 
	\begin{eqnarray}\label{eq3.18}
	VE^{x}(X;t)=Var[-X_t\log f_t(X_t)]&\geq& \sigma^2(t)\bigg\{E\big[\eta_t(X_t)\big(X_t\log f_t(X_t)\big)'\big]\bigg\}^2\nonumber\\
	&=&\sigma^2(t)\bigg\{E[-\eta_t(X_t)\log f_t(X_t)]-E\bigg[\eta_t(X_t)X_t\frac{f^{'}_t(X_t)}{f_t(X_t)}\bigg]\bigg\}^2.\nonumber\\
	\end{eqnarray}
	Further,
	\begin{eqnarray}\label{eq3.19}
	E\bigg[\eta_t(X_t)X_t\frac{f^{'}_t(X_t)}{f_t(X_t)}\bigg]&=& E\bigg[X_t\bigg(\frac{\mu(t)-X_t}{\sigma^2(t)}-\eta^{'}_t(X_t)\bigg)\bigg]\nonumber\\
	&=& -1-E[X_t\eta^{'}_t(X_t)].
	\end{eqnarray}
	Using (\ref{eq3.19}) in (\ref{eq3.18}), the required result readily follows, completing the proof of the theorem. 
\end{proof}

As a consequence of Theorem \ref{th3.3}, we get the following remark.

\begin{remark}~~
	\begin{itemize}
		\item[(i)] Let $\eta_t(x)$ be increasing in $x$. Then, 
		$${\it VE}^x(X;t)\geq\sigma^2(t)(E[\eta_t(X_t)\log f_t(X_t)])^2,~t\ge 0.$$
		\item[(ii)] For $f_t(x)\leq1,$ we have 
		$${\it VE}^x(X;t)\geq\sigma^2(t)(E[X_t\eta^{'}_t(X_t)])^2,~t\ge0.$$
	\end{itemize}
\end{remark}

Next, we study WRVE under general monotonic transformations.

\begin{theorem}\label{th3.4}
	Let $\phi(\cdot)$ be a strictly monotonic, continuous and differentiable function, and $Y=\phi(X)$, where $X$ is a non-negative absolutely continuous random variable. Then,
	\begin{equation}\label{eq3.20}
		{\it VE}^y(Y;t)=\left\{
		\begin{array}{ll}
		{\it VE}^{\phi}(X;\phi^{-1}(t)) -2H^{\phi}(X;\phi^{-1}(t))E[\eta_1(X)|X>\phi^{-1}(t)]\\
					+Var[\eta_1(X)|X>\phi^{-1}(t)]-2E[\phi(X)\eta_1(X) \log \frac{f(X)}{\tilde{F}(\phi^{-1}(t)}|X>\phi^{-1}(t)], \\~		
			if~\phi(\cdot)~ is ~strictly~ increasing,
			\\
			\\
		 \overline{{\it VE}}^{\phi}(X;\phi^{-1}(t)) -2\bar {H}^{\phi}(X;\phi^{-1}(t))E[\eta_2(X)|X\leq\phi^{-1}(t)]\\
				   			+Var[\eta_2(X)|X\leq\phi^{-1}(t)]-2E[\phi(X)\eta_2(X)\log \frac{f(X)}{F(\phi^{-1}(t))}|X\leq\phi^{-1}(t)],\\~	
			if~ \phi(\cdot)~ is ~strictly~ decreasing,
		\end{array}
		\right.
	\end{equation}
	where $\eta_1(x)=\phi(x)\log\phi'(x)$,  $\eta_2(x)=\phi(x)\log(-\phi'(x))$,  $H^{\phi}(X;t)=\int_{t}^{\infty}\phi(x)\frac{f(x)}{\tilde{F}(t)}\log \frac{f(x)}{\tilde{F}(t)}dx$, ${\it VE}^{\phi}(X;t)=\int_{t}^{\infty}\phi^2(x)\frac{f(x)}{\tilde{F}(t)}(\log \frac{f(x)}{\tilde{F}(t)})^2dx-[H^{\phi}(X;t)]^2$,
	$\bar {H}^{\phi}(X;t)=\int_{0}^{t}\phi(x)\frac{f(x)}{{F}(t)}\log \frac{f(x)}{{F}(t)}dx$, 
	and
	$\overline{{\it VE}}^{\phi}(X;t)=\int_{0}^{t}\phi^2(x)\frac{f(x)}{{F}(t)}(\log \frac{f(x)}{{F}(t)})^2dx-[\bar {H}^{\phi}(X;t)]^2$.
\end{theorem}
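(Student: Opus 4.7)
The strategy is to reduce the WRVE of $Y=\phi(X)$ at level $t$ to a conditional computation for $X$ at level $\phi^{-1}(t)$ via the substitution $y=\phi(x)$, mirroring Theorem \ref{th2.2} but with the conditioning event fixed by the direction of monotonicity of $\phi$. The increasing and decreasing cases are handled separately because the event $\{Y>t\}$ corresponds to $\{X>\phi^{-1}(t)\}$ in one case and to $\{X\le\phi^{-1}(t)\}$ in the other.

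For the strictly increasing branch, start from (\ref{eq3.1}) in the form
\begin{equation*}
{\it VE}^{y}(Y;t)=\int_{t}^{\infty}y^{2}\frac{g(y)}{\tilde G(t)}\left[\log\frac{g(y)}{\tilde G(t)}\right]^{2}dy-[H^{y}(Y;t)]^{2},
\end{equation*}
where $g(y)=f(\phi^{-1}(y))/\phi'(\phi^{-1}(y))$ and $\tilde G(t)=\tilde F(\phi^{-1}(t))$. The substitution $y=\phi(x)$ yields $\log[g(y)/\tilde G(t)]=\log[f(x)/\tilde F(\phi^{-1}(t))]-\log\phi'(x)$ and rewrites both pieces as expectations under the residual density of $X$ on $(\phi^{-1}(t),\infty)$. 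Expanding $[\log(f/\tilde F)-\log\phi']^{2}$ and multiplying by $\phi^{2}(x)f/\tilde F$, the three resulting terms integrate, respectively, to ${\it VE}^{\phi}(X;\phi^{-1}(t))+[H^{\phi}(X;\phi^{-1}(t))]^{2}$, to the cross-term $-2E[\phi(X)\eta_{1}(X)\log(f(X)/\tilde F(\phi^{-1}(t)))\mid X>\phi^{-1}(t)]$, and to $E[\eta_{1}^{2}(X)\mid X>\phi^{-1}(t)]=Var[\eta_{1}(X)\mid X>\phi^{-1}(t)]+(E[\eta_{1}(X)\mid X>\phi^{-1}(t)])^{2}$, with $\eta_{1}(x)=\phi(x)\log\phi'(x)$. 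Applying the same change of variable to $H^{y}(Y;t)$ produces an expression affine in $H^{\phi}(X;\phi^{-1}(t))$ and $E[\eta_{1}(X)\mid X>\phi^{-1}(t)]$; its square contributes $[H^{\phi}]^{2}$ and $(E[\eta_{1}\mid\cdot])^{2}$, which cancel their matches above, together with a $\pm 2H^{\phi}E[\eta_{1}\mid\cdot]$ term whose sign is the one that survives into the increasing branch of (\ref{eq3.20}).

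The strictly decreasing branch is structurally identical but with the modifications forced by $\phi'(x)<0$: since $\{Y>t\}=\{X<\phi^{-1}(t)\}$, conditioning is on $X\le\phi^{-1}(t)$; the normalizer becomes $F(\phi^{-1}(t))$; $g(y)=-f(\phi^{-1}(y))/\phi'(\phi^{-1}(y))$; and the logarithm contributes $-\log(-\phi'(x))$, so that $\eta_{2}(x)=\phi(x)\log(-\phi'(x))$ takes the role of $\eta_{1}(x)$. With $H^{\phi}$ and ${\it VE}^{\phi}$ replaced by the past-type analogues $\bar{H}^{\phi}(X;\phi^{-1}(t))$ and $\overline{{\it VE}}^{\phi}(X;\phi^{-1}(t))$, the same expansion produces the decreasing branch of (\ref{eq3.20}).

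The main obstacle is purely bookkeeping: ensuring that the sign coming from $\phi'(x)$ is correctly absorbed into $\eta_{1}$ versus $\eta_{2}$, that the conditioning event matches the direction of monotonicity (so that the limits after the change of variable are $(\phi^{-1}(t),\infty)$ in one case and $(0,\phi^{-1}(t))$ in the other), and that the cross-term $-2H^{\phi}E[\eta_{1}\mid\cdot]$ (respectively $-2\bar{H}^{\phi}E[\eta_{2}\mid\cdot]$) emerges with the correct sign after $[H^{y}(Y;t)]^{2}$ is expanded and subtracted. Beyond this, the argument is a direct change of variable followed by algebraic expansion, with no delicate analytic input.
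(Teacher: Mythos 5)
Your proposal follows essentially the same route as the paper's proof: substitute $y=\phi(x)$ in the defining integral of ${\it VE}^y(Y;t)$, with the conditioning event and normalizer ($\tilde F(\phi^{-1}(t))$ versus $F(\phi^{-1}(t))$) determined by the direction of monotonicity, then expand $[\log(f/\tilde F)-\log\phi']^2$ and the squared weighted residual entropy term and collect the pieces into ${\it VE}^{\phi}$, $Var[\eta_i\mid\cdot]$ and the two cross terms. The only loose end is your hedge on the sign of the $2H^{\phi}E[\eta_1\mid\cdot]$ term: it is $-2H^{\phi}E[\eta_1\mid\cdot]$ under the usual negative-sign convention for the weighted residual Shannon entropy, which is the convention the displayed formula in the theorem implicitly uses (the paper's own sign bookkeeping for $H^{\phi}$ is inconsistent, but the squared terms are unaffected).
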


\begin{proof}
	Let $\phi(\cdot)$ be strictly increasing. Then, after some calculations, we get 
	\begin{eqnarray}\label{eq3.21}
		{\it VE}^y(Y;t)&=&\int_{\phi^{-1}(t)}^{\phi^{-1}(\infty)}\phi^2(x)\frac{f(x)}{\tilde{F}(\phi^{-1}(t))}\bigg[\log \bigg\{\frac{f(x)}{\tilde{F}(\phi^{-1}(t))} \bigg\}\bigg]^2dx\nonumber\\
		&~&-E[\phi^2(X)\{2\log \phi^{'}(X) \log \frac{f(X)}{\tilde{F}(\phi^{-1}(t)}-(\log \phi^{'}(X))^2\}|X>\phi^{-1}(t)]\nonumber\\
		&~&-\{H^{\phi}(X;\phi^{-1}(t))+E[\phi(X)\log \phi^{'}(X)|X>\phi^{-1}(t)]\}^2.
	\end{eqnarray}
	Now, the rest of the proof readily follows from (\ref{eq3.21}). The proof when $\phi(\cdot)$ is strictly decreasing is analogous.   	 
\end{proof}

Below, we provide an example for the illustration of Theorem \ref{th3.4}.
\begin{example}\label{ex3.3*}
	Let $X$ be a random variable with CDF $F(x)=1-e^{-\lambda x},~ x>0$ and $\lambda>0$. We consider the transformation $Y=\phi(X)=X^2$. Here, $\phi(\cdot)$ is strictly increasing. The random variable $Y$ follows Weibull distribution with CDF $G(x)=1-e^{-\lambda \sqrt{x}},~x>0,~\lambda>0.$  Now, using Theorem \ref{th3.4}, we obtain the WRVE of $Y$ as
	\begin{eqnarray*}
		{\it VE}^y(Y;t)&=&\frac{12}{\lambda}(\log \lambda+\lambda \sqrt{t}-5)^2+2\lambda t\sqrt{t}(3-\log \lambda-\lambda \sqrt{t})\left(t+\frac{5}{\lambda}\sqrt{t}+\frac{20}{\lambda^2}\right)\\
		&~&-\left\{(\log \lambda+\lambda \sqrt{t}-3)\left(t+\frac{2\sqrt{t}}{\lambda}+\frac{2}{\lambda^2}\right)-\lambda t\sqrt{t}\right\}^2+Var[X^2\log (2X)|X>\sqrt{t}]\\
		&~& -2\left\{(\log \lambda+\lambda \sqrt{t}-3)\left(t+\frac{2\sqrt{t}}{\lambda}+\frac{2}{\lambda^2}\right)-\lambda t\sqrt{t}\right\}E[X^2\log (2X)|X>\sqrt{t}]\\
		&~&+(\log \lambda+\lambda \sqrt{t})^{2}\left(t^2+\frac{4t\sqrt{t}}{\lambda}\right)+\frac{60}{\lambda^4}(1+\lambda \sqrt{t})^2+\left(\lambda^2 t^3+\frac{60}{\lambda^4}\right)\\
		&~&-2E\left[X^4\log (2X)\log \frac{\lambda e^{-\lambda X}}{e^{-\lambda\sqrt{t}}}|X>\sqrt{t}\right],
	\end{eqnarray*}
which has been depicted in Figure $8$ with respect to $\lambda$, for different values of $t$.
\end{example}

\begin{figure}[htbp!]
	\centering
	\includegraphics[width=14cm,height=8cm]{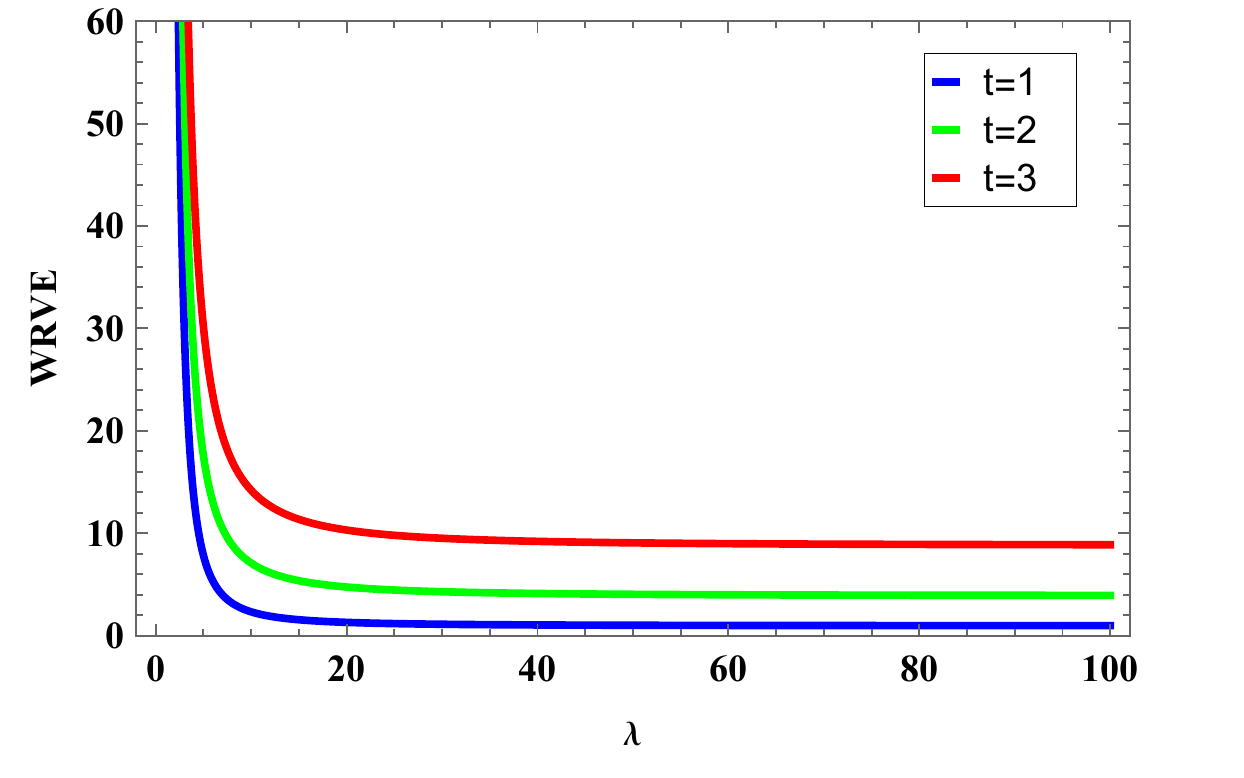}
	\caption{Graphs of the WRVE with respect to $\lambda$ for $t=1,2,5$  as in Example \ref{ex3.3*}}.
	\label{fig:PdfofIGLFRD*}
\end{figure}

Next, we study WRVE under affine transformations.
 \begin{corollary}\label{cor3.3}
	Suppose $X$ is a non-negative absolutely continuous random variable. Further, suppose $Y=aX+b,$ with $a>0$ and $b\geq0$. Then, 
	\begin{eqnarray*}
	{\it VE}^y(Y;t)&=& {\it VE}^{w_1}\left(X;\frac{t-b}{a}\right)
		+(\log a)^2E\bigg[aX+b|X>\frac{t-b}{a}\bigg]\bigg\{1-E\bigg[aX+b|X>\frac{t-b}{a}\bigg]\bigg\}\\
		&~&-2\log aH^{w_1}\left(X;\frac{t-b}{a}\right)\bigg\{1+E\bigg[aX+b|X>\frac{t-b}{a}\bigg]\bigg\},
	\end{eqnarray*}
	where ${\it VE}^{w_1}(X;\frac{t-b}{a})$ and $H^{w_1}(X;\frac{t-b}{a})$ are the WRVE and WRSE, respectively, with weight function $w_{1}(x)=ax+b$.
\end{corollary}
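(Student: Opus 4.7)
The plan is to invoke Theorem \ref{th3.4} directly, with the special choice $\phi(x)=ax+b$. Since $a>0$, $\phi$ is strictly increasing, continuous and differentiable, so the strictly-increasing branch of (\ref{eq3.20}) applies; hence I only need to compute the four building blocks appearing there and simplify, using that $\phi$ is affine.

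First I would record the auxiliary quantities: $\phi'(x)=a$ is constant, $\phi^{-1}(t)=\frac{t-b}{a}$, and with the weight $w_{1}(x)=ax+b=\phi(x)$ we obtain
\begin{equation*}
\eta_{1}(x)=\phi(x)\log\phi'(x)=(ax+b)\log a = w_{1}(x)\log a.
\end{equation*}
Consequently the two ``${\phi}$-weighted'' quantities appearing in Theorem \ref{th3.4} are exactly the $w_{1}$-weighted objects from the corollary: ${\it VE}^{\phi}(X;\phi^{-1}(t))={\it VE}^{w_{1}}(X;\frac{t-b}{a})$ and $H^{\phi}(X;\phi^{-1}(t))=H^{w_{1}}(X;\frac{t-b}{a})$.

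Next I would substitute $\eta_{1}(x)=w_{1}(x)\log a$ into the conditional expectation, variance, and cross-term in (\ref{eq3.20}). Because $\log a$ is a constant, these reduce to
\begin{equation*}
E[\eta_{1}(X)\mid X>\tfrac{t-b}{a}]=\log a\,E\!\left[aX+b\,\big|\,X>\tfrac{t-b}{a}\right],
\end{equation*}
\begin{equation*}
\mathrm{Var}[\eta_{1}(X)\mid X>\tfrac{t-b}{a}]=(\log a)^{2}\mathrm{Var}\!\left[aX+b\,\big|\,X>\tfrac{t-b}{a}\right],
\end{equation*}
and, for the last term, $E[\phi(X)\eta_{1}(X)\log\frac{f(X)}{\tilde F(\phi^{-1}(t))}\mid X>\phi^{-1}(t)]=\log a\cdot E[w_{1}^{2}(X)\log\frac{f(X)}{\tilde F((t-b)/a)}\mid X>\frac{t-b}{a}]$, which is recognized (up to the paper's sign convention) as a WRSE with weight $w_{1}^{2}$. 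The final step is purely algebraic: substitute these four simplifications into (\ref{eq3.20}), factor $\log a$ and $(\log a)^{2}$ in front, and regroup the conditional expectation/variance of $aX+b$ together with the two WRSE contributions to land on the form displayed in the statement.

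The only nontrivial point is bookkeeping in that last step: keeping the $\log a$ and $(\log a)^{2}$ factors, the factors involving $E[aX+b\mid X>\frac{t-b}{a}]$, and the two WRSE pieces (weight $w_{1}$ and weight $w_{1}^{2}$) lined up so that they combine into the compact form $-2\log a\,H^{w_{1}}(\cdot)\{1+E[aX+b\mid\cdot]\}$ and $(\log a)^{2}E[aX+b\mid\cdot]\{1-E[aX+b\mid\cdot]\}$. No nontrivial integrations are needed; the entire argument is a specialization of Theorem \ref{th3.4}, and the ``hard part'' is only making sure that all four constituents are translated correctly from $\phi$-notation to $w_{1}$-notation before collecting terms.
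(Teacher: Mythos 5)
Your overall strategy---specializing Theorem \ref{th3.4} to $\phi(x)=ax+b$---is a legitimate route, essentially equivalent to the paper's own (omitted) direct computation from $g(y)=\frac{1}{a}f\big(\frac{y-b}{a}\big)$, and your identification of the ingredients is right: $\phi^{-1}(t)=\frac{t-b}{a}$, $\eta_1(x)=(ax+b)\log a$, the conditional mean and variance reductions, and the fact that the cross term becomes $\log a$ times a residual entropy with weight $w_1^2(x)=(ax+b)^2$. The genuine gap is the last step, which you declare ``purely algebraic'' and never carry out: the asserted regrouping does not produce the displayed formula. Substituting your four simplifications into (\ref{eq3.20}) (with the sign convention of (\ref{eq1.3}) for the weighted entropies, which is the one under which Theorem \ref{th3.4} matches its own derivation (\ref{eq3.21}) and Proposition 2.1) gives the residual analogue of Proposition 2.1, namely
\begin{align*}
{\it VE}^y(Y;t) ={}& {\it VE}^{w_1}\Big(X;\tfrac{t-b}{a}\Big) + (\log a)^2\, Var\Big[aX+b \,\Big|\, X>\tfrac{t-b}{a}\Big]\\
& + 2\log a\,\Big\{H^{w_1^2}\Big(X;\tfrac{t-b}{a}\Big) - H^{w_1}\Big(X;\tfrac{t-b}{a}\Big)\, E\Big[aX+b \,\Big|\, X>\tfrac{t-b}{a}\Big]\Big\}.
\end{align*}
This expression contains the conditional second moment of $aX+b$ (through the conditional variance) and the WRSE with weight $w_1^2$, and no regrouping can turn it into the terms $(\log a)^2E[aX+b\mid\cdot]\{1-E[aX+b\mid\cdot]\}$ and $-2\log a\,H^{w_1}(\cdot)\{1+E[aX+b\mid\cdot]\}$ of the statement: equality for all $X$, $a$, $b$, $t$ would force $E[(aX+b)^2\mid\cdot]=E[aX+b\mid\cdot]$ and $H^{w_1^2}(\cdot)=-H^{w_1}(\cdot)$, which fail already for an exponential $X$ with $a=2$, $b=0$.

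In fact the printed corollary itself looks misstated: letting $t\downarrow b$ it must reduce to Proposition 2.1, and its right-hand side does not, whereas the display above does. So a faithful completion of your argument would have ended at the formula above and flagged the discrepancy with the statement, rather than asserting that the bookkeeping ``lands on the form displayed.'' The warning sign is already inside your own write-up: you correctly recognize that a weight-$w_1^2$ WRSE appears, yet the target formula contains only $H^{w_1}$; that term cannot be absorbed by factoring $\log a$ and collecting conditional expectations, so the final step as described would fail.
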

\begin{proof}
	The CDF and PDF of $Y$ are respectively given by $G(x)=F(\frac{x-b}{a})$ and $g(x)=\frac{1}{a}f(\frac{x-b}{a})$, $x>b$. Now, using these, the proof of the corollary follows after some simplification, and thus it is omitted.
\end{proof}

An application of Corollary \ref{cor3.3} is provided in the following example.
\begin{example}\label{ex3.3}
	Suppose $X$ is a random variable following Pareto-I distribution with CDF $F(x)=1-(\frac{\alpha}{x})^\beta,~x>\alpha>0,~\beta>0$. Consider a transformation $Y=X-b$, $b>0.$ Then, for $\alpha=1$ and $\beta=2$, we get from Corollary \ref{cor3.3} as
	\begin{eqnarray*}
	{\it VE}^y(Y;t)&=&\log (3(t+b)^3)\{(3t^2+3bt+b^2)\log (3(t+b))+\frac{8}{3}(9t^2+9bt+b^2)\}\\
		&~&+16(3t^2+3bt+b^2)(\log (t+b))^2+\frac{16}{3}(18t^2+27tb+11b^2)\log (t+b)\\
		&~&-\{2(3t+b)\log (t+b)+\frac{1}{2}(3t+b)\log(3(t+b)^3)+(3t+\frac{5b}{3})\}^2\\
		&~&+\frac{8}{9}(108t^2+189tb+83b^2)-8\log (3(t+b)^3) \log(t+b)(3t^2+3bt+b^2),
	\end{eqnarray*} 
which has been plotted in Figure $9$ with respect to $b.$ Here, we have considered three different choices of $t.$
\end{example}

\begin{figure}[htbp!]\label{fig3}
	\centering
	\includegraphics[width=14cm,height=8cm]{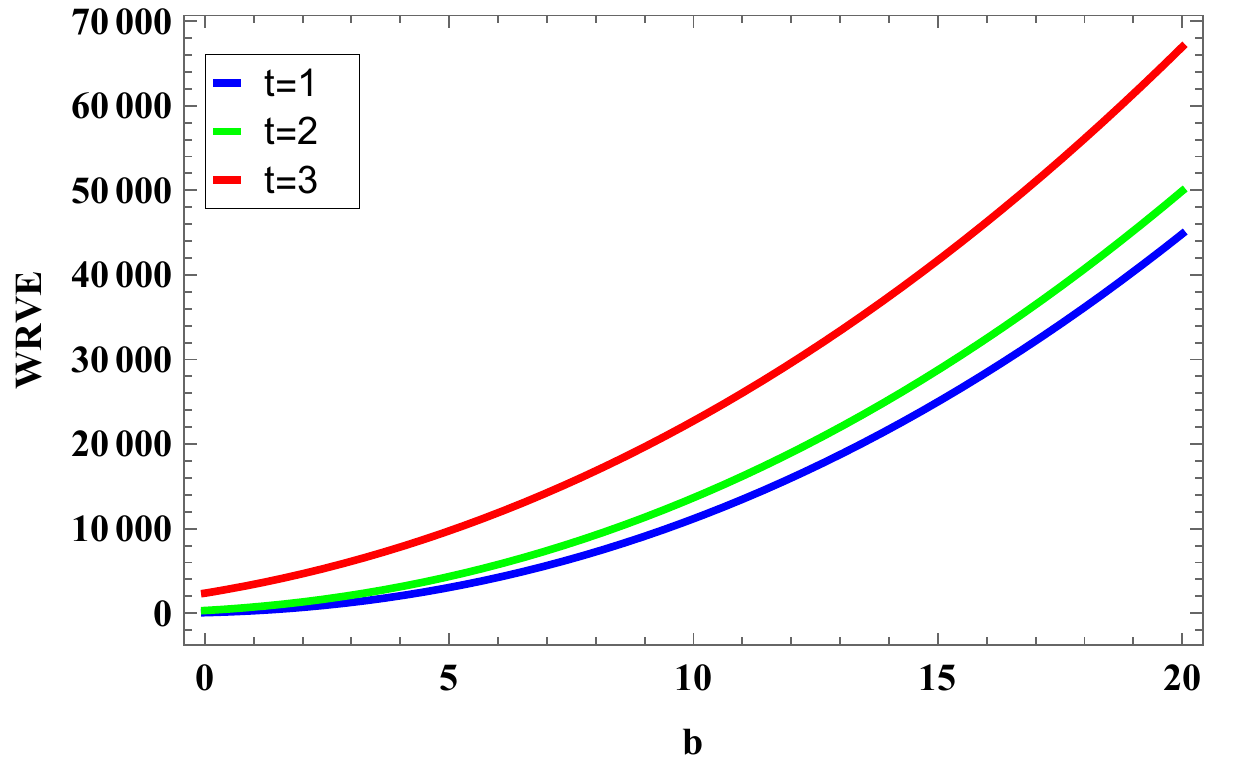}
	\caption{Graphs of the WRVE with respect to $b$ for $t=1,2,5$ as in Example \ref{ex3.3}}.
	\label{fig:PdfofIGLFRD*}
\end{figure}

It is important to see the role of various uncertainty measures in real-life problems. \cite{burnhan2002model} suggested that a distribution having more information should be preferred. Here, we present effectiveness of the proposed measure through the comparison of WRVE given in $(\ref{eq3.1})$ with  RVE for exponential and uniform distributions. We recall that the RVE was introduced by \cite{di2021analysis}. The authors have also explored various properties of this measure.
\begin{itemize}
	\item [(a)]
	In Figure $10,$ we plot the WRE of exponential distribution obtained in Example \ref{ex3.1}$(ii)$ and the RVE of exponential distribution, which  is equal to $1$. From Figure $10$, it is observed that the graph of WRVE covers larger area than that of the RVE, for different values of $\lambda=1,5,7,12$. 
	\begin{figure}[htbp!]\label{fig3}
		\centering
		\includegraphics[width=14cm,height=8cm]{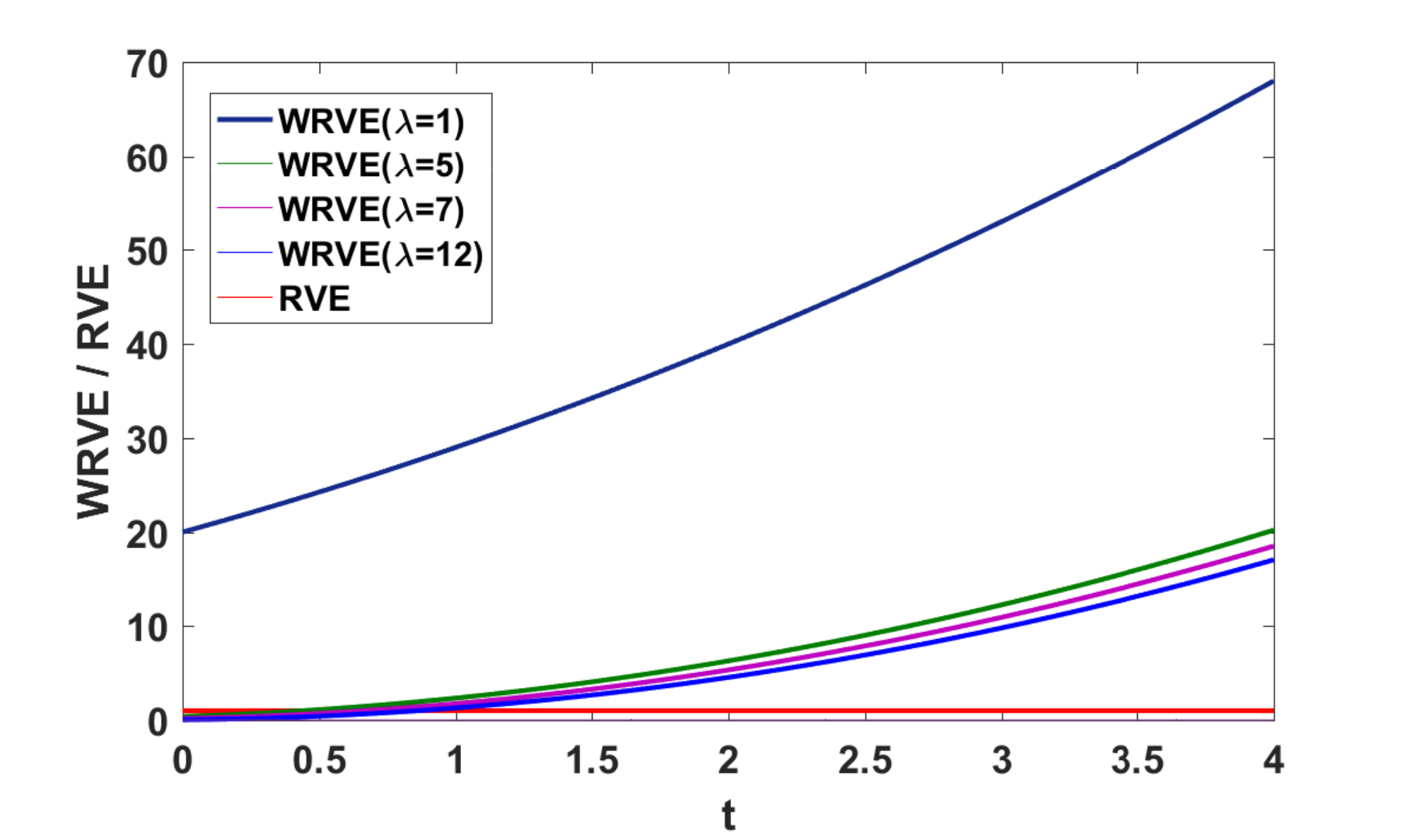}
		\caption{Graphs of the WRVE with respect to $t$ and RVE for different values of $\lambda$.}
		\label{fig:PdfofIGLFRD*}
	\end{figure}

	\item [(b)]
	The graphs of WRVE and RVE for uniform distribution in $(0,b)$ as in Example \ref{ex3.1}$(i)$ are plotted in Figure $11$ for different choices of $b$. Note that the RVE for uniform distribution in $(0,b)$ is zero. From Figures $11(a-d),$ we observe that the WRVE covers larger area than that of the RVE. 
	
	\begin{figure}[h!]
		\begin{center}
			\subfigure[]{\label{c1}\includegraphics[height=1.8in]{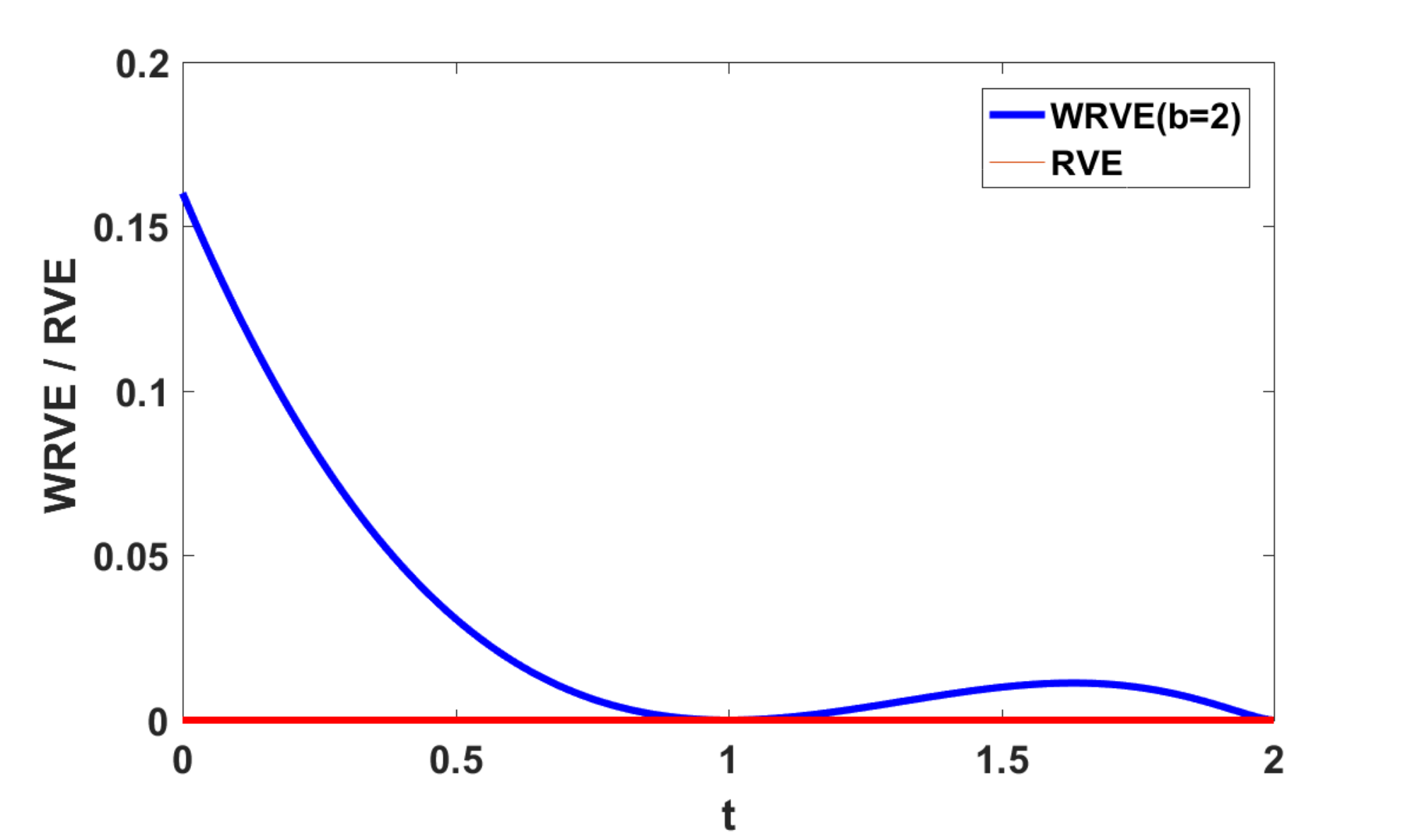}}
			\subfigure[]{\label{c1}\includegraphics[height=1.8in]{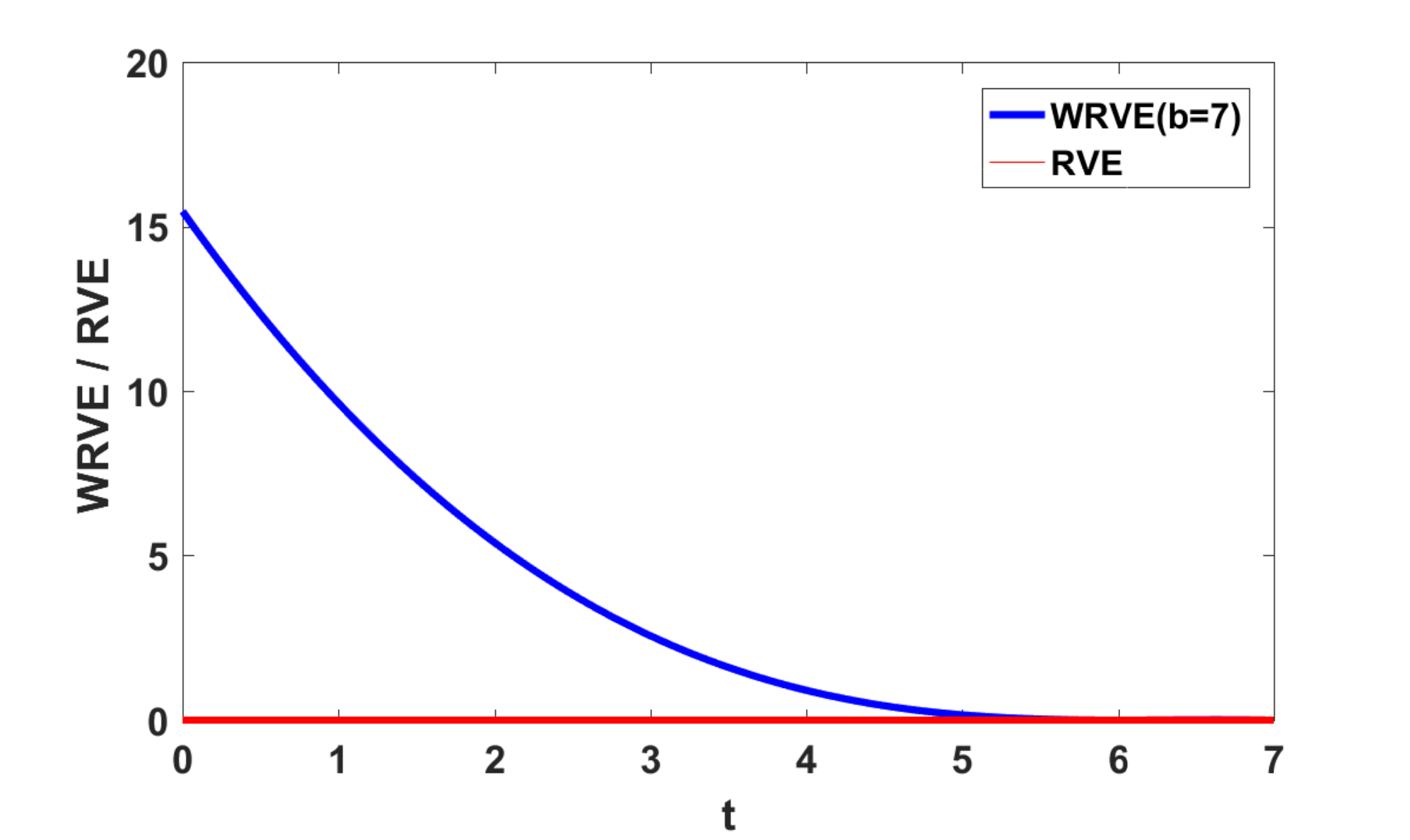}}
			\subfigure[]{\label{c1}\includegraphics[height=1.8in]{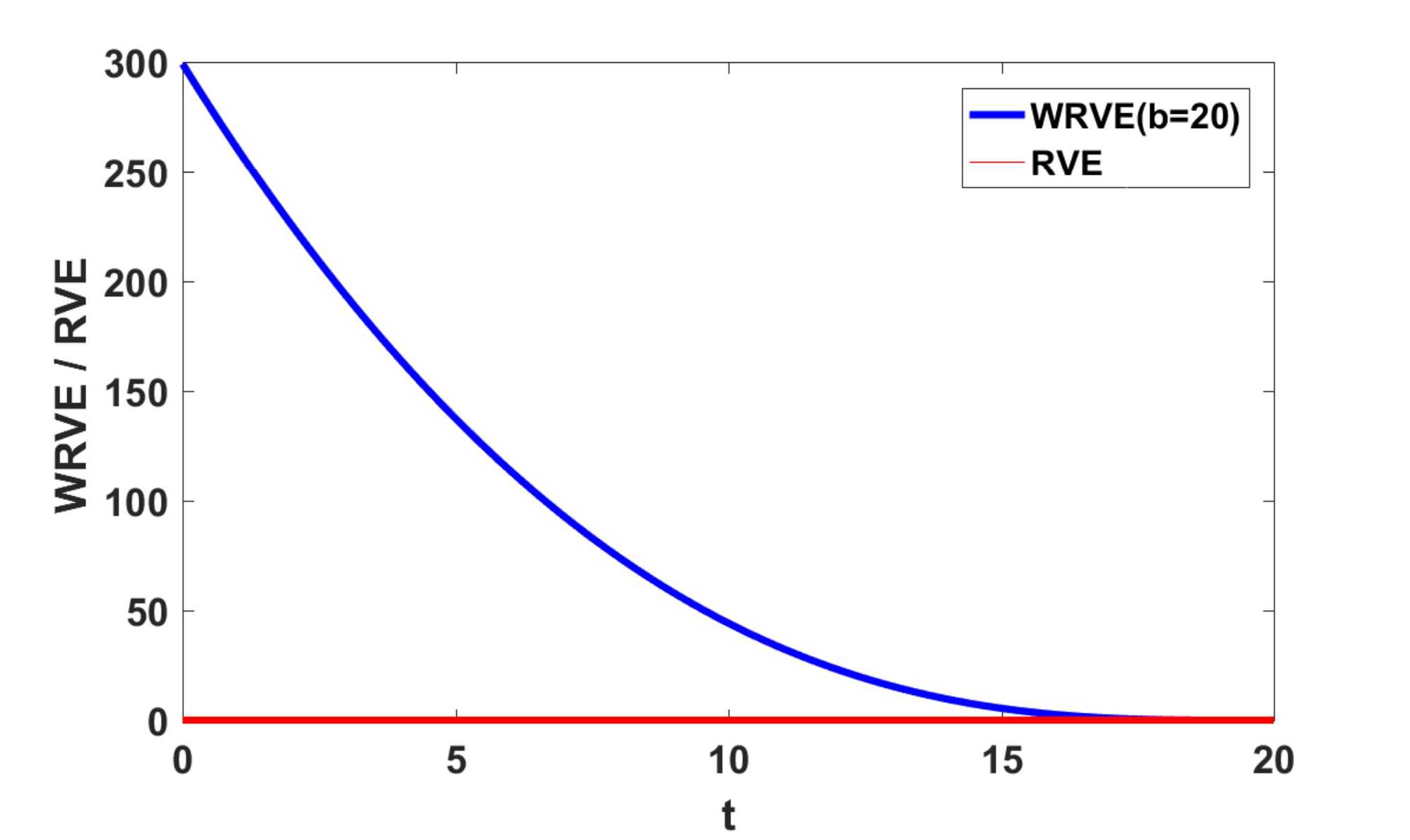}}
			\subfigure[]{\label{c1}\includegraphics[height=1.8in]{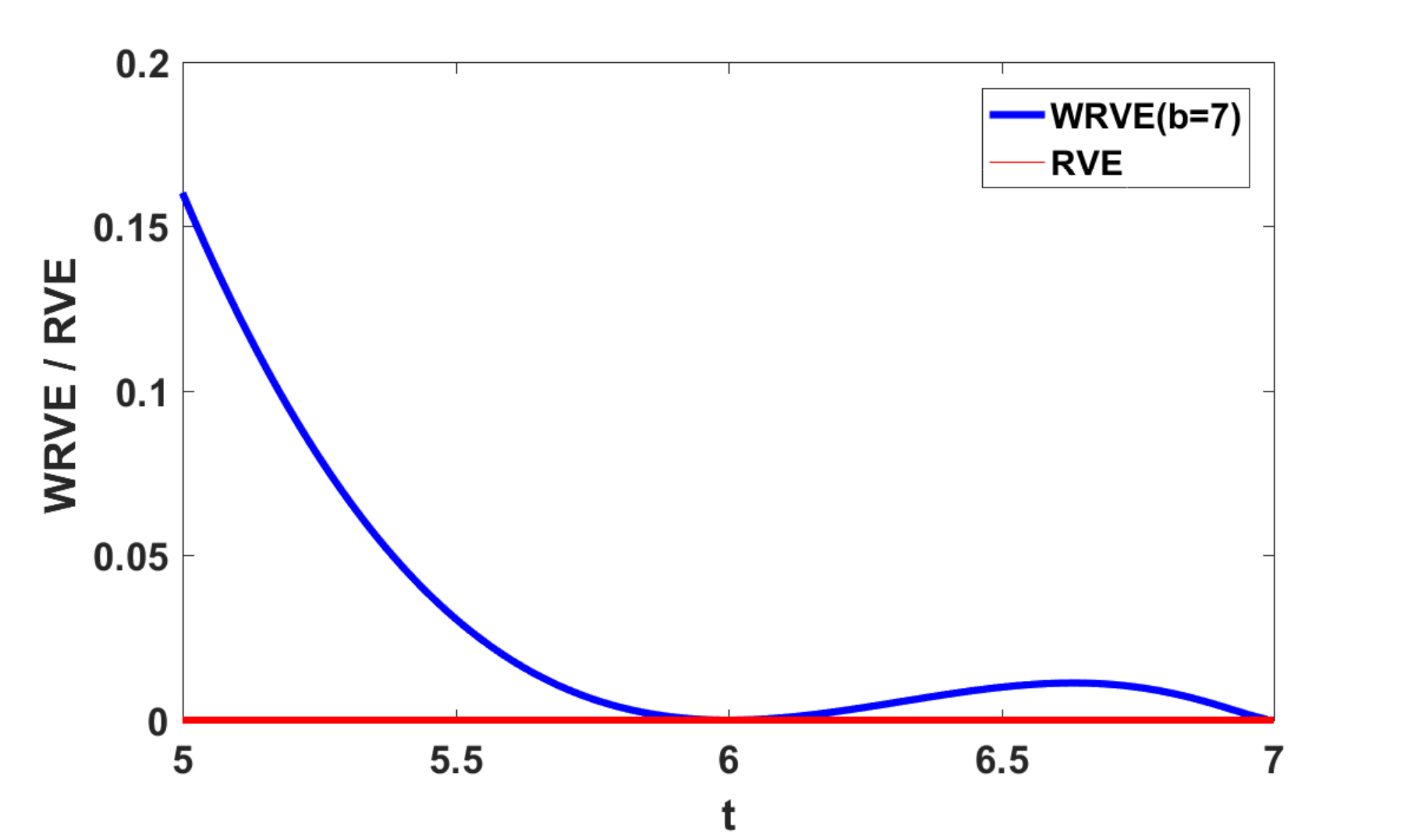}}
			\caption{Graphs of  the WRVE for the uniform distribution in the interval $(0,b)$ with respect to $t$ as in Example \ref{ex3.1}$(i)$ when $(a)$ $b=2$, $(b)$ $b=7$, and $(c)$ $b=20.$ $(d)$ Graph of WRVE for the uniform distribution in the interval $(0,7)$ with respect to $t$ as in Example \ref{ex3.1}$(i)$ when $t\in(5,7).$}
		\end{center}
	\end{figure}
\end{itemize}

\section{Weighted varentropy of coherent systems} 
In this section, we study WVE of coherent systems.  The coherent systems are useful in various real life applications such as industrial systems, telecommunications and oil pipeline systems. For details, please refer to \cite{elsayed2012}. Consider a coherent system with $n$ components. The lifetime of the coherent system is denoted by $T$. The random lifetimes of $n$ components of the coherent system are independent and identically distributed with a common CDF $F(\cdot)$. The CDF of $T$ is expressed  as
\begin{eqnarray}\label{eq4.1}
F_T(x)=q(F(x)),
\end{eqnarray}
where $q:[0,1]\rightarrow[0,1]$ is the distortion function (see \cite{navarro2013stochastic}). Various authors have studied well-established information measures for coherent systems. For instance, see the references by  \cite{toomaj2017some}, \cite{cali2020properties}, and \cite{saha2023extended}. We recall that the distortion function depends on the structure of a system and the copula of the component lifetimes. Further, $q(\cdot)$ is continuous and increasing with $q(0)=0$ and  $q(1)=1$. For details about the distortion function, we refer to \cite{burkschat2018stochastic}. The WVE of  $T$ with PDF $f_T(\cdot)$ is defined as 
\begin{eqnarray}\label{eq4.2}
{\it VE}^x(T)&=&\int_{0}^{\infty}x^2f_T(x)[\log f_T(x)]^2dx-\bigg[\int_{0}^{\infty}xf_T(x)\log f_T(x)dx\bigg]^2\nonumber\\
&=&\int_{0}^{\infty}\phi\big(F_T(x)\big)dx-\bigg[\int_{0}^{\infty}\psi \big(F_T(x)\big)dx\bigg]^2\nonumber\\
&=&\int_{0}^{\infty}\phi\big(q(F(x))\big)dx-\bigg[\int_{0}^{\infty}\psi\big(q(F(x))\big)dx\bigg]^2\nonumber\\
&=& \int_{0}^{1}\frac{\phi\big(q(u)\big)}{f\big(F^{-1}(u)\big)}du-\bigg[\int_{0}^{1}\frac{\psi\big(q(u)\big)}{f\big(F^{-1}(u)\big)}du\bigg]^2,~\mbox{using~u=F(x)},
\end{eqnarray}
where 
\begin{eqnarray}\label{eq4.3}
\phi(u)=f(F^{-1}(u))[F^{-1}(u)\log f(F^{-1}(u))]^2
\end{eqnarray}
 and 
 \begin{eqnarray}\label{eq4.4}
 \psi(u)=F^{-1}(u)f(F^{-1}(u))[-\log f(F^{-1}(u))],
 \end{eqnarray}
 for $0\leq u\leq1$.
Next, we consider a coherent system with lifetime $T=\max\{X_1,X_2\}$, where $X_1$ and $X_2$ denote component lifetimes. It is assumed that $X_1$ and $X_2$ are independent and both follow power  distribution in the interval $(0,a)$.

\begin{example}\label{ex4.1}
	Suppose $X_1$  and $X_2$ are component lifetimes  of a coherent system with a common CDF $F(x)=(\frac{x}{a})^k,~~0<x<a,~k>0$, and consider a parallel system with lifetime $T=X_{2:2}=\max\{X_1,X_2\}$. The distortion function is $q(u)=u^2$, where $0\leq u\leq1$. Using (\ref{eq4.2}), the WVE of the coherent (parallel) system is obtained as
\begin{eqnarray}\label{eq4.5}
	\it{VE}^x(T)&=&\frac{a^2}{(2+3/k)}\bigg[\big(\log (k/a)\big)-\frac{2(1-1/k)}{(2+3/k)}\bigg]^2+\frac{4a^2(1-1/k)^2}{(2+3/k)^3}\nonumber\\
	&~&-\bigg(\frac{a}{2+1/k}\bigg)^2\bigg[\log (k/a)-\frac{2(1-1/k)}{(2+1/k)}\bigg]^2.
\end{eqnarray}
The graphs of the WVE with respect to $k$ and $a$ are plotted in Figures $12(a)$ and $12(b)$, respectively.
\end{example}

\begin{figure}[h!]
	\begin{center}
		\subfigure[]{\label{c1}\includegraphics[height=1.9in]{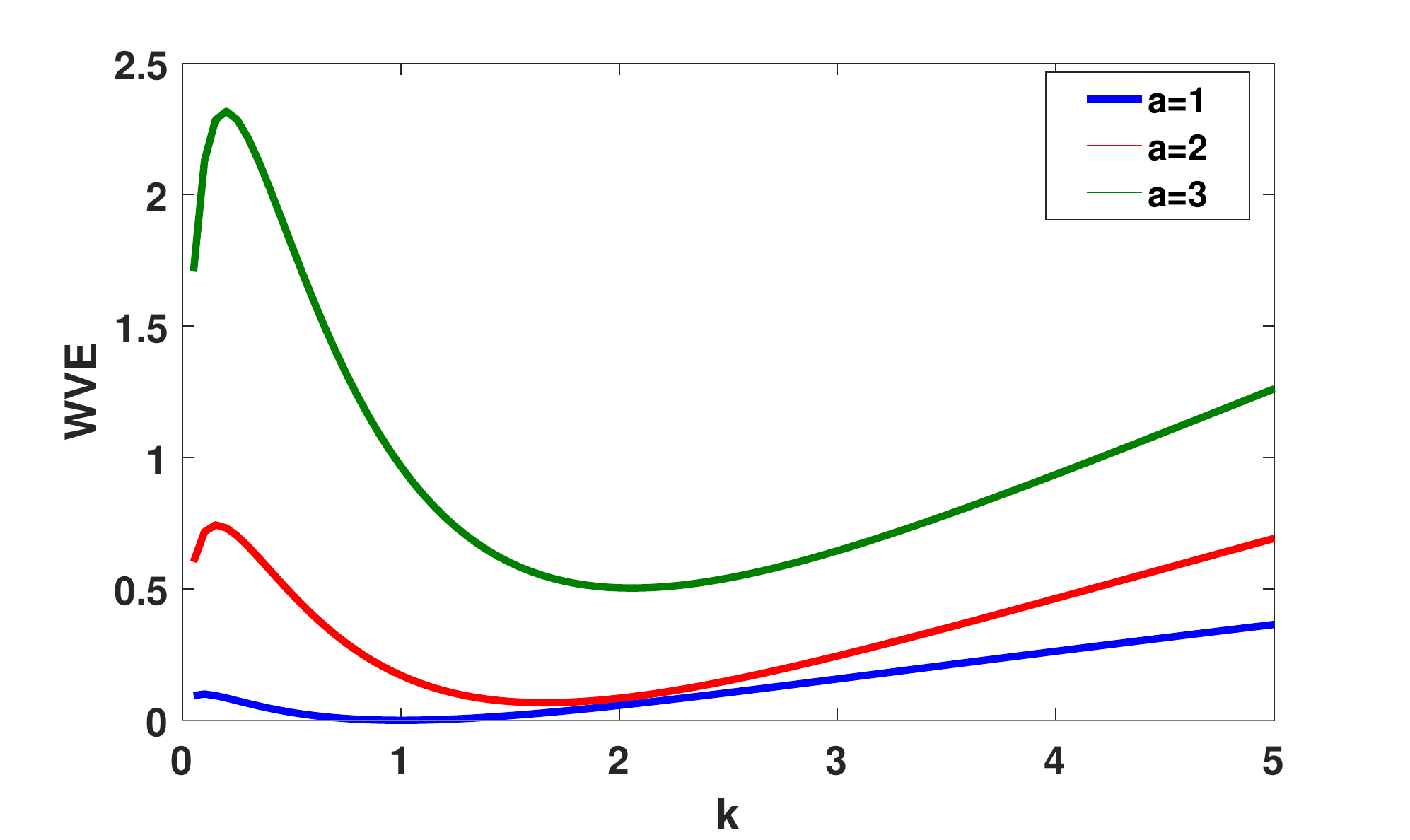}}
		\subfigure[]{\label{c1}\includegraphics[height=1.9in]{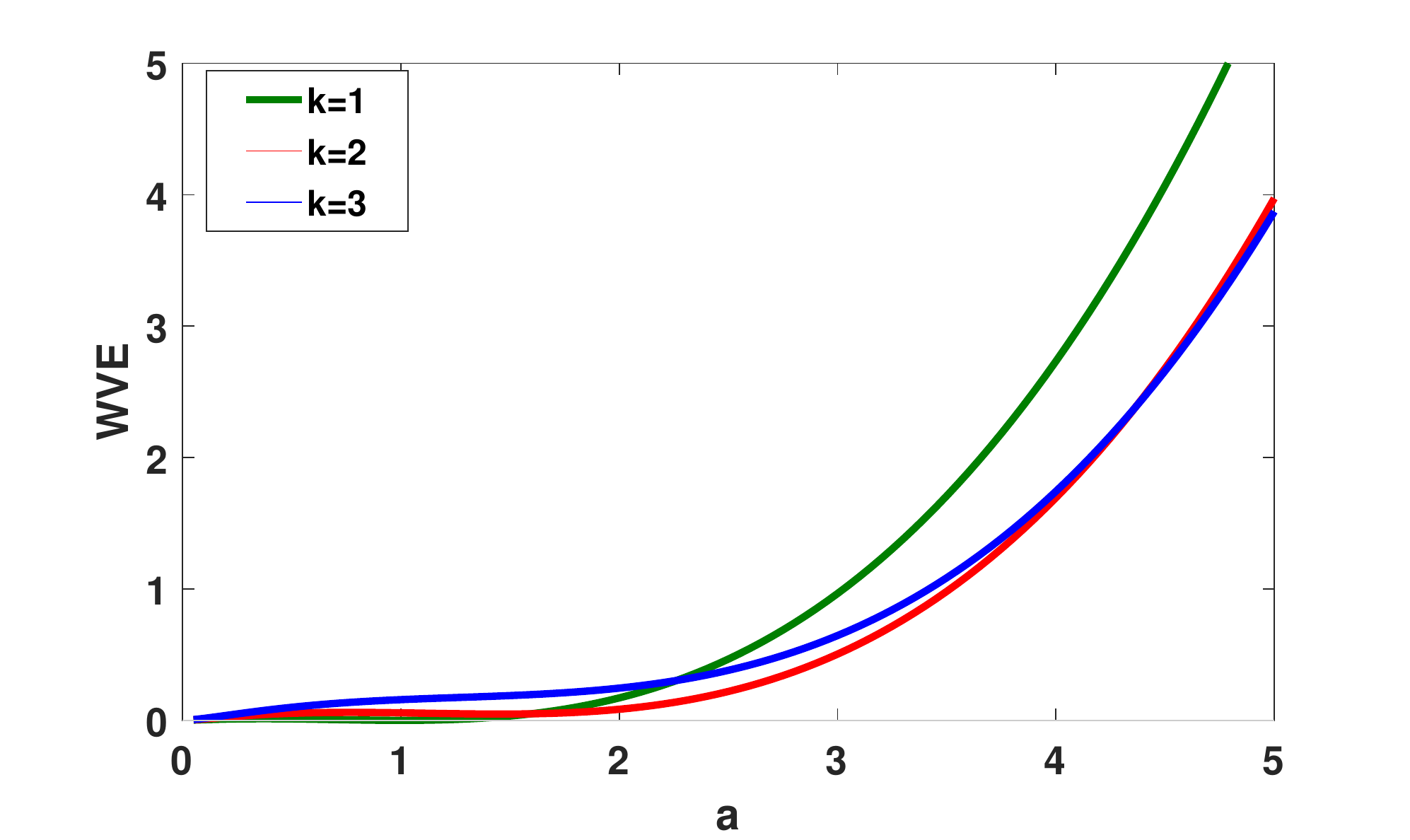}}
		\caption{Graphs of the WVE for the lifetime of a coherent system with respect to $(a)$ $k$ and $(b)$ $a$,  in Example $4.1$.}
	\end{center}
\end{figure}

Now, we obtain relation between the WVEs of the lifetimes of a coherent system and its components. 
\begin{proposition}\label{prop4.1}
	Suppose $T$ is the lifetime of a coherent system with identically distributed components. The distortion function is denoted by $q(\cdot)$. The component lifetime is denoted by $X$ with  CDF $F(\cdot)$ and PDF $f(\cdot)$. Also assume that $\phi(u)=f(F^{-1}(u))[F^{-1}(u)\log f(F^{-1}(u))]^2$ and $\psi(u)=F^{-1}(u)f(F^{-1}(u))[-\log f(F^{-1}(u))],$ for all $u\in[0,1]$. Then, if $\phi\big(q(u)\big)\geq (resp.\leq)\phi(u),$ and $\psi\big(q(u)\big)\leq (resp.\geq)\psi(u),$ for $0\le u\le1$, we have $${\it VE}^x(T)\geq (resp. \leq){\it VE}^x(X).$$
\end{proposition}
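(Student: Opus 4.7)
The plan is to recognise that ${\it VE}^x(X)$ is itself an instance of the distortion-based representation (\ref{eq4.2}) corresponding to the trivial distortion $q_{0}(u)=u$, so the desired comparison reduces to analysing how (\ref{eq4.2}) changes when $q_{0}$ is replaced by a general distortion $q$. Applying (\ref{eq4.2}) twice and subtracting yields
\[
{\it VE}^x(T)-{\it VE}^x(X)=\int_{0}^{1}\frac{\phi(q(u))-\phi(u)}{f(F^{-1}(u))}\,du+\left(\int_{0}^{1}\frac{\psi(u)}{f(F^{-1}(u))}\,du\right)^{\!2}-\left(\int_{0}^{1}\frac{\psi(q(u))}{f(F^{-1}(u))}\,du\right)^{\!2},
\]
so it suffices to verify that this decomposition is non-negative (respectively non-positive) under the stated hypotheses.

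The first, single-integral, piece is immediate: on $(0,1)$ we have $f(F^{-1}(u))>0$, and the hypothesis $\phi(q(u))\geq\phi(u)$ then makes the integrand non-negative pointwise, hence the integral is non-negative. For the difference of squares, I would abbreviate
\[
A=\int_{0}^{1}\frac{\psi(u)}{f(F^{-1}(u))}\,du,\qquad B=\int_{0}^{1}\frac{\psi(q(u))}{f(F^{-1}(u))}\,du,
\]
recognise $A$ and $B$, via the change of variable $u=F(x)$ (respectively $u=F_{T}(x)$ together with $F_{T}=q\circ F$), as the weighted Shannon entropies $H^{x}(X)$ and $H^{x}(T)$, and factor $A^{2}-B^{2}=(A-B)(A+B)$. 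The hypothesis $\psi(q(u))\leq\psi(u)$ with positivity of $1/f(F^{-1}(u))$ gives $A\geq B$ after integration, hence $A-B\geq 0$; combining with non-negativity of $A+B$, which holds in the standard setting where $\psi$ and $\psi\circ q$ are non-negative (as is the case whenever $f\leq 1$), produces $A^{2}-B^{2}\geq 0$ and completes the direction $\geq$.

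The reverse implication is entirely symmetric: reversing both pointwise inequalities flips the sign of the single-integral piece and simultaneously swaps the roles of $A$ and $B$, reversing the difference-of-squares piece as well. I expect the only delicate step to be verifying the sign of $A+B$, since the pointwise control on $\psi\circ q$ versus $\psi$ only pins down the direction of $A-B$; the cleanest way around this is the observation that within the distortion framework at hand $H^{x}(X)$ and $H^{x}(T)$ are of a common sign, so that $(A-B)(A+B)$ inherits the sign of $A-B$. The $\phi$-contribution is by contrast entirely routine.
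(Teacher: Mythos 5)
Your decomposition is exactly the intended route: the paper itself gives no argument (it declares the proof straightforward and omits it), and writing ${\it VE}^x(X)$ as the case $q_0(u)=u$ of (\ref{eq4.2}) and subtracting is surely what the authors have in mind. The $\phi$-part of your argument is correct. The genuine gap is in the difference-of-squares step. From the pointwise hypothesis $\psi(q(u))\le\psi(u)$ and positivity of $1/f(F^{-1}(u))$ you correctly get $A-B\ge 0$, but $A\ge B$ yields $A^2\ge B^2$ only when $A+B\ge 0$ (equivalently $A\ge |B|$). Your proposed fix --- that $A$ and $B$ ``are of a common sign, so that $(A-B)(A+B)$ inherits the sign of $A-B$'' --- is not only unproven but logically backwards: if the common sign is negative, then $A+B<0$ and $(A-B)(A+B)\le 0$, i.e.\ the squared term moves in the \emph{wrong} direction. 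And negativity is a real possibility here, since $\psi(u)=F^{-1}(u)f(F^{-1}(u))[-\log f(F^{-1}(u))]$ is negative wherever $f>1$; for instance, for a component lifetime uniform on $(0,1/2)$ one has $A=H^x(X)=-\tfrac{1}{4}\log 2<0$, so no ``common sign'' argument can be invoked without further assumptions.

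What would actually close the step is an additional sign condition guaranteeing $0\le B\le A$ (in the first direction), e.g.\ $f\le 1$ on the support, so that $\psi\ge 0$ and $\psi\circ q\ge 0$ pointwise, or more generally the assumption $A+B\ge 0$, i.e.\ that the two weighted entropies do not sum to a negative number. Neither condition appears in the statement of the proposition, so as written your proof (and, for that matter, any proof following this naive route) does not go through in full generality; the same issue recurs symmetrically in the reversed direction. You were right to single out the sign of $A+B$ as the delicate point --- but it needs to be added as a hypothesis or verified from extra structure, not dismissed by the common-sign remark.
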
	 	
\begin{proof}
	The proof is straightforward, and thus it is omitted.
\end{proof}

Next, we obtain bound of the WVE of coherent systems in terms of WSE.

\begin{proposition}
	Consider a coherent system as in Proposition \ref{prop4.1}.  Denote $\sup_{u\in[0,1]}\frac{\phi(q(u))}{\phi(u)}=\beta_{1,u},$ where $\phi(u)=f(F^{-1}(u))[F^{-1}(u)\log f(F^{-1}(u))]^2.$ Then, under the assumption in (\ref{eq2.10}), we have
	\begin{eqnarray*}
			VE^x(T)\leq \beta_{1,u} H^{w_2}(X),
		\end{eqnarray*} 
		where $H^{w_2}(X)$ is the WSE with weight $w_2(x)=\alpha x^3+\beta x^2$.
\end{proposition}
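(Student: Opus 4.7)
The plan is to combine the integral representation of $VE^x(T)$ from (\ref{eq4.2}) with the sup-ratio bound on $\phi$, and then reduce the remaining integral to exactly the quantity controlled in Proposition \ref{prop2.2}. First I would start from the formula
\begin{eqnarray*}
{\it VE}^x(T) = \int_{0}^{1}\frac{\phi(q(u))}{f(F^{-1}(u))}\,du - \left[\int_{0}^{1}\frac{\psi(q(u))}{f(F^{-1}(u))}\,du\right]^2,
\end{eqnarray*}
and discard the second (non-positive) term, so the entire problem reduces to bounding the first integral.

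Next, I would invoke the definition of $\beta_{1,u}$ to replace $\phi(q(u))$ by $\beta_{1,u}\phi(u)$ inside the integrand, pull the constant out, and then perform the change of variable $u=F(x)$, $du=f(x)\,dx$, which sends $F^{-1}(u)$ back to $x$ and $f(F^{-1}(u))$ back to $f(x)$. This collapses the integral to
\begin{eqnarray*}
\int_{0}^{1}\frac{\phi(u)}{f(F^{-1}(u))}\,du = \int_{0}^{\infty}\phi(F(x))\,dx = \int_{0}^{\infty} x^{2}f(x)(\log f(x))^{2}\,dx,
\end{eqnarray*}
where the second equality uses the explicit form of $\phi$ in (\ref{eq4.3}).

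The last step is exactly the manipulation appearing in the proof of Proposition \ref{prop2.2}: under the hypothesis (\ref{eq2.10}) one has $0\le -\log f(x)\le \alpha x+\beta$, so
\begin{eqnarray*}
(\log f(x))^{2} \le (\alpha x+\beta)(-\log f(x)),
\end{eqnarray*}
and integrating against $x^{2}f(x)$ yields $\int_{0}^{\infty}x^{2}f(x)(\log f(x))^{2}\,dx \le H^{w_{2}}(X)$ with $w_{2}(x)=\alpha x^{3}+\beta x^{2}$. Chaining the three inequalities gives $VE^x(T)\le \beta_{1,u}H^{w_{2}}(X)$, as required.

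There is no real obstacle here; the only thing to be careful about is the direction of the inequality when dropping $[\int\psi(q(u))/f(F^{-1}(u))\,du]^{2}$ (which is legitimate because it is non-negative and appears with a minus sign), and keeping the sign bookkeeping straight in the $(\log f)^{2}$ estimate. Essentially the result is a direct blend of (\ref{eq4.2}) with the argument already used for Proposition \ref{prop2.2}.
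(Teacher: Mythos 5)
Your proposal is correct and follows essentially the same route as the paper: start from the representation (\ref{eq4.2}), drop the non-positive squared term, bound $\phi(q(u))$ by $\beta_{1,u}\phi(u)$, change variables $u=F(x)$ to recover $\int_0^\infty x^2 f(x)(\log f(x))^2\,dx$, and then apply the estimate $(\log f(x))^2 \le (\alpha x+\beta)(-\log f(x))$ from the proof of Proposition \ref{prop2.2} to obtain $\beta_{1,u}H^{w_2}(X)$. No gaps; this matches the paper's argument step for step.
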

\begin{proof}
From (\ref{eq4.2}), we have
\begin{eqnarray*}
	VE^x(T)&=&\int_{0}^{1}\frac{\phi\big(q(u)\big)}{f\big(F^{-1}(u)\big)}du-\bigg[\int_{0}^{1}\frac{\psi\big(q(u)\big)}{f\big(F^{-1}(u)\big)}du\bigg]^2\\
	&=&\int_{0}^{1}\frac{\phi\big(q(u)\big)}{\phi(u)}\frac{\phi(u)}{f\big(F^{-1}(u)\big)}du-\bigg[\int_{0}^{1}\frac{\psi\big(q(u)\big)}{f\big(F^{-1}(u)\big)}du\bigg]^2\\
	&\leq& \bigg(sup_{u\in[0,1]}\frac{\phi(q(u))}{\phi(u)}\bigg)\int_{0}^{1}\frac{\phi(u)}{f\big(F^{-1}(u)\big)}du\\
	%&=& \beta_{1,u}\int_{-\infty}^{\infty}x^2f(x)[\log f(x)]^2dx\\
	&\leq& \beta_{1,u}\int_{0}^{\infty}x^2(-\alpha x-\beta)f(x)\log f(x)dx\\
	&=&\beta_{1,u} H^{w_2}(X).
\end{eqnarray*}
Hence, the proof is completed.
\end{proof}

Below, we obtain bound of the WVE of a coherent system in terms of the WVE of component lifetime and WSE.
\begin{proposition}
	Consider a coherent system as in Proposition \ref{prop4.1}. Denote $\sup_{u\in[0,1]}\frac{\phi(q(u))}{\phi(u)}=\beta_{1,u}$, where $\phi(u)=f(F^{-1}(u))[F^{-1}(u)\log f(F^{-1}(u))]^2.$ Then,
	$${\it VE}^x(T)\leq \beta_{1,u}[{\it VE}^x(X)+(H^x(X))^2].$$
\end{proposition}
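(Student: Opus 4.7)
The plan is to start from the compact expression of ${\it VE}^x(T)$ given in (\ref{eq4.2}), namely
\begin{eqnarray*}
{\it VE}^x(T)=\int_{0}^{1}\frac{\phi(q(u))}{f(F^{-1}(u))}\,du-\left[\int_{0}^{1}\frac{\psi(q(u))}{f(F^{-1}(u))}\,du\right]^{2},
\end{eqnarray*}
and to observe that the squared term on the right is non-negative, so it can be dropped to obtain the upper bound
\begin{eqnarray*}
{\it VE}^x(T)\leq \int_{0}^{1}\frac{\phi(q(u))}{f(F^{-1}(u))}\,du.
\end{eqnarray*}

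Next, I would multiply and divide the integrand by $\phi(u)$, exactly as in the proof of the previous proposition, and pull out the supremum:
\begin{eqnarray*}
\int_{0}^{1}\frac{\phi(q(u))}{f(F^{-1}(u))}\,du=\int_{0}^{1}\frac{\phi(q(u))}{\phi(u)}\cdot\frac{\phi(u)}{f(F^{-1}(u))}\,du\leq \beta_{1,u}\int_{0}^{1}\frac{\phi(u)}{f(F^{-1}(u))}\,du.
\end{eqnarray*}

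The remaining work is to identify the integral $\int_{0}^{1}\phi(u)/f(F^{-1}(u))\,du$ with the quantity ${\it VE}^x(X)+(H^x(X))^2$. Using the explicit form $\phi(u)=f(F^{-1}(u))\,[F^{-1}(u)\log f(F^{-1}(u))]^{2}$ from (\ref{eq4.3}) and the substitution $u=F(x)$, I get
\begin{eqnarray*}
\int_{0}^{1}\frac{\phi(u)}{f(F^{-1}(u))}\,du=\int_{0}^{\infty}x^{2}f(x)[\log f(x)]^{2}\,dx,
\end{eqnarray*}
which, by the definition of WVE in (\ref{eq2.1}) with $w(x)=x$, equals ${\it VE}^x(X)+(H^x(X))^{2}$ since $H^x(X)=-\int_{0}^{\infty}xf(x)\log f(x)\,dx$ from (\ref{eq1.3}). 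Combining the two displayed estimates then yields the claimed bound.

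There is no real obstacle here: the argument is essentially the one already used in the preceding proposition, with the only extra ingredient being the algebraic identification of the integral as the sum of the squared WSE and the WVE. The step that requires a moment of care is simply not weakening the estimate to the WSE $H^{w_{2}}(X)$ as in the previous proposition, but instead keeping the integral in its raw form $\int_{0}^{\infty}x^{2}f(x)[\log f(x)]^{2}dx$ and recognizing it through (\ref{eq2.1}).
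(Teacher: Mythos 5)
Your proof is correct and is exactly the argument the paper intends: the paper omits the proof as ``easy'' because it is the same supremum trick used in the preceding proposition, combined with the observation that $\int_{0}^{1}\phi(u)/f(F^{-1}(u))\,du=\int_{0}^{\infty}x^{2}f(x)[\log f(x)]^{2}dx={\it VE}^x(X)+(H^x(X))^{2}$ via the substitution $u=F(x)$ and the definitions in (\ref{eq2.1}) and (\ref{eq1.3}). No gaps; dropping the non-negative squared term and bounding $\phi(q(u))/\phi(u)$ by $\beta_{1,u}$ over a non-negative integrand is valid.
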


\begin{proof}
	The proof is easy, and thus it is omitted for brevity.
\end{proof}

 The following proposition provides an upper bound of ${\it VE}^x(T)$ when the PDF of the component lifetimes of a coherent system have a lower bound.
 
\begin{proposition}\label{pro4.2}
	Consider a coherent system in Proposition \ref{prop4.1}.  Let the components have  PDF $f(x)$ with support $S$, such that  
	$f(x)\geq L>0 $ for all $x\in S.$ Then,
	${\it VE}^x(T)\leq \frac{1}{L}\int^1_0\phi\big(q(u)\big)du,$ where $\phi(u)=f(F^{-1}(u))[F^{-1}(u)\log f(F^{-1}(u))]^2.$
\end{proposition}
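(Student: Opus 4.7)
The plan is to start directly from the representation of $VE^x(T)$ given in (\ref{eq4.2}), namely
\begin{equation*}
VE^x(T) = \int_{0}^{1}\frac{\phi\bigl(q(u)\bigr)}{f\bigl(F^{-1}(u)\bigr)}\,du - \Bigl[\int_{0}^{1}\frac{\psi\bigl(q(u)\bigr)}{f\bigl(F^{-1}(u)\bigr)}\,du\Bigr]^{2},
\end{equation*}
and bound the right-hand side by a single integral expression. Since the second term on the right is the square of a real quantity, it is non-negative, so dropping it gives the immediate inequality
\begin{equation*}
VE^x(T) \leq \int_{0}^{1}\frac{\phi\bigl(q(u)\bigr)}{f\bigl(F^{-1}(u)\bigr)}\,du.
\end{equation*}

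Next, I would exploit the hypothesis $f(x)\geq L>0$ for all $x\in S$. For every $u\in[0,1]$, the quantile $F^{-1}(u)$ lies in $S$ (or on its closure), so $f(F^{-1}(u))\geq L$, whence $1/f(F^{-1}(u))\leq 1/L$. Observing that $\phi(u)=f(F^{-1}(u))[F^{-1}(u)\log f(F^{-1}(u))]^{2}\geq 0$ (as it is a product of a non-negative density value and a square), and that $q$ maps $[0,1]$ into $[0,1]$, the integrand $\phi(q(u))/f(F^{-1}(u))$ is non-negative and can be majorized pointwise by $\phi(q(u))/L$. Integrating yields
\begin{equation*}
\int_{0}^{1}\frac{\phi\bigl(q(u)\bigr)}{f\bigl(F^{-1}(u)\bigr)}\,du \leq \frac{1}{L}\int_{0}^{1}\phi\bigl(q(u)\bigr)\,du,
\end{equation*}
which combined with the previous display gives the claimed bound.

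There is essentially no serious obstacle: the argument is a two-line chain of trivial inequalities (discarding a non-negative square and applying a uniform pointwise lower bound on $f$). The only mild point to verify is the non-negativity of the integrand $\phi(q(u))/f(F^{-1}(u))$, which is needed to justify pulling $1/L$ out of the integral in the direction of an upper bound; this is immediate from the definition of $\phi$ given in (\ref{eq4.3}). Hence the proof is short and direct, and I would present it in just these two steps.
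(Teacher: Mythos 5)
Your argument is correct and is exactly what the paper intends: its proof of this proposition consists only of the remark that the bound ``readily follows from (\ref{eq4.2})'', and the two steps you supply (dropping the non-negative squared term and majorizing $1/f(F^{-1}(u))$ by $1/L$ using $\phi\geq 0$) are precisely the details being suppressed. No differences or gaps to report.
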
 
\begin{proof}
	Using the assumptions made, the proof readily follows from (\ref{eq4.2}).
\end{proof}

We consider the following example for the illustration of Proposition \ref{pro4.2}.
\begin{example}
	Supose $T$ indicates the lifetime of a coherent system with independent and identically distributed components following  log-uniform distribution with distribution fnction $F(x)=\frac{\log x-\log \alpha}{\log \beta-\log \alpha},~0<\alpha\le x \le \beta$. Then, $L=\frac{1}{\beta\log(\beta/\alpha)},$ and as a result from Proposition \ref{pro4.2}, we get
	$${\it VE}^x(T)\leq \beta\log\left(\frac{\beta}{\alpha}\right)\int^1_0\phi\big(q(u)\big)du,$$
	where $\phi(u)=f(F^{-1}(u))[F^{-1}(u)\log f(F^{-1}(u))]^2.$
\end{example}   

\section{WRVE for proportional hazard rate model}
In this section, we study WRVE for the proportional hazard rate (PHR) model. 
Suppose $X$ and $Y$ are two non-negative random variables with survival functions $\tilde F(\cdot)$ and $\tilde G(\cdot),$ respectively. Then, $X$ and $Y$ have PHR model if the survival function of $Y$ can be written as 
\begin{eqnarray}\label{eq5.1}
\tilde {G}(x)=[\tilde {F}(x)]^a, ~x>0,~a>0.
\end{eqnarray}
The PDF of $Y$ is
\begin{eqnarray}\label{eq5.1*}
g(x)=a[\tilde {F}(x)]^{a-1}f(x),~x>0,
\end{eqnarray}
where  $f(x)=-\frac{d}{dx}\tilde {F}(x)$ is the (baseline) PDF of $X$. The PHR model was introduced by \cite{cox1959analysis}. Later, several researchers studied the PHR model. Recently, \cite{parsa2018analysis} studied a characterization in terms of the Gini type index of PHR model.
Here, we introduce WRVE for the PHR model. The cumulative hazard rate function $\Lambda^{*}(x)$ of $Y$ is given by
\begin{eqnarray}\label{eq5.2}
\Lambda^{*}(x)=-\log \tilde{G}(x)=-\log [\tilde{F}(x)]^a=a\Lambda(x),~x>0,
\end{eqnarray}
where $\Lambda(x)=-\log \tilde F(x)$ is the cumulative hazard rate function.
The WRSE of $Y$ can be expressed as
\begin{eqnarray}\label{eq5.3}
H^x(Y;t)&=& -\int_{t}^{\infty}x\frac{g(x)}{\tilde {G}(t)}\log g(x)dx-\int_{t}^{\infty}x\frac{g(x)}{\tilde {G}(t)}\Lambda^{*}(t)dx\nonumber\\
&=& \frac{1}{[\tilde {F}(t)]^a}\bigg\{\int_{0}^{[\tilde{F}(t)]^{a}}\sigma(y:a)dy+a\Lambda(t)\int_{0}^{[\tilde{F}(t)]^{a}}\tilde {F}^{-1}(y^{1/a})dy\bigg\}\nonumber\\
&=&\frac{1}{[\tilde {F}(t)]^a}\int_{0}^{[\tilde{F}(t)]^{a}}\gamma(y:a,t)dy,
\end{eqnarray}
where $y=[\tilde{F}(x)]^a$, $\sigma(y:a)=\tilde {F}^{-1}(y^{1/a})\log \big\{ay^{1-1/a}f[\tilde {F}^{-1}(y^{1/a})]\big\} $ and 
\begin{eqnarray}\label{eq5.5**}
\gamma(y:a,t)=\tilde {F}^{-1}(y^{1/a})\log \big\{a\frac{y^{1-1/a}}{[\tilde{F}(t)]^a}f[\tilde {F}^{-1}(y^{1/a})]\big\}.
\end{eqnarray}
Note that $\tilde {F}^{-1}(y^{1/a})=\sup\{x:\tilde{F}(x)\geq y^{1/a}\}$ is known as the quantile function of $\tilde{F}(\cdot)$. In the following theorem, we evaluate the WRVE for the PHR model.
\begin{theorem}
	Let $Y$ have the survival function given by (\ref{eq5.1}). The WRVE of $Y$ is
	\begin{eqnarray}\label{eq5.4}
	{\it VE}^x(Y;t)&=&\frac{1}{[\tilde{F}(t)]^a}\int_{0}^{[\tilde{F}(t)]^{a}}[\gamma(y:a,t)]^2dy-\frac{1}{[\tilde{F}(t)]^{2a}}\bigg\{\int_{0}^{[\tilde{F}(t)]^{a}}\gamma(y:a,t)dy\bigg\}^2,
	\end{eqnarray}
	where $\gamma(y:a,t)$ is defined in (\ref{eq5.5**}).
\end{theorem}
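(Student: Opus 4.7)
The plan is to start from the defining formula \eqref{eq3.1} of WRVE applied to $Y$, plug in the PHR-model expressions $\tilde G(x)=[\tilde F(x)]^{a}$ and $g(x)=a[\tilde F(x)]^{a-1}f(x)$, and then reduce everything to integrals over $y\in[0,[\tilde F(t)]^{a}]$ by the single substitution $y=[\tilde F(x)]^{a}$. The second (squared) term will be handled by simply invoking the representation of $H^x(Y;t)$ already derived in \eqref{eq5.3}.

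First I would write
\begin{eqnarray*}
{\it VE}^x(Y;t)=\int_{t}^{\infty}\frac{g(x)}{\tilde G(t)}\Bigl(x\log\tfrac{g(x)}{\tilde G(t)}\Bigr)^{2}dx-[H^x(Y;t)]^{2},
\end{eqnarray*}
and concentrate on the first integral. The key substitution $y=[\tilde F(x)]^{a}$ gives $dy=-g(x)\,dx$, with limits $x=t\mapsto y=[\tilde F(t)]^{a}$ and $x=\infty\mapsto y=0$; moreover $x=\tilde F^{-1}(y^{1/a})$ and $g(x)=ay^{1-1/a}f[\tilde F^{-1}(y^{1/a})]$. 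Substituting into the logarithm term one obtains
\begin{eqnarray*}
x\log\tfrac{g(x)}{\tilde G(t)}=\tilde F^{-1}(y^{1/a})\log\Bigl\{a\,\tfrac{y^{1-1/a}}{[\tilde F(t)]^{a}}\,f[\tilde F^{-1}(y^{1/a})]\Bigr\}=\gamma(y:a,t),
\end{eqnarray*}
exactly the function defined in \eqref{eq5.5**}. Dividing by $\tilde G(t)=[\tilde F(t)]^{a}$ (which is a constant in $x$) and collecting the minus sign from the reversed limits yields
\begin{eqnarray*}
\int_{t}^{\infty}\frac{g(x)}{\tilde G(t)}\Bigl(x\log\tfrac{g(x)}{\tilde G(t)}\Bigr)^{2}dx=\frac{1}{[\tilde F(t)]^{a}}\int_{0}^{[\tilde F(t)]^{a}}[\gamma(y:a,t)]^{2}\,dy,
\end{eqnarray*}
which is the first term of the claim. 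For the second term I would directly square the expression \eqref{eq5.3} for $H^x(Y;t)$, producing $\frac{1}{[\tilde F(t)]^{2a}}\{\int_{0}^{[\tilde F(t)]^{a}}\gamma(y:a,t)\,dy\}^{2}$, and subtract.

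There is no real obstacle here: the statement is essentially a rewriting of \eqref{eq3.1} under the PHR parametrization, and the only point that requires some care is keeping track of the three appearances of $\tilde F^{-1}(y^{1/a})$, the exponent $1-1/a$ produced by $[\tilde F(x)]^{a-1}$, and the orientation-reversing sign from $dy=-g(x)dx$. Once the substitution is executed cleanly, the two pieces assemble into \eqref{eq5.4}, completing the proof.
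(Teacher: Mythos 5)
Your proposal is correct and follows essentially the same route as the paper: both start from the definition \eqref{eq3.1} applied to $Y$ under the PHR model, perform the substitution $y=[\tilde F(x)]^{a}$ (so $dy=-g(x)\,dx$) to convert the first integral into $\frac{1}{[\tilde F(t)]^{a}}\int_{0}^{[\tilde F(t)]^{a}}[\gamma(y:a,t)]^{2}dy$, and obtain the squared term by invoking the representation \eqref{eq5.3} of $H^x(Y;t)$. The only cosmetic difference is that the paper first expands the transformed integrand into $\sigma(y:a)$ plus cumulative-hazard terms before recombining them into $\gamma(y:a,t)$, whereas you identify $\gamma(y:a,t)$ directly.
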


\begin{proof}
	From (\ref{eq3.1}) and using (\ref{eq5.1}), the WRVE of $Y$ is
	\begin{eqnarray}\label{eq5.5*}
	{\it VE}^x(Y;t)=\int_{t}^{\infty}\frac{g(x)}{\tilde {G}(t)}\bigg(x\log \frac{g(x)}{\tilde {G}(t)} \bigg)^2dx-[H^x(Y;t)]^2.
	\end{eqnarray}
	Now, using $y=[\tilde{F}(x)]^a$,
	\begin{eqnarray}\label{eq5.6*}
	\int_{t}^{\infty}\frac{g(x)}{\tilde {G}(t)}\bigg(x\log \frac{g(x)}{\tilde {G}(t)} \bigg)^2dx&=&\frac{1}{[\tilde{F}(t)]^a}\bigg\{\int_{0}^{[\tilde{F}(t)]^{a}}[\sigma(y:a)]^2dy\nonumber\\
	&~&+2\Lambda^{(a)}(t)\int_{0}^{[\tilde{F}(t)]^{a}}\tilde{F}^{-1}(y^{1/a})\sigma(y:a)dy\nonumber\\
	&~& +(\Lambda^{(a)}(t))^2\int_{0}^{[\tilde{F}(t)]^{a}}[\tilde{F}^{-1}(y^{1/a})]^2dy\bigg\}\nonumber\\
	&=& \frac{1}{[\tilde{F}(t)]^a}\int_{0}^{[\tilde{F}(t)]^{a}}[\gamma(y:a,t)]^2dy.
	\end{eqnarray}
	Finally, using (\ref{eq5.3}) and (\ref{eq5.6*}) in (\ref{eq5.5*}), the theorem is established. 
\end{proof}

\begin{figure}[h!]
	\begin{center}
		\subfigure[]{\label{c1}\includegraphics[height=1.26in]{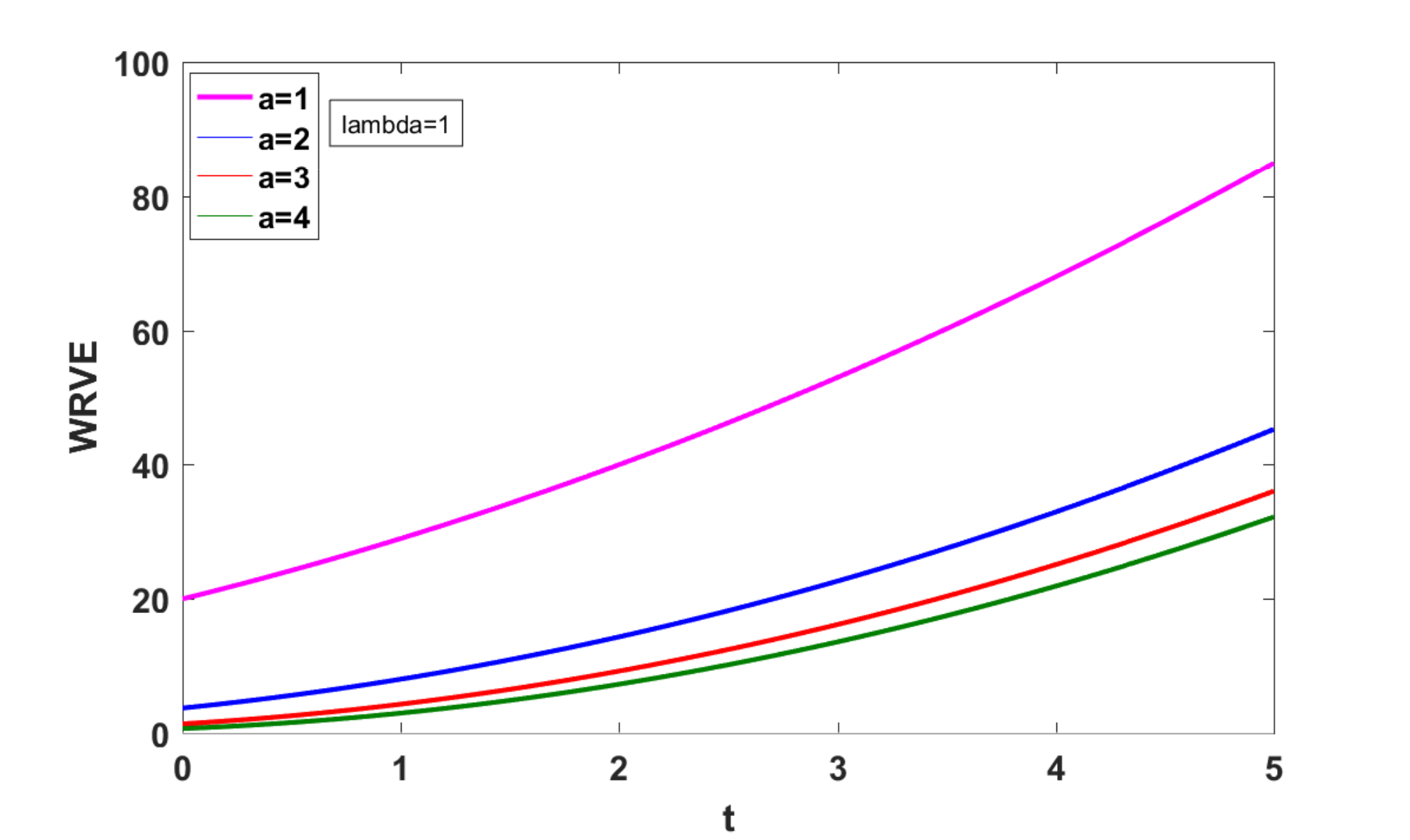}}
		\subfigure[]{\label{c1}\includegraphics[height=1.26in]{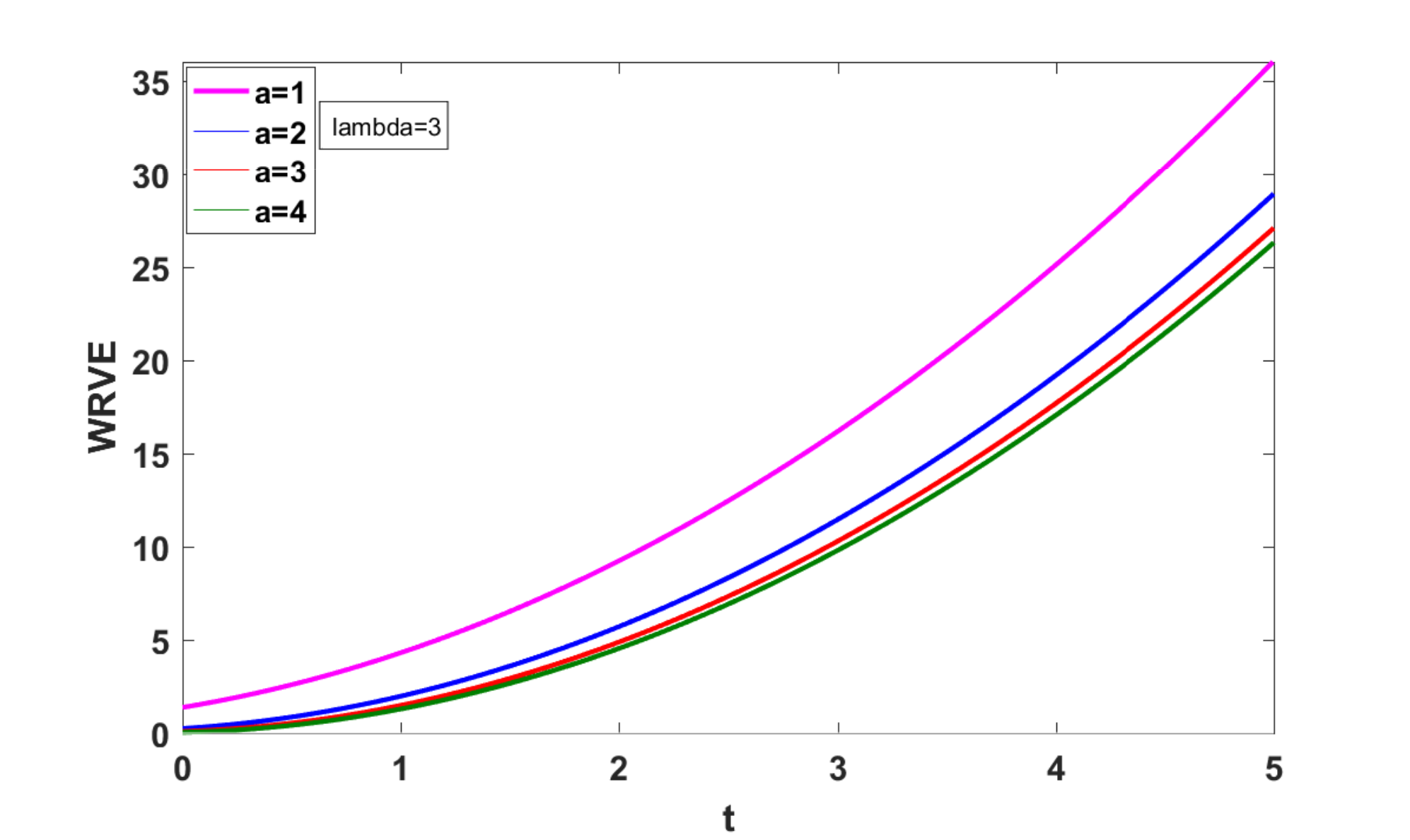}}
		\subfigure[]{\label{c1}\includegraphics[height=1.26in]{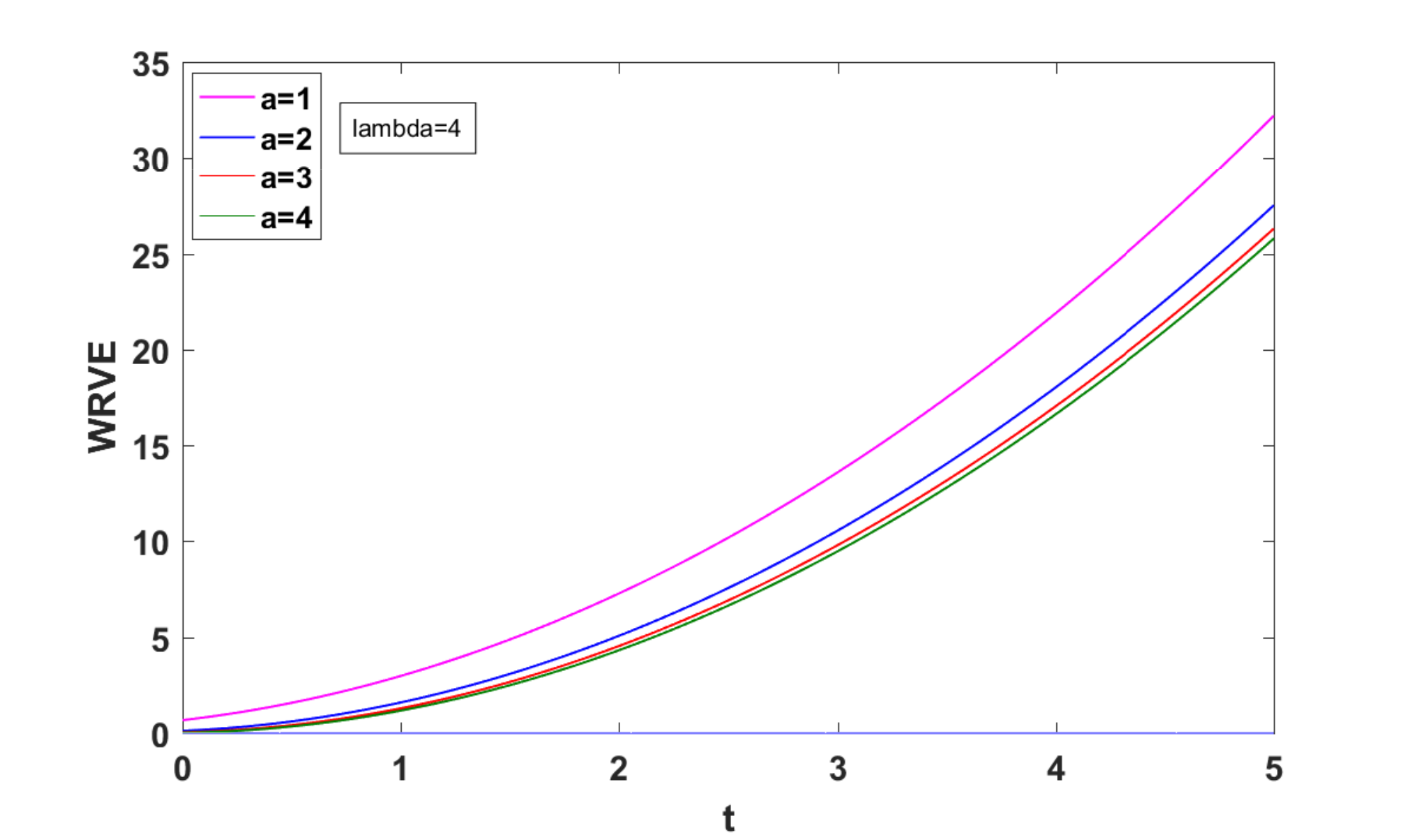}}
		\subfigure[]{\label{c1}\includegraphics[height=1.9in]{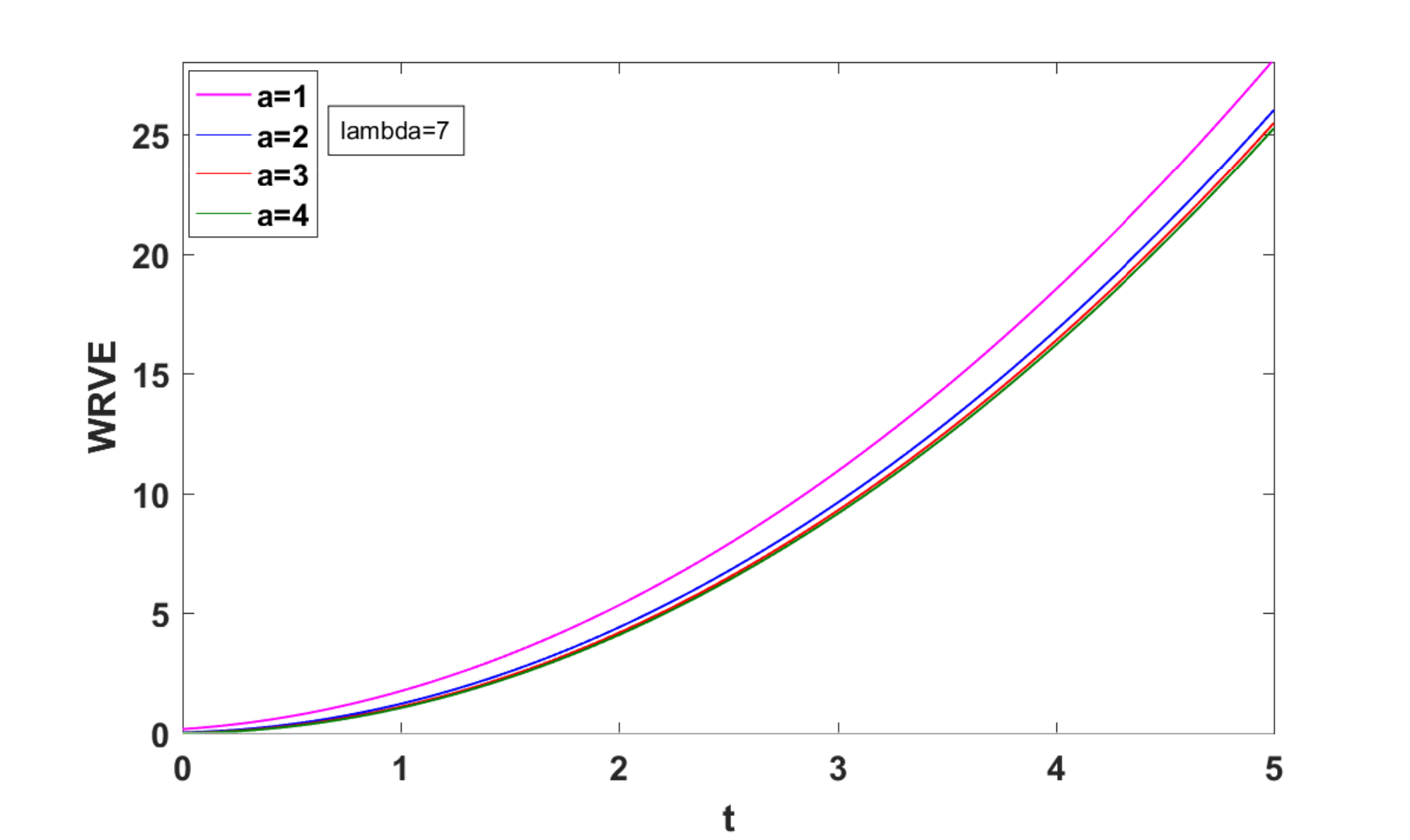}}
		\subfigure[]{\label{c1}\includegraphics[height=1.9in]{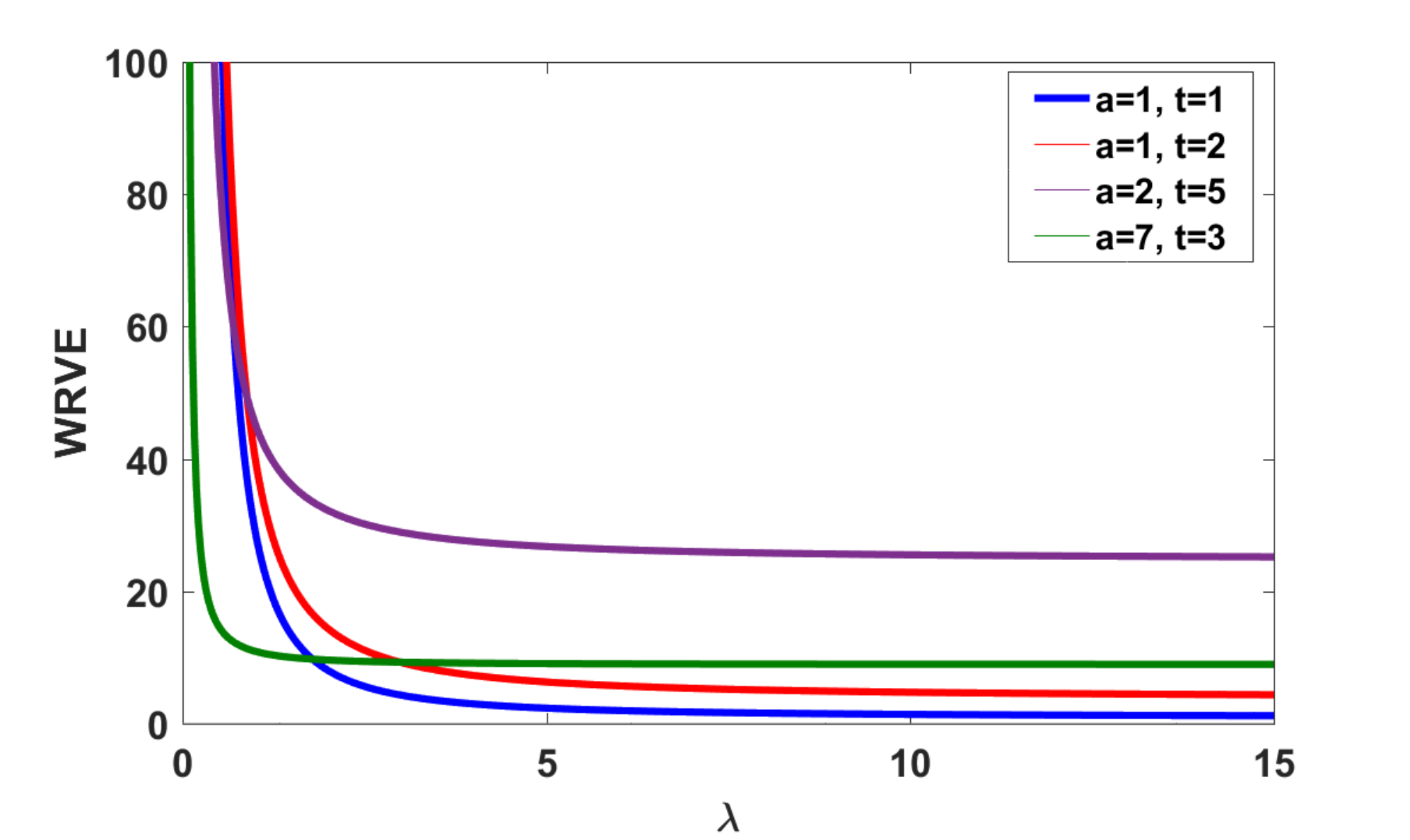}}
		\caption{Graphs of the WRVE with respect to $t$ for the system lifetime as in Example \ref{ex5.1} for $a=1,2,3,4$ when $(a)$  $\lambda=1$,  $(b)$ $\lambda=3$,  $(c)$  $\lambda=4,$ and  $(d)$  $\lambda=7$. $(e)$ Graphs of the WRVE for the system lifetime as in Example \ref{ex5.1} with respect to $\lambda,$ for different combinations of $a$ and $t$.}
	\end{center}
\end{figure}

An alternate expression of $\gamma(y:a,t)$ can be obtained by using the concept of hazard  function and cumulative hazard rate function as
\begin{eqnarray}\label{eq5.5}
\gamma(y:a,t)=\Lambda^{-1}\left(-\frac{1}{a}\log y\right)\log\left\{a y e^{a\Lambda(t)}r\left(\Lambda^{-1}(-\frac{1}{a}\log y)\right)\right\}.
\end{eqnarray}

Let a system have $n~(\ge1)$ components connected in series. The component lifetimes $X_1,\ldots,X_n$ are independent and identically distributed with a common CDF $F(t)=P(X_i\leq t)$. The survival function of the $i$th component is denoted by $\tilde {F}(t) =P(X_i > t)$. The lifetime of the series system is given by $Y = \min\{X_1,\dots,X_n\}$. It can be easily seen that the model of series system satisfies PHR model in (\ref{eq5.1}), when $a=n.$ Now, we provide an example.

\begin{example}\label{ex5.1}
	Consider a series system with component lifetimes following exponential distributions with a common CDF $F(x)=1-e^{-\lambda x}, ~ x>0,~\lambda>0$. From (\ref{eq5.5}), we obtain
	\begin{eqnarray}\label{eq5.9}
	\gamma(y:a,t)&=&-\frac{1}{a\lambda}\big[\big(a\lambda t+\log(a\lambda)\big)\log y+\big(\log y\big)^2\big].
	\end{eqnarray}
	Now, using (\ref{eq5.9}) in (\ref{eq5.4}), and after some calculations,  the closed-form WRVE of the system lifetime $Y$ can be obtained as 
	\begin{eqnarray}\label{eq5.10}
	{\it VE}^x(Y;t)&=&\frac{1}{(a\lambda)^2}\big\{[(a\lambda t+log(a\lambda))^2(a^2\lambda^2t^2+2a\lambda t+2)-2(a\lambda t+log(a\lambda))(a^3\lambda^3t^3\nonumber\\
	&~&+3a^2\lambda^2t^2+6a\lambda t+6)
	+a^4\lambda^4t^4+4a^3\lambda^3t^3+12a^2\lambda^2t^2+24a\lambda t+24]\nonumber\\
	&~&-[(1+a\lambda t)\big(a\lambda t+\log (a\lambda)\big)-(a\lambda t+1)^2-1]^2\big\}.
	\end{eqnarray}
	In Figures $13$($a$-$d$)  the WRVE in (\ref{eq5.10}) is plotted for different choices of $a$ and $\lambda$, from which it is clear that the WRVE is increasing when $t$ is increasing.  Further, the plots of  (\ref{eq5.10}) are provided in Figure $13(e)$ when the parameter $\lambda$ is varying. 
\end{example}

{\section{Non-parametric estimation of WRVE}
	In this section, we propose non-parametric estimation of the WRVE in (\ref{eq3.1}) based on the kernel density estimator of $f(\cdot)$, given by
	\begin{eqnarray}\label{eq7.1}
	\widehat f(x_i)=\frac{1}{nb_n}\sum_{i=1}^{n}K\left(\frac{x-X_i}{b_n}\right), 
	\end{eqnarray} 
	where $K(\cdot)~(\ge0)$ is known as the kernel, satisfying Lipschitz condition with $\int K(x)dx=1$. It is known that the kernel $K(\cdot)$ is symmetric with respect to the origin and $\{b_n\}$, known as bandwidths is a sequence of positive real numbers such that $b_n\rightarrow0$ and $nb_n\rightarrow\infty,$ for $n\rightarrow\infty$. For more details about kernel density estimator, the readers can see the references by \cite{rosenblatt1956remarks} and \cite{parzen1962estimation}. The non-parametric kernel estimator of the WRVE with weight $w(x)=x$ is 
	\begin{eqnarray}\label{eq7.2}
	\widehat{\it{VE}^{x}}(X;t)=\int_{t}^{\infty}\widehat{\delta}(x)(x\log  \widehat{\delta}(x))^2dx-\left[\int_{t}^{\infty}x\widehat{\delta}(x)\log  \widehat{\delta}(x)dx\right]^2;~t\ge0,
	\end{eqnarray}
	where $\widehat{\delta}(x)=\frac{\widehat f(x)}{\widehat {\tilde F}(t)}$ and $ \widehat {\tilde F}(t)=\int_{t}^{\infty}\widehat f(x)dx$. Next, we consider a simulated data set and two real data sets for illustrating the proposed non-parametric estimator in (\ref{eq7.2}).
	\subsubsection*{Simulated data set}
	We have generated a data set from the exponential distribution using Monte Carlo simulation technique. The simulation has been performed in Mathematica software. The true value of the parameter of exponential distribution is considered as $\lambda=5.5.$ For the purpose of estimation, the Gaussian kernel has been employed here, which is given by 
	\begin{eqnarray}
	K(x)=\frac{1}{\sqrt{2\pi}}e^{-\frac{x^2}{2}},
	\end{eqnarray}
	where $x$ belongs to the set of real numbers. The Bias and mean squared error (MSE) of the proposed estimator have been computed for different values of $t$ and $n$, which are presented in Table $1$. From Table $1$, we notice that the MSE of the proposed estimator decreases as the sample size $n$ increases, which guarantees the consistency of the proposed estimator.}  

\begin{table}[h!]
	\caption {The Bias and MSE for the kernel estimator of WRVE based on the simulated data set. Here, ${\it VE}^x(X;t)$ represents the true value of the WRVE.}
	\centering % used for centering table
	\scalebox{.9}{\begin{tabular}{c c c c c c c c } % centered columns (4 columns)
			\hline\hline\vspace{.1cm} %inserts double horizontal lines
			$t$ &~~ $n$ &~~~~Bias &~~~ MSE & ~~${\it VE}^x(X;t)$ \\
			\hline\hline
			
			\multirow{10}{1cm} &~~50 &~~~ -0.17774 &~~~0.05540 & ~ \\[1ex]
			
			~ &~~80 &~~~ -0.18743 &~~~0.05006 & ~  \\[1ex]
			
			~0.1 &~~100 &~~~ -0.17959 &~~~0.04531 &~~0.39985  \\[1ex]
			
			~ &~~150 &~~~ -0.14185 &~~~0.03680 & ~   \\[1ex]
			
			~ &~~200 & ~~~ -0.12904 &~~~0.02887 &  ~  \\[0.5ex]
			\hline
			\multirow{10}{1cm} &~~50 & ~~~ -0.29152 &~~~0.10735 & ~ \\[1ex]
			
			~ &~~80 &~~~ -0.26200 &~~~0.08933 & ~  \\[1ex]
			
			~0.2 &~~100 & ~~~ -0.24604 &~~~0.07807 &~~0.51331  \\[1ex]
			
			~ &~~150 & ~~~ -0.22374 &~~~0.06857 & ~   \\[1ex]
			
			~ &~~200 & ~~~ -0.19449 &~~~0.05819 &  ~  \\[0.5ex]
			\hline	
			\multirow{10}{1cm} &~~50 &~~~ -0.41856 & ~~~0.20092 & ~ \\[1ex]
			
			~ &~~80 & ~~~ -0.36331 & ~~~0.16481 & ~  \\[1ex]
			
			~0.3 &~~100 & ~~~ -0.35657 &~~~0.15335 &~~0.64677  \\[1ex]
			
			~ &~~150 & ~~~ -0.30160 &~~~0.13545 & ~   \\[1ex]
			
			~ &~~200 & ~~~ -0.28645 & ~~~0.11460 &  ~  \\[.5ex]	
			
			\hline\hline	 		
	\end{tabular}} 
	
	\label{tb1} % is used to refer this table in the text
\end{table}
{\section*{Real data sets}
	We consider two data sets related to nano droplet dispersion and Covid-$19$ for the purpose of estimation. The nano droplet dispersion on the flat plate data set has been discussed in \cite{panahi2021adaptive}. This data set is obtained by the molecular dynamics simulation method which is used for studying the equilibrium and transport properties of nano components by Newton’s equations of motion. According to \cite{panahi2021adaptive} and \cite{dutta2022estimation}, Burr type III distribution fits the data set quite well and better than some other well-known distributions such as Weibull and inverse gamma. The data set is provided in Table $2$. The unknown parameters of the Burr type III distribution have been estimated. The estimated values of the shape parameter $\alpha=1.202347$ and scale parameter $\beta=4.701481$. Further, the bias and MSE of the estimator in (\ref{eq7.2}) have been evaluated using $600$ bootstrap samples of size $58$ using  $b_n=0.999$ in Table $3$.}
\begin{table}[h!]
	\caption {The nano droplet dispersion data set.}
	\centering % used for centering table
	\scalebox{0.9}{\begin{tabular}{c c c c c c c c } % centered columns (4 columns)
			\hline %inserts double horizontal lines[0.5Ex]
			0.2289300, ~0.5810291, ~0.6935846,  ~0.7221355,  ~0.7357869,  ~0.7389012,  ~0.7486177,  ~0.7491848,\\[0.5Ex]
			0.7688918,  ~0.7689745,  ~0.7857656,  ~0.7882443,  ~0.7962973,  ~0.7972708,  ~0.8094872,  ~0.8342509,\\[0.5Ex]
			0.8451560,  ~0.8527647,  ~0.8744825,  ~0.8832821,  ~0.8905104,  ~0.8928568,  ~0.9603346, ~0.9624409,\\[0.5Ex]
			0.9677539,  ~0.9792698,  ~0.9926678,  ~1.0297182,  ~1.0890227,  ~1.0972401,  ~1.1235326,  ~1.1559192,\\[0.5Ex]
			1.1755080,  ~1.1764967,  ~1.1836366,  ~1.1975052,  ~1.2171928,  ~1.2456470,  ~1.2475189,  ~1.3245510,\\[0.5Ex]
			1.3485822,  ~1.3796668,  ~1.3932774,  ~1.4432065,  ~1.4697339,  ~1.4974976,  ~1.5356593,  ~1.5375506,\\[0.5Ex]
			1.5426158,  ~1.5430411,  ~1.5679230,  ~1.6287098,  ~1.6744305,  ~1.6838840,  ~1.7235515,  ~1.7685406,\\[0.5Ex]
			1.7980336,  ~1.8073133.\\
			\hline
	\end{tabular}} 	
	\label{tb1} % is used to refer this table in the text
\end{table}	
\begin{table}[h!]
	\caption {The bias, MSE  and  ${\it VE}^x(X;t)$ for the real data set in Table $2$.}
	\centering % used for centering table
	\scalebox{1.4}{\begin{tabular}{c c c c c c c c } % centered columns (4 columns)
			\hline\hline\vspace{.1cm} %inserts double horizontal lines
			~~~$t$ &~~~~Bias &~~~ MSE & ~~${\it VE}^x(X;t)$ \\
			\hline\hline
			
			~0.01 &~~~-0.39998 &~~~0.18895&~~~7.65442  \\[1ex]
			
			~0.05 &~~~-0.39147 &~~~0.18287&~~~7.65442 \\[1ex]
			
			~0.10 &~~~-0.38704 &~~~0.18229&~~~7.65443   \\[1ex]
			
			~0.20 &~~~-0.37023 &~~~0.16924 &~~~7.65503    \\[1ex]
			
			~0.30 &~~~-0.35943 &~~~0.16094 &~~~7.66043   \\[0.5ex]
			~0.40 &~~~-0.34001 &~~~0.14839 &~~~7.68493   \\[0.5ex]
			~0.50 &~~~-0.36229 &~~~0.16217 &~~~7.76122   \\[0.5ex]
			~0.60 &~~-0.49115 &~~~0.27427 &~~~7.94443   \\[0.5ex]
			~0.70 &~~-0.77791 & ~~~0.63932 &~~~8.30439   \\[0.5ex]

			\hline\hline	 		
	\end{tabular}} 
	
\end{table}

In this part of the section, we consider a Covid-$19$ data set of France reported by \cite{alrumayh2023optimal} containing the daily death rate for 24 days collected from October $1$ to October $24$, $2021$. The data set is provided in Table $\ref{tb1}$. Recently, \cite{dutta2023bayesian} studied the goodness-of-fit test of this data set. The authors  mentioned that the data set fits with  the logistic exponential distribution, with shape parameter $\alpha=2.4719$ and scale parameter $\lambda=1.7619$. Similar to the previous data set, here the bias and MSE for the proposed estimator and values of $\it{VE}^x(X;t)$ are presented in Table \ref{tb2}. We assume the value of $b_n=0.45$ for estimation purposes.
\begin{table}[h!]
	\caption {Covid-19 data set of France  containing the daily death rate.}
	\centering % used for centering table
	\scalebox{1}{\begin{tabular}{c c c c c c c c } % centered columns (4 columns)
			\hline
			0.0740, ~0.1190, ~0.1344, ~0.1926, ~0.2232, ~0.3140, ~0.3243, ~0.3393, ~0.3563, ~0.3706,\\
			~0.3843, ~0.4164, ~0.4482, ~0.4578, ~0.4616, ~0.4755, ~0.4917, ~0.5045, ~0.5069, ~0.5325,\\
			~0.5625, ~0.5972, ~0.8057, ~0.8078.\\
			\hline	 		
	\end{tabular}} 
	\label{tb1} 
\end{table}

\begin{table}[h!]
	\caption {The bias, MSE  and  ${\it VE}^x(X;t)$ for the real data set in Table $4$.}
	\centering % used for centering table
	\scalebox{1.4}{\begin{tabular}{c c c c c c c c } % centered columns (4 columns)
			\hline\hline\vspace{.1cm} %inserts double horizontal lines
			~~~$t$ &~~~~Bias &~~~ MSE & ~~$\it{VE}^x(X;t)$ \\
			\hline\hline
			
			% ~0.9 &~~~-0.12418 &~~~0.01695&~~~0.19397  \\[1ex]
			
			~0.01 &~~~0.21895 &~~~0.05103&~~~0.61632 \\[1ex]
			
			~0.05 &~~~0.22029 &~~~0.05171&~~~0.61733  \\[1ex]
			~0.10&~~0.22024 &~~~0.05195 &~~~0.62262   \\[0.5ex]
			
			~0.20 &~~~0.21703 &~~~0.05050 &~~~0.65976   \\[1ex]
			~0.25 &~~~0.18425 &~~~0.03773 &~~~0.69855  \\[1ex]
			~0.30&~~0.12787 &~~~0.02024 &~~~0.75391  \\[0.5ex]
			~0.35&~~0.10462 &~~~0.01523 &~~~0.82615 \\[0.5ex]	
			~0.40 &~~~0.10176 &~~~0.01813 &~~~0.91412   \\[0.5ex]
			~0.45 &~~~0.07619 &~~~0.01650 &~~~1.01581   \\[0.5ex]

			\hline\hline	 		
	\end{tabular}} 
	\label{tb2} 
\end{table}

\section{Conclusions} 
In this paper, we have introduced the concept of WVE and WRVE. Various properties of these two variability measures have been obtained. The effect under affine transformations and some bounds have been obtained. A justification of the usefullness of WRVE over the RVE has been illustrated. Further, the WVE has been studied for the coherent systems. The sufficient conditions, under which the WVE of a system lifetime is smaller/larger than that of its component lifetimes have been derived. The WRVE has been also studied for PHR models. A non-parametric estimator of the WRVE has been proposed, and then the estimator has been illustrated using a simulated data set and two real-life data sets. Finally, we mention here that the proposed measures could be introduced in a more general context omitting the non-negativity assumption. However, its use and interpretation may need to be examined carefully.

\section*{Acknowledgements}  
{Both the authors thank the referees for helpful comments and suggestions which lead to the substantial improvements.} The author Shital Saha thanks the UGC (Award No. $191620139416$), India, for the financial assistantship received to carry out this research work. Both authors thank the research facilities provided by the Department of Mathematics, National Institute of Technology Rourkela, Odisha, India.

\section*{Conflicts of interest} The authors declare no conflict of interest.

\bibliography{refference}
\end{document}